\theoremstyle{plain} \newtheorem{thm}{Theorem}[section]
\newtheorem{cor}[thm]{Corollary} \newtheorem{prop}[thm]{Proposition}
\newtheorem{lemma}[thm]{Lemma} 
\newtheorem{question}[thm]{Question} 
\newtheorem*{namedtheorem}{\theoremname}
\newcommand{\theoremname}{testing}
\theoremstyle{definition} \newtheorem{defn}[thm]{Definition}
\theoremstyle{remark}
    \DeclareMathOperator{\sech}{sech}
\begin{document}

\title{Mutations and short geodesics in hyperbolic $3$-manifolds}

\author{Christian Millichap} 
\address{Department of Mathematics, Linfield College\\ McMinnville, OR 97128}
\email{Cmillich@linfield.edu}

\begin{abstract}
In this paper, we explicitly construct large classes of incommensurable hyperbolic knot complements with the same volume and the same initial (complex) length spectrum. Furthermore, we show that these knot complements are the only knot complements in their respective commensurability classes by analyzing their cusp shapes.

The knot complements in each class differ by a topological cut-and-paste operation known as mutation. Ruberman has shown that mutations of hyperelliptic surfaces inside hyperbolic $3$-manifolds preserve volume.  Here, we provide geometric and topological conditions under which such mutations also preserve the initial (complex) length spectrum.  This work requires us to analyze when least area surfaces could intersect short geodesics in a hyperbolic $3$-manifold. 
\end{abstract}

\maketitle
\section{Introduction}
\label{sec:intro}

The work of Mostow and Prasad implies that every finite volume hyperbolic $3$-manifold admits a unique hyperbolic structure, up to isometry \cite{Pr},\cite{Mos}.  Thus, geometric invariants of a hyperbolic manifold, such as volume and geodesic lengths, are also topological invariants. It is natural to ask: how effective can such invariants be at distinguishing hyperbolic $3$-manifolds?  Furthermore, how do these invariants interact with one another?

In this paper, we will study how mutations along \emph{hyperelliptic surfaces} inside of a hyperbolic $3$-manifold affect such invariants. A hyperelliptic surface $F$ is a surface admitting a \emph{hyperelliptic involution}: an order two automorphism of $F$ which fixes every isotopy class of curves in $F$. A \emph{mutation} along a hyperelliptic surface $F$ in a hyperbolic $3$-manifold $M$ is an operation where we cut $M$ along $F$, and then reglue by a hyperelliptic involution $\mu$ of $F$, often producing a new $3$-manifold, $M^{\mu}$. While a mutation can often change the global topology of a manifold, the action is subtle enough that many geometric, quantum, and classical invariants are preserved under mutation; see \cite{DGST} for details.  In particular, Ruberman showed that mutating hyperbolic $3$-manifolds along incompressible, $\partial$-incompressible surfaces preserves hyperbolicity and volume in \cite{Ru}.

Here, we investigate under which conditions such mutations preserve the smallest $n$ values of the length spectrum, the \emph{initial length spectrum}. The \emph{length spectrum} of a manifold, $M$, is the set of all lengths of closed geodesics in $M$ counted with multiplicites.  We will also consider the \emph{complex length spectrum} of $M$: the set of all complex lengths of closed geodesics in $M$ counted with multiplicities.  Given a closed geodesic $\gamma \subset M$, the \emph{complex length} of $\gamma$ is the complex number $\ell_{\mathbb{C}}(\gamma) = \ell(\gamma) + i \theta$ where $\ell(\gamma)$ denotes the length of $\gamma$ and $\theta$ is the angle of rotation incurred by traveling once around $\gamma$.

Throughout this paper, any surface will be connected, orientable, and of finite complexity, unless stated otherwise. Any hyperbolic $3$-manifold $M$ will have finite volume and be connected, complete, and orientable. Our investigation requires a surface that we mutate along to be a \emph{least area surface in $M$}, or a close variant, to be defined later.

\begin{defn}[Least Area Surface in $M$]
\label{defn:LA}
Let $F \subset M$ be a properly and smoothly embedded surface in a Riemannian $3$-manifold $M$. Then $F$ is called a \emph{least area surface} if $F$ minimizes area in its homotopy class.  
\end{defn}

Least area surfaces inside of $3$-manifolds are well studied objects. Schoen--Yau \cite{ScYa} showed that incompressible surfaces inside closed $3$-manifolds can always be homotoped to smoothly immersed least area surfaces. Freedman--Hass-- Scott \cite{FHS} showed that this resulting immersion is an embedding. Ruberman expanded this analysis to noncompact surfaces in noncompact hyperbolic $3$-manifolds in \cite{Ru}, where he provided conditions for the existence, uniqueness, and embeddedness of least area surfaces in a hyperbolic $3$-manifold.

The following theorem gives three possible properties of a hyperbolic $3$-manifold $M$ that can help determine the topology amd geometry of $\gamma \cap F \subset M$, where $\gamma$ is a closed geodesic and $F$ is an incompressible surface. These properties are the maximal embedded tube radius $r$ of a neighborhood of $\gamma$, denoted $T_{r}(\gamma)$, the length of $\gamma$, denoted $\ell(\gamma)$, and the \emph{normalized length} of a Dehn filling, which we describe in Definition \ref{defn:NL}.

By a closed curve $n \cdot \gamma$, we mean a simple closed curve that is in the homotopy class of $[n \cdot \gamma] \in \pi_{1}(\partial T_{r}(\gamma))$. We can now state this result.

\begin{thm}
\label{thm:main}
Let $M$ be a hyperbolic manifold with $F \subset M$ an embedded surface that is incompressible and $\partial$-incompressible with $\left|\chi(F) \right| \leq 2$. Let $\gamma \subset M$ be a closed geodesic with embedded tubular radius $r$. Assume
\begin{enumerate}
\item $r > 2 \ln(1 + \sqrt{2}) $, or  
\item $\ell(\gamma) < 0.015$, or
\item $\gamma$ is the core of a solid torus added by Dehn filling $N \cong M \setminus \gamma$ along a slope of normalized length $\widehat{L} \geq 14.90$.
\end{enumerate}

Then $\gamma$ can be isotoped disjoint from $F$. Furthermore, if $F$ is embedded in least area form, then either $\gamma \cap F = \emptyset$ without any isotopy or $n \cdot \gamma$ is isotopic into $F$ for some $n \in \mathbb{N}$.

\end{thm}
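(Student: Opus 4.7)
The plan is to reduce each of hypotheses (2) and (3) to hypothesis (1), and then run a single Gauss--Bonnet / area-comparison argument against a least area representative of $F$. For (2), I would invoke a Meyerhoff-type lower bound on the embedded tube radius of a short geodesic (sharpened as in work of Gabai--Meyerhoff--Milley); the threshold $\ell(\gamma) < 0.015$ is calibrated to force $r > 2\ln(1 + \sqrt{2})$. For (3), I would appeal to the explicit Hodgson--Kerckhoff Dehn filling estimates (as refined, for instance, by Futer--Purcell--Schleimer or Futer--Kalfagianni--Purcell), which give a lower bound on the core tube radius after filling in terms of $\widehat{L}$; the constant $14.90$ is chosen precisely so that $r > 2\ln(1 + \sqrt{2})$.

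Now assume $r > 2\ln(1 + \sqrt{2})$. By Ruberman's existence theorem cited in the excerpt, $F$ is homotopic to a properly embedded least area surface $F_0$; the ``furthermore'' clause concerns the case $F_0 = F$. Suppose toward a contradiction that $\gamma \cap F_0 \neq \emptyset$, and make $F_0$ transverse to $\partial T_r(\gamma)$. Since $T_r(\gamma)$ is a solid torus with core $\gamma$ and $F_0$ is incompressible, at least one component of $F_0 \cap T_r(\gamma)$ must be a disk $D$ meeting $\gamma$ transversely. Lifting $D$ to the universal cover $\mathbb{H}^3$ and applying standard monotonicity for minimal surfaces (comparing with the totally geodesic meridian disk through the lifted axis) yields
\[
  \operatorname{Area}(D) \;\geq\; 2\pi\bigl(\cosh(r) - 1\bigr),
\]
while the Gauss equation ($K_{F_0} \leq -1$) combined with Gauss--Bonnet for the least area surface gives
\[
  \operatorname{Area}(F_0) \;\leq\; 2\pi\lvert \chi(F_0)\rvert \;\leq\; 4\pi.
\]
Since $\cosh\bigl(2\ln(1 + \sqrt{2})\bigr) = 3$, the hypothesis $r > 2\ln(1 + \sqrt{2})$ gives $\operatorname{Area}(D) > 4\pi \geq \operatorname{Area}(F_0)$, a contradiction. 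Hence $\gamma \cap F_0 = \emptyset$.

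With $\gamma \cap F_0 = \emptyset$, every nontrivial component of $F_0 \cap T_r(\gamma)$ is an incompressible, $\partial$-incompressible annulus in the solid torus $T_r(\gamma)$ disjoint from its core. Such annuli are classified as $\partial$-parallel, and the slope of the boundary on $\partial T_r(\gamma)$ determines an integer $n \in \mathbb{N}$ such that the core of the annulus is isotopic (inside $T_r(\gamma)$) to the simple closed curve $n \cdot \gamma$ defined in the excerpt. This yields the stated dichotomy: either $F_0 \cap T_r(\gamma) = \emptyset$, in which case $\gamma \cap F_0 = \emptyset$ without any further isotopy, or some component is an annulus whose core is isotopic to $n \cdot \gamma$, so that $n \cdot \gamma$ is isotopic into $F_0$. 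In either case $\gamma$ may be perturbed radially inside $T_r(\gamma)$ to lie off $F_0$, and carrying this isotopy back along Ruberman's homotopy from $F_0$ to $F$ displaces $\gamma$ from $F$ as well. The main obstacle is the sharp minimal-disk estimate $\operatorname{Area}(D) \geq 2\pi(\cosh r - 1)$: proving it requires a careful monotonicity/comparison argument in $\mathbb{H}^3$ exploiting the convexity of the tube wall and the hyperbolic second variation of area, and pinning down the explicit constants in (2) and (3) so as to actually cross the threshold $\cosh r > 3$ is the other delicate point.
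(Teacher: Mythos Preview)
Your overall strategy matches the paper's: reduce (2) and (3) to (1) via Meyerhoff's collar lemma and the Hodgson--Kerckhoff cone-deformation estimates, then run an area argument against a least area representative. Your disk bound $2\pi(\cosh r - 1)$ is the same as the paper's $4\pi\sinh^2(r/2)$, and the numerics line up. However, there is a real gap in how you handle the intersection $F_0 \cap T_r(\gamma)$.

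You assert that if $\gamma \cap F_0 \neq \emptyset$ then ``at least one component of $F_0 \cap T_r(\gamma)$ must be a disk $D$ meeting $\gamma$ transversely.'' This does not follow from incompressibility of $F_0$, and the paper treats precisely the opposite situation as a genuine case. An incompressible annulus $A_r \subset T_r(\gamma)$ is $\partial$-parallel only up to isotopy; in its least area position it may well cross the core $\gamma$. In that case there is no area contradiction available (the paper observes that the lower bound one gets for an annulus is of order $\ell(\gamma)\sinh r$, which can be arbitrarily small), and the correct conclusion is the second alternative of the dichotomy: $n\cdot\gamma$ is isotopic into $F$, and $\gamma$ can be \emph{isotoped} disjoint from $F$ but need not already be disjoint. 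Your argument structure ``assume $\gamma\cap F_0\neq\emptyset$ $\Rightarrow$ disk $\Rightarrow$ contradiction $\Rightarrow$ $\gamma\cap F_0=\emptyset$'' therefore overshoots the theorem and is not valid as written.

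A secondary omission: you never argue that the components of $F_0\cap T_r(\gamma)$ are only disks and annuli. This requires showing $F_0\cap T_r(\gamma)$ is incompressible \emph{in} $T_r(\gamma)$, which is not automatic from incompressibility of $F_0$ in $M$. The paper proves it by a nearest-point projection onto $\partial T_r(\tilde\gamma)$ in $\mathbb{H}^3$, checking via the cylindrical metric $d\rho^2+\sinh^2\!\rho\,d\theta^2+\cosh^2\!\rho\,dh^2$ that this projection is area-decreasing, and then invoking the least area property of $F_0$ to rule out any disk of $F_0$ lying outside the tube with boundary on the tube wall. Without this step you do not know the components are disks or annuli at all.
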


A few remarks about this theorem:  
\begin{itemize}
\item This theorem is both a topological and a geometrical statement about $\gamma \cap F$. Only the topological statement is necessary for showing that mutation preserves the initial length spectrum; see Theorem \ref{cor:syspreserved}.
\item This theorem is stated in full generality in Theorem \ref{thm:gammasep} where no constraints are made on the Euler characteristic. We mainly care about $\left|\chi(F) \right| \leq 2$ because the surfaces we will consider in our main result (Theorem \ref{cor:systole_and_vol}) are all\textit{ Conway spheres}, i.e, $4$-punctured spheres inside of knot complements.  
\item Theorem \ref{thm:gammasep} is stated in terms of \textit{almost least area surfaces}, which generalize least area surfaces; see Definition \ref{def:ALAS}. 
\item $(2)$ implies $(1)$ by the work of Meyerhoff stated in Theorem \ref{thm:Collar_lemma}. $(3)$ implies $(1)$ by the work of Hodgson and Kerckhoff \cite{HoKe}, \cite{HoKe2} on cone deformations. Furthermore, a version of $(3)$ implies $(2)$ exists, but we must adjust the lower bound on normalized length to be $\widehat{L} \geq 20.76$.
\item $(3)$ can be stated in terms of Dehn filling multiple curves; see Corollary \ref{cor:disjointgeo}.
\end{itemize}

The proof of Theorem \ref{thm:main} relies on both the topology and geometry of $F \cap T_{r}(\gamma)$, where $T_{r}(\gamma)$ is the embedded tubular neighborhood of radius $r$ around $\gamma$.  Since $F$ is incompressible and $\partial$-incompressible, $F$ can be isotoped into almost least area form by Theorem \ref{thm:LAsurfaces}. As a result, components of $F \cap T_{r}(\gamma)$ must be disks or annuli.  If a component of $F \cap T_{r}(\gamma)$ that intersects $\gamma$ is a disk, $D_{r}$, then we work to get an area contradiction.  Specifically, if $r$ is sufficiently large, then the area of $D_{r}$ inside of this neighborhood will be too big, and so, $\gamma$ must be disjoint from $F$ in this case.  As mentioned in the remarks, conditions $(2)$ and $(3)$ each imply $(1)$, so all of our cases rely on a sufficiently large tube radius in the end.  If a component of  $F \cap T_{r}(\gamma)$ that intersects $\gamma$ is an annulus, $A_{r}$, then this annulus must be parallel to the boundary torus $\partial T_{r}(\gamma)$.  Here, $\gamma$ can be isotoped disjoint from $A_{r}$, and more generally, isotoped disjoint from $F$.    

The following theorem tells us when the initial length spectrum is preserved under mutation. 

\begin{thm}
\label{cor:syspreserved}
Let $F \subset M$ be a properly embedded surface that is incompressible, $\partial$-incompressible, and admits a hyperelliptic involution $\mu$. Suppose that $M$ has $n$ geodesics shorter than some constant $L <0.015$. Then $M$ and $M^{\mu}$ have (at least) the same $n$ initial values of their respective (complex) length spectra.   
\end{thm}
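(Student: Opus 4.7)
The strategy is to combine Theorem~\ref{thm:main}(2) with Ruberman's geometric construction of the hyperbolic structure on $M^{\mu}$. Because $F$ is incompressible and $\partial$-incompressible, first isotope $F$ into almost least area form via Theorem~\ref{thm:LAsurfaces}. Ruberman then shows that the hyperelliptic involution $\mu$ can be realized as an isometry of the induced metric on this representative of $F$; cutting $M$ along $F$ and regluing via $\mu$ therefore produces a hyperbolic structure on $M^{\mu}$ in which $M \setminus F$ and $M^{\mu} \setminus F$ are isometric via the canonical identification. Consequently, any closed geodesic of $M$ lying in $M \setminus F$ transfers to a closed geodesic of $M^{\mu}$ of the same complex length: the length is computed locally along the geodesic, and the rotation angle of its holonomy is determined by the metric in a tubular neighborhood of the geodesic, both of which are unchanged under the identification.

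Now let $\gamma_1,\dots,\gamma_n$ be the closed geodesics of $M$ with $\ell(\gamma_i) \le L < 0.015$. For each $\gamma_i$, condition~(2) of Theorem~\ref{thm:main} is satisfied, and since $F$ has been placed in (almost) least area form we obtain the stronger dichotomy: either $\gamma_i \cap F = \emptyset$ on the nose, or some $n_i \cdot \gamma_i$ is isotopic into $F$. In the first case, the transfer argument of the previous paragraph applies directly and produces a closed geodesic in $M^{\mu}$ of complex length $\ell_{\mathbb{C}}(\gamma_i)$. In the second case, the simple closed curve on $F$ representing $[n_i \cdot \gamma_i]$ is fixed by $\mu$ up to isotopy in $F$, so mutation preserves the free homotopy class of $\gamma_i$ in $\pi_1(M^{\mu})$; combined with the first conclusion of Theorem~\ref{thm:main} (which provides a representative of $[\gamma_i]$ disjoint from $F$), this identifies $[\gamma_i]$ with a closed geodesic of $M^{\mu}$ of the same complex length.

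Running this over all $i$ produces $n$ closed geodesics of $M^{\mu}$ whose complex lengths match those of $\gamma_1,\dots,\gamma_n$. The same reasoning with the roles of $M$ and $M^{\mu}$ interchanged (using $(M^{\mu})^{\mu} \cong M$) shows $M^{\mu}$ admits no additional geodesics of length $\le L$ outside this correspondence, so the first $n$ entries of the two complex length spectra agree. The main obstacle I anticipate is the second subcase of the dichotomy: rigorously verifying that when $n_i \cdot \gamma_i$ is isotopic into $F$, the unique geodesic representative of $[\gamma_i]$ in $M^{\mu}$ realizes complex length exactly $\ell_{\mathbb{C}}(\gamma_i)$ rather than some a priori different value introduced by the mutation acting near $F$.
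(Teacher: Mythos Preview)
Your overall strategy is correct and matches the paper's: use Theorem~\ref{thm:main}(2) to push each short geodesic off $F$, transfer it to $M^{\mu}$, and then argue symmetrically to get a bijection. The self-identified obstacle, however, is a genuine gap, and the paper resolves it by a different mechanism than the geometric one you attempt.

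You split into two cases according to the dichotomy in the ``furthermore'' clause of Theorem~\ref{thm:main}. This is unnecessary and is what creates the difficulty. The paper uses \emph{only} the topological statement: each $\gamma_i$ can be isotoped (not as a geodesic, just as a loop) disjoint from $F$, so $[\gamma_i]$ lies in $\pi_1(M_a)$ or $\pi_1(M_b)$. The point is then purely representation-theoretic. By Ruberman's Theorem~\ref{thm:mutationrep} there is $\beta \in \text{PSL}(2,\mathbb{C})$ with $\rho_F \mu_* = \beta \rho_F \beta^{-1}$, and Lemma~\ref{lemma:Fgroups} (via Maskit combination) gives $\Gamma^{\mu} = \langle \Gamma_a, \beta \Gamma_b \beta^{-1}\rangle$. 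Thus the discrete faithful representations $\rho$ and $\rho_\mu$ agree on $\pi_1(M_a)$ and differ by conjugation on $\pi_1(M_b)$. Since complex length is read off from the trace via $\cosh(\ell_{\mathbb C}(\gamma)/2) = \pm \tfrac{1}{2}\operatorname{tr}\rho([\gamma])$, and trace is conjugation-invariant, the complex length of $[\gamma_i]$ is the same in both manifolds---regardless of whether the \emph{geodesic} representative happened to meet $F$.

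Your geometric argument (``the metric in a tubular neighborhood is unchanged'') is valid precisely when the geodesic itself misses $F$, but in your second case only a non-geodesic representative is disjoint from $F$, and you have no direct geometric control over the geodesic representative in $M^{\mu}$. Invoking that $\mu$ fixes isotopy classes of curves in $F$ does not by itself pin down the complex length. The trace argument handles both cases uniformly and is what you should substitute for your case analysis.
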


Under these hypotheses, any sufficiently short geodesic $\gamma$ in $M$ can be isotoped disjoint from $F$.  After this isotopy, if we mutate $M$ along $(F, \mu)$ to obtain $M^{\mu}$, then there will also be a closed curve in $M^{\mu}$ corresponding with $\gamma$.  We just need to analyze the representations $\rho: \pi_{1}(M) \rightarrow \text{PSL}(2, \mathbb{C})$ and $\rho_{\mu}: \pi_{1}(M^{\mu}) \rightarrow \text{PSL}(2, \mathbb{C})$ to see that $[\gamma]$, as an element of either $\pi_{1}(M)$ or $\pi_{1}(M^{\mu})$, has the same representation (up to conjugacy) in $\text{PSL}(2, \mathbb{C})$, and so, the same (complex) length associated to it in either case. Note that Theorem \ref{cor:syspreserved} only relies on the topological statement from Theorem \ref{thm:main}. In fact, any $\gamma$ that can be homotoped disjoint from $F$ will be preserved under mutation since we only need to consider $\gamma$ as a representative of an element of $\pi_{1}(M)$ or $\pi_{1}(M^{\mu})$; this follows from Theorem \ref{thm:mutationrep} and Lemma \ref{lemma:Fgroups}.   

This theorem gives us a tool to produce non-isometric hyperbolic $3$-manifolds that have at least the same initial length spectrum. Over the past $35$ years, there have been a number of constructions for producing non-isometric hyperbolic $3$-manifolds that are \textit{iso-length spectral}, i.e., have the same length spectrum. Vign\'{e}ras in \cite{Vi} used arithmetic techniques to produce the first known constructions of such manifolds.  Sunada developed a general method for constructing iso-length spectral manifolds \cite{Su}, which helped him produce many iso-length spectral, non-isometric Riemann surfaces.  This technique produces covers of a manifold $M$ that are iso-length spectral by finding certain group theoretic conditions on subgroups of $\pi_{1}(M)$. We will refer to any such group theoretic construction for producing covers that have either the same length spectrum or some variation of this as a \textit{Sunada-type construction}. 

Since Sunada's original work, many Sunada-type constructions have been developed.  These constructions often have very interesting relations to volume. McReynolds uses a Sunada-type construction in \cite{McR} to build arbitrarily large sets of closed, iso-length spectral, non-isometric hyperbolic manifolds.  Furthermore, the growth of size of these sets of manifolds as a function of volume is super-polynomial.  In contrast, Leininger--McReynolds--Neumann--Reid in \cite{LMNR}  also use a Sunada-type construction to show that for any closed hyperbolic $3$-manifold $M$, there exists infinitely many covers $\left\{M_{j}, N_{j}\right\}$ of $M$, such that the length sets of these pairs are equal but $\frac{vol(M_{j})}{vol(N_{J})} \rightarrow \infty$.  Here, the \textit{length set} of a manifold is the set of all lengths of closed geodesics counted without multiplicities.  Thus, volume can behave drastically differently for hyperbolic $3$-manifolds that are iso-length spectral as compared with hyperbolic $3$-manifolds with the same length set.  

All of the constructions mentioned above produce \textit{commensurable manifolds}, that is, manifolds that share a common finite-sheeted cover. Sunada type constructions will always produce commensurable manifolds since they involve taking covers of a common manifold and commensurability is an equivalence relation. On the other hand, the work of Reid \cite{Re} and Chinburg--Hamilton--Long--Reid \cite{ChHaLoRe} shows that iso-length spectral, non-isometric \underline{arithmetic} hyperbolic $3$-manifolds are \textit{always} commensurable.  To date, all known examples of iso-length spectral, non-isometric hyperbolic $3$-manifolds are commensurable. This raises the following question:

\begin{question}
\label{q:spectral}
Do there exist incommensurable iso-length spectral hyperbolic $3$-manifolds?
\end{question}

Here, we construct large families of mutant pretzel knot complements which have the same initial (complex) length spectrum, the same volume, and are pairwise incommensurable. Our construction does not use arithmetic methods or a Sunada-type construction, but rather, the simple cut and paste operation of mutating along Conway spheres. This work is highlighted in our main theorem below. See Section \ref{sec:RT_and_PK} for the definition of a pretzel knot.

\begin{thm}
\label{cor:systole_and_vol}
For each $n \in \mathbb{N}$, $n \geq 2$, there exist $\frac{(2n)!}{2}$ non-isometric hyperbolic pretzel knot complements that differ by mutation, $\left\{M_{2n+1}^{\sigma}\right\}$, such that these manifolds:

\begin{itemize}
\item have the same $2n+1$ shortest geodesic (complex) lengths, 
\item are pairwise incommensurable,
\item have the same volume, and
\item $\left(\frac{2n-1}{2}\right)v_{\mathrm{oct}} \leq vol(M^{\sigma}_{2n+1})  \leq \left(4n+2\right)v_{\mathrm{oct}}$, where $v_{\mathrm{oct}} \left(\approx 3.6638\right)$ is the volume of a regular ideal octahedron.  

\end{itemize}
 
\end{thm}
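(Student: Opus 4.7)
The plan is to construct the family $\left\{M_{2n+1}^{\sigma}\right\}$ as mutants of a single hyperbolic pretzel knot complement $M_{2n+1} = S^{3} \setminus P(a_{1},\ldots,a_{2n+1})$, where the $a_{i}$ are distinct integers with $|a_{i}|$ sufficiently large. The standard diagram of $P(a_{1},\ldots,a_{2n+1})$ contains disjoint Conway spheres separating pairs of tangles from the rest, each admitting a hyperelliptic $\pi$-rotation. A composition of mutations along these spheres realizes every permutation $\sigma$ of the tangle parameters; two permutations give the same knot exactly when they differ by the dihedral symmetry of the cyclic $(2n+1)$-tangle arrangement, accounting for the count $(2n+1)!/(2(2n+1)) = (2n)!/2$.

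Next I would verify the hypotheses of Theorem \ref{cor:syspreserved}. Incompressibility and $\partial$-incompressibility of the Conway spheres for $|a_{i}| \geq 2$ is a standard fact for pretzel knots. For the required $2n+1$ short geodesics, I view $M_{2n+1}$ as a Dehn filling of the fully augmented pretzel link complement $N$, a $(2n+2)$-cusped hyperbolic manifold with an explicit ideal right-angled polyhedral structure. Choosing $|a_{i}|$ sufficiently large forces each filling slope to have large normalized length, and Theorem \ref{thm:main}(3) (via Hodgson--Kerckhoff) then shows that each filled-cusp core is a closed geodesic of length less than $0.015$. This produces $2n+1$ geodesics of length at most $L<0.015$, so Theorem \ref{cor:syspreserved} applies; together with Ruberman's theorem this yields the claimed equality of the first $2n+1$ complex lengths and of volumes across the family.

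The main obstacle is pairwise incommensurability. The strategy, previewed by the abstract, is to analyze cusp shapes. Reid's theorem guarantees non-arithmeticity of all our complements (the figure-eight knot being the only arithmetic knot complement), so their cusp shapes are commensurability invariants up to the action of $\mathrm{GL}_{2}(\mathbb{Z})$. I would compute the cusp shape of $M_{2n+1}^{\sigma}$ as a small, explicit perturbation of the cusp shape of the knot cusp of $N$ under the change of peripheral basis induced by the slopes $a_{\sigma(1)},\ldots,a_{\sigma(2n+1)}$. The ordered tangle data can then be recovered, modulo the dihedral symmetry already quotiented out, from the resulting cusp shape, so distinct $\sigma$ land in distinct $\mathrm{GL}_{2}(\mathbb{Z})$-orbits and the corresponding complements are pairwise incommensurable. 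Making this perturbative recovery sufficiently effective is the most delicate step.

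Finally, the volume bounds follow from standard tools. The lower bound $\tfrac{2n-1}{2}\,v_{\mathrm{oct}}$ comes from the Agol--Storm--Thurston guts inequality applied to the $2n-1$ disjoint essential Conway spheres in $M_{2n+1}^{\sigma}$. The upper bound $(4n+2)\,v_{\mathrm{oct}}$ comes from noting that $M_{2n+1}^{\sigma}$ is obtained by Dehn filling the fully augmented pretzel link complement $N$, whose volume is at most $(4n+2)\,v_{\mathrm{oct}}$ by its ideal right-angled polyhedral decomposition, combined with the fact that volume strictly decreases under Dehn filling.
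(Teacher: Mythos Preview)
Your construction, the counting via Bonahon's classification, the use of Dehn fillings of the untwisted augmented link to produce $2n+1$ short core geodesics, the application of Theorem~\ref{cor:syspreserved}, and the volume bounds are all essentially the paper's argument (the volume bounds are quoted from \cite{Mi}, but your outline is consistent with that).

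The incommensurability step, however, has a genuine gap. Your assertion that for non-arithmetic one-cusped manifolds the cusp shape is a commensurability invariant up to $\mathrm{GL}_{2}(\mathbb{Z})$ is false: a finite cover that is nontrivial on the cusp already changes the cusp parameter by an element of $\mathrm{GL}_{2}(\mathbb{Q})\setminus\mathrm{GL}_{2}(\mathbb{Z})$. What is invariant is the cusp \emph{field} $\mathbb{Q}(\tau)$, not the $\mathrm{GL}_{2}(\mathbb{Z})$-orbit of $\tau$. So even a successful perturbative computation distinguishing the $\tau$'s up to $\mathrm{GL}_{2}(\mathbb{Z})$ would not yield incommensurability, and showing that the cusp fields themselves differ across all $\sigma$ is a much harder statement that a small-perturbation argument will not deliver.

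The paper takes a completely different route and in fact proves the stronger Theorem~\ref{thm:incom}: each $M_{2n+1}^{\sigma}$ is the \emph{unique} knot complement in its commensurability class. This uses the Reid--Walsh criterion (Theorem~\ref{thm:comm_knots}): one must rule out lens space surgeries, nontrivial symmetries other than a strong inversion, and hidden symmetries. The first two are handled by existing classifications (Ichihara--Jong, Boileau--Zimmermann). The substantial work is Proposition~\ref{prop:no_rigid}, which rules out hidden symmetries by showing, via a detailed analysis of the maximal horoball packing coming from the circle packing of $N_{2n+1}$, that the knot cusp admits no order-$3$ or order-$4$ rotational symmetry and hence cannot cover a rigid Euclidean orbifold; a geometric-limit argument then transfers this to $M_{2n+1}^{\sigma}$ for $q_i$ sufficiently large. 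Once uniqueness in the commensurability class is established, pairwise incommensurability of non-isometric mutants is immediate.
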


Theorem \ref{cor:systole_and_vol} provides an answer to a weak form of Question \ref{q:spectral}. While these mutant pretzel knot complements have the same initial length spectrum, we doubt that any of them are actually iso-length spectral. Almost all sufficiently long geodesics in one of these pretzel knot complements have homotopically essential intersections with all of the Conway spheres. Thus, their corresponding geodesic lengths should be changed by mutation.

The fact that these hyperbolic pretzel knot complements are pairwise incommensurable comes from the following theorem. See Section \ref{sec:commensurablity} for full details. 

\begin{thm}
\label{thm:incom}
Let $n \geq 2$ and let $q_{1}, \ldots, q_{2n+1}$ be integers such that only $q_{1}$ is even, $q_{i} \neq q_{j}$ for $i \neq j$, and all $q_{i}$ are sufficiently large. Then the complement of the hyperbolic pretzel knot  $K \left( \frac{1}{q_{1}}, \frac{1}{q_{2}}, \ldots, \frac{1}{q_{2n+1}} \right)$  is the only knot complement in its commensurability class.  In particular, any two of these hyperbolic pretzel knot complements are incommensurable.
\end{thm}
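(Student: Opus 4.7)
The strategy is to show that each complement $S^3 \setminus K(1/q_1,\ldots,1/q_{2n+1})$ is the unique knot complement in its commensurability class, so that two such complements with distinct parameter vectors are automatically incommensurable. First I would realize $S^3 \setminus K(1/q_1,\ldots,1/q_{2n+1})$ as the $(1/q_i)$--Dehn filling on a cusped hyperbolic parent manifold $N_{2n+1}$, obtained by adjoining an unknotted meridian circle around each twist region of the pretzel diagram. For all $q_i$ sufficiently large, Thurston's hyperbolic Dehn surgery theorem together with the quantitative cone-deformation estimates of Hodgson--Kerckhoff \cite{HoKe,HoKe2} give the filled manifold as a controlled perturbation of $N_{2n+1}$ and, in particular, produce an explicit asymptotic formula for the cusp shape $\tau = \tau(q_1,\ldots,q_{2n+1})$ of the knot cusp in terms of the $1/q_i$.

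With $\tau$ under control, Step 2 is to check that the cusp field $\mathbb{Q}(\tau)$ is neither $\mathbb{Q}(i)$ nor $\mathbb{Q}(\sqrt{-3})$; by Neumann--Reid, this rules out hidden symmetries on the one-cusped manifold $S^3 \setminus K$, since a hidden symmetry would force the cusp torus to admit a $4$-fold or $6$-fold Euclidean rotational symmetry, and hence $\tau$ to lie in one of these two imaginary quadratic fields. Step 3 then invokes the commensurability dichotomy of Reid--Walsh and Boileau--Boyer--Cebanu--Walsh: absent hidden symmetries, $S^3 \setminus K$ has only a uniformly bounded number of commensurable knot complements, each arising as the quotient by an orientation-preserving symmetry of $S^3 \setminus K$ that acts freely on the cusp. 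For pairwise distinct $q_i$ with only $q_1$ even, the only nontrivial symmetries of the pretzel complement are the strong involutions through the Conway spheres (compare Section \ref{sec:RT_and_PK}); each fixes $K$ setwise and so acts with fixed points on the cusp, producing an orbifold quotient rather than a knot complement. Hence $S^3 \setminus K$ is alone in its commensurability class.

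The main obstacle is the algebraic half of Step 2: $\tau$ is transcendentally determined by the hyperbolic structure, so one cannot read off its minimal polynomial from a numerical approximation. I would handle this by arguing that the dominant term of the asymptotic expansion of $\tau$ in the variables $1/q_i$ is an algebraic function of the (fixed) geometry of $N_{2n+1}$ whose value lies outside both $\mathbb{Q}(i)$ and $\mathbb{Q}(\sqrt{-3})$; since the higher-order corrections are $O(1/q_i^{2})$ by the Hodgson--Kerckhoff estimates, this leading term dictates the cusp field for all sufficiently large $q_i$, and the genericity condition that the $q_i$ be distinct ensures the leading term never accidentally cancels into one of the two exceptional fields.
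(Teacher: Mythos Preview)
Your overall architecture matches the paper's: realize the complement as a filling of the untwisted augmented link complement $N_{2n+1}$, then apply the Reid--Walsh criterion by ruling out hidden symmetries and controlling the ordinary symmetry group, using geometric convergence as $q_i \to \infty$. The gap is in Step~2.

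The problem is the inference that ``the leading term dictates the cusp field for all sufficiently large $q_i$.'' The Neumann--Reid criterion you invoke says that the cusp torus covers a rigid Euclidean $2$-orbifold if and only if the cusp parameter $\tau$ lies in $\mathbb{Q}(i)$ or $\mathbb{Q}(\sqrt{-3})$; but both of these fields are \emph{dense} in $\mathbb{C}$. Hence knowing $\tau(q_1,\ldots,q_{2n+1}) = \tau_0 + O(1/q_i^{2})$ with $\tau_0 \notin \mathbb{Q}(i) \cup \mathbb{Q}(\sqrt{-3})$ tells you nothing about whether $\tau$ itself lands in one of these fields. No genericity hypothesis on the $q_i$ repairs this: the bad set in parameter space is not discrete, and your asymptotic expansion cannot see it. The cusp-field formulation of the hidden-symmetry criterion is simply not stable under geometric limits.

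The paper avoids this by using strictly more data than $\tau$: it analyzes the full pattern of maximal horoballs in $\mathbb{H}^{3}$ as seen from the cusp at $\infty$. If $M$ covered an orbifold with rigid cusp, the commensurator would contain an order-$3$ or order-$4$ elliptic fixing $\infty$, and that rotation would have to preserve the horoball packing. Via the circle-packing description of the polyhedral decomposition of $N_{2n+1}$ (Proposition~\ref{prop:Knotcusp} and Lemma~\ref{lemma:rectanglesize}), the paper shows that the maximal horoballs of the knot cusp lie in parallel strings with anisotropic nearest-neighbor spacing ($\ell(w)$ in one direction, $2\ell(s)=2$ in the other, $1<\ell(w)<2$), so no such rotation exists. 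Because ``admits an order-$k$ rotational symmetry'' is a \emph{closed} condition on horoball packings, its negation \emph{is} stable under geometric convergence, and the conclusion passes from $N_{2n+1}$ to $M$ for large $q_i$ (Proposition~\ref{prop:no_rigid}).

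Two smaller omissions. The version of the Reid--Walsh criterion the paper uses (Theorem~\ref{thm:comm_knots}) also requires ruling out lens space surgeries; this is handled by citing the Ichihara--Jong and Lidman--Moore classifications of pretzel knots with lens space surgeries. And for the symmetry group, the paper invokes Boileau--Zimmermann to obtain $\mathrm{Sym}(\mathbb{S}^{3},K)\cong\mathbb{Z}/2$ and then identifies the nontrivial element as the strong inversion whose axis runs through every twist region; you would need a comparable result computing the full symmetry group, not merely an exhibition of one involution.
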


Proving that a particular knot complement is the only knot complement in its commensurability class is generally not an easy task.  Only two large classes of knot complements are known to have this property. Reid and Walsh in \cite{ReWa} have shown that hyperbolic $2$-bridge knot complements are the only knot complements in their respective commensurability classes, and similarly, Macasieb and Mattman in \cite{MM} have shown this for the complements of hyperbolic pretzel knots of the form $K\left( \frac{1}{-2}, \frac{1}{3}, \frac{1}{n} \right)$, $n \in \mathbb{Z} \setminus \left\{7\right\}$.  Usually the hardest part of this work is showing that these knot complements have no \textit{hidden symmetries}, that is, these knot complements are not irregular covers of orbifolds. We are able to rule out hidden symmetries by analyzing the cusp shapes of certain \textit{untwisted augmented links} (see Section \ref{sec:GEOofUAL})  that we Dehn fill along to obtain our pretzel knot complements.

Now, let us outline the rest of this paper.  In Section \ref{subsec:MR}, we prove the monotonicity of the mass ratio for least area disks in $\mathbb{H}^{3}$.  This result helps give a lower bound on the area of a least area disk inside a ball in $\mathbb{H}^{3}$. Section \ref{sec:LA_surfaces_and_geo} gives the proof of Theorem \ref{thm:main} and states this result in its full generality. This section is broken down into subsections, each dealing with one of the conditions to be satisfied for Theorem \ref{thm:main}.  Section \ref{subsec:symmsurf_and_mut} gives the proof of Theorem \ref{cor:syspreserved} and a number of corollaries to this theorem. In Section \ref{sec:RT_and_PK}, we construct and describe our class of hyperbolic pretzel knots which are mutants of one another.  We also highlight a theorem from our past work \cite{Mi} that describes how many of these mutant pretzel knot complements are non-isometric and have the same volume. In Section \ref{sec:GEOofUAL}, we analyze the geometry of our pretzel knots by realizing them as Dehn fillings of untwisted augmented links, whose complements have a very simple polyhedral decomposition. In particular, this analysis allows us to put a lower bound on the normalized lengths of the Dehn fillings performed to obtain our pretzel knot complements, and also, helps determine the cusp shapes of the pretzel knots themselves. In Section \ref{sec:commensurablity} we prove that these knots are pairwise incommensurable. In Section \ref{sec:mutations_sys}, we apply Theorem \ref{cor:syspreserved} to show that our class of pretzel knot complements have the same initial length spectrum.  We also give an application to closed hyperbolic $3$-manifolds with the same initial length spectrum. Putting all these results together gives Theorem \ref{cor:systole_and_vol} in Section \ref{sec:mutations_sys}. 

We are grateful to David Futer for his help and guidance with this project.  We thank Frank Morgan for directing us to the monotonicity of the mass ratio result found in his book \cite{Mo}.  We thank Jessica Purcell for providing useful comments and help with understanding cone deformations. Finally, we thank the referees for making numerous helpful comments. 


\section{Monotonicity of the Mass ratio for least area disks in $\mathbb{H}^{3}$}
\label{subsec:MR}

Throughout this section, $\ell(-)$ will denote hyperbolic length, and $B(a,r) \subset \mathbb{H}^{3}$ will denote a ball of radius $r$ centered at $a$.  Also, $A(-)$ will denote the area that a smoothly immersed surface inherits from a hyperbolic $3$-manifold by pulling back the hyperbolic metric. Here, we establish a useful result for least area disks in $\mathbb{H}^{3}$.  

\begin{defn}[Least Area Disk]
\label{defn:LA2}
Let $D \subset M$ be a properly and smoothly embedded disk in a Riemannian $3$-manifold $M$.  Let $c$ be a simple closed curve in $M$ such  that $\partial D =c$.  Then $D$ is called a \emph{least area disk} in $M$, if $D$ minimizes area amongst all properly and smoothly immersed disks with boundary $c$. 
\end{defn}

The compactness theorem in \cite[Theorem 5.5]{Mo} guarantees that this infimum is always realized for disks in $\mathbb{R}^{n}$. Furthermore, the regularity theorem in \cite[Theorem 8.1]{Mo} says such an area minimizing disk is smooth and embedded in its interior. Similar results hold for disks in $\mathbb{H}^{n}$. The following definition will be useful for analyzing least area disks in $\mathbb{H}^{3}$.

\begin{defn}[Mass Ratio and Density]
Let $a \in \mathbb{H}^{3}$ and consider $A(D \cap B(a,r))$, the area of a disk inside a ball.  Define the \emph{mass ratio} to be 
\begin{center}
$\Theta (D, a, r) = \frac{A(D \cap B(a,r))}{4\pi\sinh^{2}(\frac{r}{2})}$.
\end{center} 
Define the \emph{density} of $D$ at $a$ to be
\begin{center}
$\Theta (D,a) = \lim_{r \rightarrow 0} \Theta (D, a, r)$.
\end{center}
\end{defn}

A few comments about the above definition. First, $4\pi\sinh^{2}(\frac{r}{2})$ is the area of a totally geodesic disk of radius $r$ in $\mathbb{H}^{n}$.  Also, for smoothly immersed surfaces, $\Theta (D,a) \geq 1$ at any point $a \in D$. For an embedded surface we actually have $\Theta (D,a) = 1$. If $D$ is not embedded at a point $a \in D$, then restricting to a subset of $D'$ of $D$ so that $D' \cap B(a,r)$ is an embedding only decreases the numerator of the mass ratio. See \cite[Chapter 2]{Mo} for more on densities.

The monotonicity of the mass ratio was proved in the case for Euclidean geometry by Federer \cite{Fe} and a proof can also be found in Morgan \cite[Theorem $9.3$]{Mo}.  Here, we obtain a similar result in $\mathbb{H}^{3}$ by using the same techniques as the proof given in Morgan. Also, this result is proved in greater generality in \cite[Section 2]{An}.

\begin{thm}
\label{thm:Monotonicity_of_MR}
Let $D$ be a least area disk in $\mathbb{H}^{3}$. Let $a \in \mathring D \subset \mathbb{H}^{3}$.  Then for $0 < r< d(a, \partial D)$, the mass ratio $\Theta (D, a, r)$ is a monotonically increasing function of $r$. 
\end{thm}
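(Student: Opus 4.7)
The plan is to mirror Morgan's Euclidean proof of mass-ratio monotonicity \cite[Theorem 9.3]{Mo}, replacing Euclidean volume-growth factors by their hyperbolic counterparts. Set $A(r) = A(D \cap B(a,r))$ and $L(r) = \ell(D \cap \partial B(a,r))$; by Sard's theorem $D$ is transverse to $\partial B(a,r)$ for almost every $r \in (0, d(a, \partial D))$, so $L(r)$ is defined a.e.\ and $A(r)$ is absolutely continuous. Applying the coarea formula to the $1$-Lipschitz function $x \mapsto d(x,a)|_D$ then gives
\[
A(r) \;=\; \int_{D \cap B(a,r)} 1\, dA \;\geq\; \int_{D \cap B(a,r)} |\nabla d(\cdot,a)|\, dA \;=\; \int_0^r L(t)\, dt,
\]
so $A'(r) \geq L(r)$ for a.e.\ $r$, with equality precisely at radii where $D$ meets $\partial B(a,r)$ orthogonally.

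The second ingredient is a geodesic-cone comparison. Let $C_r$ be the union of radial geodesic segments from $a$ out to the curves $D \cap \partial B(a,r)$. In geodesic polar coordinates based at $a$ the hyperbolic metric is $ds^2 + \sinh^2(s)\,d\omega^2$, and parametrizing $C_r$ by $(s,\tau)$ with $\tau$ the arc length along the direction curves on the unit tangent sphere at $a$ gives an induced area element $\sinh(s)\, ds\, d\tau$. A direct integration then yields
\[
A(C_r) \;=\; L(r)\cdot \frac{\cosh(r)-1}{\sinh(r)} \;=\; L(r)\tanh(r/2),
\]
the hyperbolic analogue of the Euclidean cone area $L(r)\cdot r/2$. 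Because $D$ is least area and $C_r$ is a competitor with the same boundary curves inside $B(a,r)$, the least-area property of $D$ localizes to $A(r) \leq L(r)\tanh(r/2)$. Combining this with the coarea inequality produces the key differential inequality $A'(r) \geq A(r)\coth(r/2)$ a.e. Since $\tfrac{d}{dr}\bigl(4\pi\sinh^2(r/2)\bigr) = 2\pi\sinh(r)$ and $\sinh(r)/(2\sinh^2(r/2)) = \coth(r/2)$, differentiating $\Theta(D,a,r) = A(r)/(4\pi\sinh^2(r/2))$ shows that its derivative has the same sign as $A'(r) - A(r)\coth(r/2)$, which is nonnegative; combined with the absolute continuity of $A(r)$ this forces $\Theta(D,a,r)$ to be nondecreasing in $r$.

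The step I expect to require the most care is the minimality comparison $A(r) \leq A(C_r)$. The cone $C_r$ has a vertex singularity at $a$, and when $D \cap \partial B(a,r)$ consists of several circles (or is not embedded in $\partial B(a,r)$) the cone need not be an embedded disk, so $C_r$ is not immediately admissible as a competitor in the sense of Definition \ref{defn:LA2}. The cleanest route is to phrase the comparison in the language of integral currents or varifolds, where minimizers localize automatically and $C_r$ is a legitimate competitor; alternatively, one can approximate $C_r$ by smooth embedded disks avoiding the vertex and pass to the limit. Once this comparison is in hand, the remainder of the proof reduces to the hyperbolic polar-coordinate bookkeeping sketched above.
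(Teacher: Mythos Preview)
Your proof is correct and follows essentially the same route as the paper: coarea inequality $A'(r)\geq L(r)$, hyperbolic cone comparison $A(r)\leq L(r)\tanh(r/2)$, and then differentiation of the mass ratio. The paper handles the cone comparison more casually than you do, simply asserting $A(D\cap B(a,r))\leq A(C)$ from the least-area hypothesis without addressing the vertex singularity or embeddedness issues you flag; otherwise the arguments are step-for-step the same, including the identity $\tanh(r/2)=(\cosh r -1)/\sinh r$ that the paper isolates as a separate lemma.
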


To prove this theorem, we need the following basic fact in hyperbolic trigonometry:

\begin{lemma}
\label{lemma:hyptrig}
$\frac{\sinh(\frac{r}{2})}{\cosh(\frac{r}{2})} = \frac{\cosh(r)-1}{\sinh(r)}$, for $r >0$.
\end{lemma}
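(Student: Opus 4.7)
The plan is to verify this identity via a direct application of the hyperbolic double-angle formulas, which reduce both sides to a common form almost immediately. There is no real obstacle here; the lemma is purely computational and is stated separately only so that it can be cited cleanly in the proof of Theorem~\ref{thm:Monotonicity_of_MR}.

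First I would record the two double-angle identities I intend to use:
\begin{equation*}
\sinh(r) = 2\sinh(r/2)\cosh(r/2), \qquad \cosh(r) = 1 + 2\sinh^{2}(r/2).
\end{equation*}
The first is standard, and the second follows from $\cosh(r) = \cosh^{2}(r/2) + \sinh^{2}(r/2)$ together with $\cosh^{2}(r/2) - \sinh^{2}(r/2) = 1$.

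Next I would substitute into the right-hand side of the claimed identity. From the second identity, $\cosh(r) - 1 = 2\sinh^{2}(r/2)$, and from the first, $\sinh(r) = 2\sinh(r/2)\cosh(r/2)$, both of which are nonzero for $r > 0$ (so the division is legitimate). Dividing gives
\begin{equation*}
\frac{\cosh(r) - 1}{\sinh(r)} = \frac{2\sinh^{2}(r/2)}{2\sinh(r/2)\cosh(r/2)} = \frac{\sinh(r/2)}{\cosh(r/2)},
\end{equation*}
which is exactly the left-hand side. This completes the proof. The only care needed is the positivity assumption $r > 0$, which ensures $\sinh(r/2) \neq 0$ and $\sinh(r) \neq 0$ so that the cancellation and the original fraction are both well-defined.
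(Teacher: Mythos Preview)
Your proof is correct and, like the paper's, is a direct one-line verification using standard hyperbolic identities. The only cosmetic difference is that the paper works from the left-hand side using the half-angle formulas $\sinh(r/2)=\sqrt{(\cosh r-1)/2}$ and $\cosh(r/2)=\sqrt{(\cosh r+1)/2}$, whereas you work from the right-hand side using the double-angle formulas; your route is arguably cleaner since it avoids square roots entirely.
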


\begin{proof}

This is a simple algebraic exercise that requires a few identities:

\begin{center}
$\frac{\sinh(\frac{r}{2})}{\cosh(\frac{r}{2})} = \sqrt{\frac{\cosh(r)-1}{\cosh(r)+1}} = \frac{\cosh(r)-1}{\sqrt{\cosh^{2}(r)-1}} = \frac{\cosh(r)-1}{\sinh(r)}$.
\end{center}

The first equality comes from well-known half-angle formulas. The rest of the equalities come from algebraic manipulations and the fact that $1 = \cosh^{2}(r) - \sinh^{2}(r)$.
\end{proof}

 \begin{proof}[Proof of Theorem \ref{thm:Monotonicity_of_MR}]
For $0 < r< d(a, \partial D)$, let $f(r)$ denote $A(D \cap B(a,r))$.  Obviously, $f$ is monotonically increasing, which implies that $f'(r)$ exists almost everywhere. Set $\gamma_{r} = \partial (D \cap B(a,r))$. Now, we have that

\begin{center}
 $(1)$ \; \;  $ \ell(\gamma_{r}) \leq f'(r)$,
\end{center}
 which is the ``co-area formula'' from \cite[Lemma 2.2]{HS}.  This inequality holds whenever $\gamma_{r}$ is a $1$-manifold, i.e., whenever $D$ intersects $\partial B(a,r)$ transversely. Since $D$ is area-minimizing, $A(D \cap B(a,r)) \leq A(C)$, where $C$ is the cone over $\gamma_{r}$ to $a$.

\begin{figure}[ht]
\includegraphics[scale=0.65]{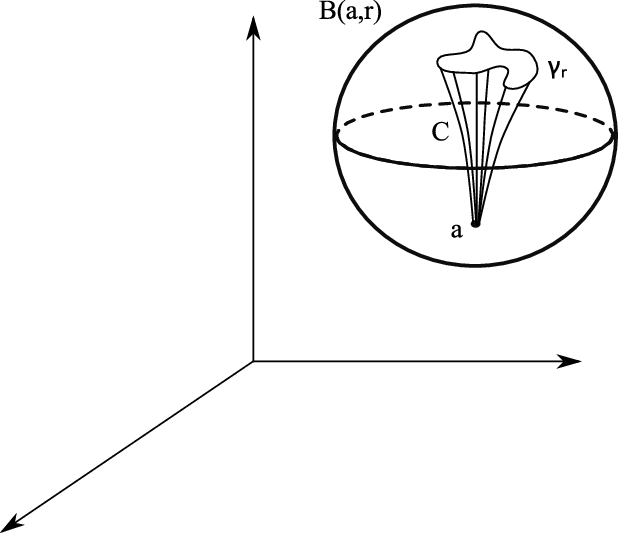}
\caption{The hyperbolic cone $C$ over $\gamma_{r}$ to $a$ in the upper half-space model of $\mathbb{H}^{3}$.}
\label{Cone_in_Ball}
\end{figure}

\textbf{Claim:} $A(C) = \ell(\gamma_{r}) \frac{\cosh(r) - 1}{\sinh(r)}$.
\\
Let $\gamma$ be the projection of $\gamma_{r}$ to the unit tangent sphere centered at $a$.  Our area form is $dA =  ds dR$, where $dR$ is the change in radius of a hyperbolic sphere and $ds = \sinh(R) d\theta$ is arc length on a sphere of radius $R$.  The area form on $A(C)$ is inherited from geodesic polar coordinates in $\mathbb{H}^{3}$. We have that 
\begin{center}
$A(C) = \int_{0}^{r} \int_{0}^{\ell(\gamma_{R})} ds dR =  \int_{0}^{r} \int_{0}^{\ell(\gamma_{R})} \sinh(R) d\theta dR = \int_{\gamma} d\theta \int_{0}^{r} \sinh(R) dR = \ell(\gamma)(\cosh(r) - 1)$.
\end{center}
In order to rescale to make $A(C)$ a function of $\ell(\gamma_{r})$, we use the fact that $\ell(\gamma_{r}) = \int_{\gamma}\sinh(r) d\theta = \ell(\gamma) \sinh(r)$ to get that $A(C) = \ell(\gamma_{r}) \frac{\cosh(r)-1}{\sinh(r)}$.  
\\
Putting (1) together with the previous claim and Lemma \ref{lemma:hyptrig} gives:
\begin{center}
 $f(r) \leq A(C) = \ell(\gamma_{r}) \frac{\cosh(r) - 1}{\sinh(r)} \leq f'(r) \frac{\cosh(r) - 1}{\sinh(r)} = f'(r) \frac{\sinh(\frac{r}{2})}{\cosh(\frac{r}{2})}$.
\end{center} 
Consequently,
\begin{center}
$\frac{d}{dr} \left[4\pi \Theta (D, a, r)\right] = \frac{d}{dr}\left[f(r) \sinh^{-2}(\frac{r}{2}) \right] = \frac{f'(r)}{\sinh^{2}(\frac{r}{2})} - \frac{f(r)\cosh(\frac{r}{2})}{\sinh^{3}(\frac{r}{2})}  =  \frac{\cosh(\frac{r}{2})}{\sinh^{3}(\frac{r}{2})} \left[ f'(r)\frac{\sinh(\frac{r}{2})}{\cosh(\frac{r}{2})} - f(r) \right] \geq 0$ 
\end{center}
since $\frac{\cosh(\frac{r}{2})}{\sinh^{3}(\frac{r}{2})} \geq 0$ for any $r > 0$.  
\end{proof}

The following corollary will play a pivotal role in Section \ref{sec:LA_surfaces_and_geo}.

\begin{cor}
\label{cor2}
Suppose $D \subset \mathbb{H}^{3}$ is a least area disk and $a \in \mathring D$. Then $A(D \cap B(a,r)) \geq  4\pi\sinh^{2}(\frac{r}{2})$, for any $r$, $0 < r \leq d(a, \partial D)$. 
\end{cor}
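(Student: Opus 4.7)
The plan is to deduce the corollary directly from the monotonicity of the mass ratio (Theorem \ref{thm:Monotonicity_of_MR}) together with the fact that the density of an embedded smooth surface at an interior point equals $1$.

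First, fix $a \in \mathring D$ and let $R = d(a, \partial D)$. By Theorem \ref{thm:Monotonicity_of_MR}, the function $r \mapsto \Theta(D, a, r)$ is monotonically non-decreasing on $(0, R)$. Consequently,
\[
\Theta(D, a, r) \;\geq\; \lim_{s \to 0^{+}} \Theta(D, a, s) \;=\; \Theta(D, a)
\]
for every $r \in (0, R)$.

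Next, I would invoke the fact (noted in the comments following the Mass Ratio and Density definition, and standard in geometric measure theory; see \cite[Chapter 2]{Mo}) that for a smoothly embedded surface one has $\Theta(D, a) = 1$ at every interior point $a$. Since $D$ is a properly and smoothly embedded disk and $a \in \mathring D$, this applies here. Combining with the previous display gives
\[
\frac{A(D \cap B(a, r))}{4\pi \sinh^{2}(r/2)} \;=\; \Theta(D, a, r) \;\geq\; 1,
\]
hence $A(D \cap B(a, r)) \geq 4\pi \sinh^{2}(r/2)$ for all $r \in (0, R)$.

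Finally, to handle the endpoint $r = R = d(a, \partial D)$, I would use continuity from below: the area function $r \mapsto A(D \cap B(a, r))$ is monotone non-decreasing and left-continuous (by monotone convergence applied to the nested family of open balls $B(a, r)$), while $4\pi \sinh^{2}(r/2)$ is continuous. Taking $r \nearrow R$ in the inequality above yields the desired bound at $r = R$ as well. There is no real obstacle in this argument; the only point requiring care is justifying $\Theta(D, a) = 1$, which relies on the smooth embeddedness of $D$ at interior points guaranteed by the regularity theorem cited just after Definition \ref{defn:LA2}.
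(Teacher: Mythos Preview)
Your proof is correct and follows essentially the same approach as the paper: monotonicity of the mass ratio from Theorem~\ref{thm:Monotonicity_of_MR} gives $\Theta(D,a,r) \geq \Theta(D,a)$, and the density bound at a smooth interior point then yields the area inequality, with the endpoint $r = d(a,\partial D)$ handled by continuity. The only cosmetic difference is that the paper invokes $\Theta(D,a) \geq 1$ for smoothly immersed surfaces, whereas you use the embedded value $\Theta(D,a) = 1$; either suffices.
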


\begin{proof}
Let $D \subset \mathbb{H}^{3}$ be a least area surface and $a \in \mathring D \subset \mathbb{H}^{3}$. Since $\Theta (D, a, r)$ is increasing with $r$, we have that: 
\begin{center}
$\Theta (D,a) =\lim_{t \rightarrow 0} \Theta (D, a, t) \leq \Theta (D,a,r) =\frac{A(D \cap B(a,r))}{4\pi\sinh^{2}(\frac{r}{2})}$, 
\end{center}
for any $0 < r < d(a, \partial D)$.  By continuity of the area function, we can extend this up to $r = d(a, \partial D)$.

Now, being smoothly immersed implies that $\Theta(D,a) \geq 1$ for all $a \in \mathring S$.  By the above, we have that $A(D \cap B(a,r)) \geq 4\pi\sinh^{2}(\frac{r}{2})$, for any $0 < r \leq d(a, \partial D)$, as desired. 
\end{proof}


\section{Least area surfaces and short geodesics in hyperbolic $3$-manifolds} 
\label{sec:LA_surfaces_and_geo}

First, let us set some notation.  Let $M$ be a hyperbolic $3$-manifold. The universal cover of $M$ is $\mathbb{H}^{3}$, and there exists a covering map $\rho: \mathbb{H}^{3} \rightarrow M$. Let $T_{r}(\gamma)$ denote an embedded tubular neighborhood of radius $r$ about a closed geodesic $\gamma \subset M$. $\gamma$ lifts to a geodesic, $\tilde{\gamma}$, in $\mathbb{H}^{3}$, and we will assume that the endpoints of $\tilde{\gamma}$ are $0$ and $\infty$. Let $T_{r}(\tilde{\gamma})$ be the tubular neighborhood of radius $r$ about $\tilde{\gamma}$ in $\mathbb{H}^{3}$.

Let $F$ be a surface in $M$ realized by the map $\varphi: S \rightarrow F$. Suppose $\gamma \cap F \neq \emptyset$, and say $p_{0} = \varphi(s_{0}) \in  \gamma \cap F \subset M$. Let $\tilde{S}$ be the universal cover of $S$, and denote by $\rho_{1}$ the covering map $\rho_{1}: \tilde{S} \rightarrow S$. Let $\tilde{s_{0}} \in \tilde{S}$ be a point with $\rho_{1}(\tilde{s_{0}}) = s_{0}$ and let $\tilde{\varphi}: \tilde{S} \rightarrow \mathbb{H}^{3}$ be a lift of $\varphi$ such that $\tilde{p_{0}} = \tilde{\varphi}(\tilde{s_{0}})$ is a point in $\tilde{\gamma}$. We have the following commutative diagram.   

\begin{center}
$\begin{CD}
(\tilde{S}, \tilde{s_{0}}) @> \tilde{\varphi} >> (\mathbb{H}^{3}, \tilde{p_{0}})\\
@VV\rho_{1}V @VV\rho V\\
(S, s_{0}) @>\varphi>> (M, p_{0})
\end{CD}$
\end{center}

The focus of the following subsections is to prove a number of propositions that can tell us when $\gamma$ can be isotoped disjoint from $F$ based on a variety of geometric and topological properties. Specifically, we will be interested in the tube radius of $\gamma$, the length of $\gamma$, and particular Dehn filling slopes. We will then use these conditions to show when the initial length spectrum can be preserved under mutation.  We will always be working with an \textit{almost least area surface} $F$ that is incompressible and $\partial$-incompressible in a hyperbolic $3$-manifold $M$.  The existence and embeddedness of such surfaces is provided by the following result of Ruberman. First, we define an almost least area surface.

\begin{defn}[Almost Least Area Surface in $M$]
\label{def:ALAS}
A properly and smoothly embedded surface $F$ in a Riemannian $3$-manifold $M$ is called \textit{almost least area} if $F$ is either a least area surface (as given in Definition \ref{defn:LA}), or is the boundary of an $\epsilon$-neighborhood of a one-sided embedded least area surface $F'$.  
\end{defn}

\textbf{Remark:} Theorems about almost least area surfaces hold for all $\epsilon$ sufficiently small. 
\newline

For the rest of Section \ref{sec:LA_surfaces_and_geo}, we will assume that any surface $F \subset M$ is a properly and smoothly embedded surface inside of a hyperbolic $3$-manifold $M$. 

\begin{thm} \cite[Theorem 1.6]{Ru}
\label{thm:LAsurfaces}
Let $F \subset M$ be a surface that is incompressible and $\partial$-incompressible.  Then $F$ can be properly isotoped to an almost least area surface.
\end{thm}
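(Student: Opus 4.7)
The plan is to follow the outline of Ruberman's argument, handling the one-sided and noncompact cases by reduction to the closed two-sided setting, which is covered by the cited results of Schoen--Yau and Freedman--Hass--Scott. The essential new difficulty in a finite-volume hyperbolic $3$-manifold is that $F$ may have ends running into cusps, so area minimization must be performed with care to prevent an area-minimizing sequence from escaping to infinity.

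First I would dispose of the one-sided case. If $F$ is one-sided, let $F'$ be the frontier of a small tubular neighborhood of $F$. Then $F'$ is two-sided, and a standard check shows that $F'$ inherits incompressibility and $\partial$-incompressibility from $F$ (any compressing or $\partial$-compressing disk for $F'$ can be surgered to yield one for $F$). If I can isotope $F'$ to a least area embedded surface, then by the definition of almost least area (Definition \ref{def:ALAS}) the original $F$ has been isotoped to an almost least area surface. So I may assume $F$ is two-sided.

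Next I would handle the compact case as a warm-up. If $M$ is closed, the incompressibility of $F$ lets me apply \cite{ScYa} to minimize area in the homotopy class of $F$, producing a smooth least area immersion; then \cite{FHS} upgrades the immersion to an embedding properly isotopic to $F$. For the noncompact case, I would use a cusp-exhaustion argument. Truncate each cusp of $M$ by a deep horospherical torus at height $T$ to obtain a compact submanifold $M_T$. Since horospherical tori have constant nonzero mean curvature and are mean convex from the thick-part side, they serve as barriers for area minimization. The $\partial$-incompressibility hypothesis on $F$ forces the intersections of $F$ with the horospherical tori to be essential simple closed curves (after an initial isotopy removing inessential intersections using that $F$ is incompressible), and similarly forbids any component of $F$ from lying trapped in a cusp unless it is itself a horospherical surface. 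Minimize area inside $M_T$ in the isotopy class, keeping the boundary curves on the horospherical tori; the barrier property prevents the minimizer from being pushed off into the cusp and ensures it stays in $M_T$. Let $T \to \infty$: a uniform area bound from Gauss--Bonnet and the cusp geometry, together with the barrier confinement, yields a smoothly convergent subsequence whose limit is a properly embedded least area surface in $M$ isotopic to $F$. Freedman--Hass--Scott again upgrades any residual immersion to embeddedness.

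The main obstacle I expect is the noncompact step, specifically ruling out the two pathologies that (i) area-minimizing representatives in $M_T$ drift off to infinity as $T$ grows, and (ii) ends of $F$ could ``spiral'' around cusps and prevent the minimizers from being properly isotopic to the original $F$. Both are controlled by using mean convexity of horospherical tori in tandem with $\partial$-incompressibility: the former gives a one-sided barrier, and the latter guarantees the asymptotic boundary behavior of $F$ in cusps is combinatorially rigid, so the limiting surface truly represents the original isotopy class. Once these are in place, smoothness and embeddedness are essentially automatic from the cited regularity theorems.
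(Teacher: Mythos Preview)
The paper does not give its own proof of this statement; it is simply quoted from \cite{Ru}. So there is nothing in the paper to compare your proposal against, and your sketch is essentially an attempt to reconstruct Ruberman's argument rather than a parallel to anything here.

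That said, your handling of the one-sided case is backwards and does not do what you claim. You start with a one-sided $F$, pass to the two-sided frontier $F'$ of a tubular neighborhood, and propose to minimize $F'$; you then assert that ``by the definition of almost least area the original $F$ has been isotoped to an almost least area surface.'' But isotoping $F'$ does nothing to $F$, and a one-sided $F$ can never be the boundary of an $\epsilon$-neighborhood of anything, so the second clause of Definition~\ref{def:ALAS} is unavailable to you. The point of the ``almost least area'' terminology is exactly the opposite of what you describe: one starts with a \emph{two-sided} incompressible $F$, runs area minimization in its homotopy class, and by Freedman--Hass--Scott either the minimizer is embedded and isotopic to $F$, or it factors through a double cover of a one-sided embedded least area surface $F'$. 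In the latter case $F$ is isotopic to $\partial N_{\epsilon}(F')$, and that is the almost least area representative. (In the paper's setting $F$ is orientable in an orientable $M$, hence two-sided, so your reduction was unnecessary to begin with; but your sketch also omits the genuine dichotomy that motivates Definition~\ref{def:ALAS}.)

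Your noncompact step---using mean-convex horospherical tori as barriers, together with $\partial$-incompressibility to control ends in the cusps, and passing to a limit---is the right outline and is essentially Ruberman's strategy.
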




\subsection{Least area surfaces and the tube radius of $\gamma$}
\label{subsec:LA_surfaces}

The following proposition tells us that a closed geodesic $\gamma$ can be isotoped disjoint from an incompressible surface, if $\gamma$ has a sufficiently large embedded tubular radius. This fact can also be shown using \cite[Lemma 4.3]{FP2}.  However, here we provide additional geometric information about $\gamma \cap F$, when $F$ is in almost least area form. Recall that by a closed curve $n \cdot \gamma$, we mean a simple closed curve that is in the homotopy class of $[n \cdot \gamma] \in \pi_{1}(\partial T_{r}(\gamma))$.

\begin{prop}
\label{thm:LA_surface_disjoint}
Let $\gamma \subset M$ be a closed geodesic with embedded tubular radius $r$, and let $F$ be a surface in $M$ that is incompressible and $\partial$-incompressible.  Set $h(x) = 2\sinh^{-1}(\sqrt{\frac{x}{2}})$. Assume  $r> h( \left|\chi(F) \right| )$.  Then $\gamma$ can be isotoped disjoint from $F$. Furthermore, if $F$ is in almost least area form, then either $\gamma \cap F = \emptyset$ without any isotopy or $n \cdot \gamma$ is isotopic into $F$ for some $n \in \mathbb{N}$. In particular, if $\left|\chi(F) \right| \leq 2$, then our result holds whenever $r > 2 \ln(1 + \sqrt{2}) $.
\end{prop}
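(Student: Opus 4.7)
The plan is to isotope $F$ to almost least area form via Theorem \ref{thm:LAsurfaces}, take it transverse to $\partial T_r(\gamma)$, and analyze $F \cap T_r(\gamma)$ component by component. Because $F$ is incompressible and $\partial$-incompressible, each component of $F \cap T_r(\gamma)$ is incompressible in the solid torus $T_r(\gamma)$, and the standard classification of essential surfaces in a solid torus forces each such component to be either a disk or an annulus that is boundary-parallel with boundary curves isotopic to $\gamma$ on $\partial T_r(\gamma)$.

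The disk case will be ruled out by an area contradiction that directly feeds on Section \ref{subsec:MR}. Suppose a disk component $D$ meets $\gamma$ at a point $p_0$, and lift to $\mathbb{H}^3$ to obtain a least area disk $\tilde D$ containing a lift $\tilde p_0 \in \tilde\gamma$; here $\tilde D$ is least area because any strictly smaller-area replacement would descend to decrease the area of $F$. Since $\partial \tilde D \subset \partial T_r(\tilde\gamma)$ and $\tilde p_0$ lies on the axis, we have $d(\tilde p_0, \partial \tilde D) \geq r$, so Corollary \ref{cor2} gives
\[
4\pi \sinh^2(r/2) \;\leq\; A\bigl(\tilde D \cap B(\tilde p_0, r)\bigr) \;\leq\; A(\tilde D) = A(D) \;\leq\; A(F).
\]
For the matching upper bound, the almost minimality of $F$ together with the Gauss equation in an ambient $-1$-curved $3$-manifold forces the intrinsic curvature to satisfy $K \leq -1$, so Gauss--Bonnet yields $A(F) \leq 2\pi|\chi(F)|$. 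Chaining these bounds and solving for $r$ gives $r \leq 2\sinh^{-1}\!\bigl(\sqrt{|\chi(F)|/2}\bigr) = h(|\chi(F)|)$, contradicting the hypothesis $r > h(|\chi(F)|)$. Hence no disk component of $F \cap T_r(\gamma)$ can meet $\gamma$.

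Every component of $F \cap T_r(\gamma)$ that meets $\gamma$ must therefore be a boundary-parallel annulus $A$, which cobounds a slab with a sub-annulus of $\partial T_r(\gamma)$; pushing $\gamma$ across this parallelism isotopes it off $A$, and doing this for every such component produces an isotopy of $\gamma$ disjoint from $F$. If $F$ is left in almost least area form and still meets $\gamma$, each such annulus is isotopic into $\partial T_r(\gamma)$, and the resulting sub-annulus carries a simple closed curve in the homotopy class of $n\cdot\gamma$ for some $n \in \mathbb{N}$, which yields the second sentence of the proposition. The final specialization follows from $h(2) = 2\sinh^{-1}(1) = 2\ln(1 + \sqrt 2)$. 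The main technical hurdle is the disk case: the sharp constant $h(|\chi(F)|)$ comes out only by pairing the monotonicity of the mass ratio with the Gauss--Bonnet bound, so care is needed to confirm that neither estimate is spoiled by the $\epsilon$-perturbation built into the definition of almost least area.
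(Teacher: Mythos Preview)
Your overall strategy coincides with the paper's: put $F$ in almost least area form, look at $F_r = F \cap T_r(\gamma)$, reduce to disks and boundary-parallel annuli, kill disks meeting $\gamma$ via the monotonicity/Gauss--Bonnet area contradiction, and handle annuli by boundary-parallelism. The disk and annulus cases themselves are argued correctly and match the paper.

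The gap is in the sentence ``Because $F$ is incompressible and $\partial$-incompressible, each component of $F \cap T_r(\gamma)$ is incompressible in the solid torus $T_r(\gamma)$.'' That implication does not follow from incompressibility of $F$ in $M$ alone. If $D' \subset T_r(\gamma)$ is a compressing disk for $F_r$, incompressibility of $F$ only gives a disk $D \subset F$ with $\partial D = \partial D'$; nothing forces $D$ to lie inside $T_r(\gamma)$, so $\partial D'$ need not bound a disk in $F_r$. You could of course simplify $F \cap \partial T_r(\gamma)$ by an innermost-disk isotopy, but that destroys the almost least area form of $F$, and with it both the Gauss--Bonnet bound $A(F) \le 2\pi|\chi(F)|$ and the least-area property of the lifted disk $\tilde D$ you feed into Corollary~\ref{cor2}. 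The paper closes exactly this gap by using minimality a second time: it argues that such a disk $D \subset F$ must lie outside $T_r(\gamma)$, lifts it to $\mathbb{H}^3$, and shows that the nearest-point projection onto $\partial T_r(\tilde\gamma)$ strictly decreases area pointwise, contradicting the least area property of $F$. You need an argument of this kind before you can invoke the classification of incompressible surfaces in a solid torus while keeping $F$ minimal.
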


In order to prove this proposition, we will need the following lemma, which gives a lower bound on the area of a least area disk inside a tubular neighborhood of a geodesic.

\begin{lemma}
\label{lemma:LA_ surface_in_tube} 
Let $\gamma \subset M$ be a closed geodesic with embedded tubular neighborhood $T_{r}(\gamma)$.  Suppose $D_{r}$ is a least area disk in $M$ such that $\gamma \cap D_{r} \neq \emptyset$ and $ \partial D_{r} \subset \partial T_{r}(\gamma)$. Then $A(D_{r} \cap T_{r}(\gamma)) \geq 4\pi\sinh^{2}(\frac{r}{2})$.  
\end{lemma}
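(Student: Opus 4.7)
The plan is to transfer the problem to the universal cover and apply Corollary \ref{cor2}. Fix a point $p_{0} \in \gamma \cap D_{r}$ and a lift $\tilde{p}_{0} \in \tilde{\gamma}$ of $p_{0}$. Let $\tilde{D}_{r}$ denote the sheet of $\rho^{-1}(D_{r})$ containing $\tilde{p}_{0}$. Since $D_{r}$ is a simply connected, properly embedded disk and $\partial D_{r} \subset \partial T_{r}(\gamma)$ is disjoint from $\gamma$, the restriction $\rho|_{\tilde{D}_{r}}$ is an isometric embedding onto $D_{r}$, and $\tilde{p}_{0}$ lies in the interior $\mathring{\tilde{D}_{r}}$.

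The first step is to verify that $\tilde{D}_{r}$ is a least area disk in $\mathbb{H}^{3}$ in the sense of Definition \ref{defn:LA2}. If $D' \subset \mathbb{H}^{3}$ were a properly smoothly immersed disk with $\partial D' = \partial \tilde{D}_{r}$ and $A(D') < A(\tilde{D}_{r})$, then $\rho(D')$ would be a properly smoothly immersed disk in $M$ with boundary $\rho(\partial \tilde{D}_{r}) = \partial D_{r}$. Since $\rho$ is a local isometry that may identify points, $A(\rho(D')) \leq A(D') < A(\tilde{D}_{r}) = A(D_{r})$, contradicting the least area property of $D_{r}$.

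The second step is a pair of metric observations inside the lifted tube. By the triangle inequality, every point within distance $r$ of $\tilde{p}_{0} \in \tilde{\gamma}$ is within distance $r$ of $\tilde{\gamma}$, so $B(\tilde{p}_{0}, r) \subset T_{r}(\tilde{\gamma})$. On the other hand, $\partial \tilde{D}_{r}$ lies on $\partial T_{r}(\tilde{\gamma})$, the set of points at distance exactly $r$ from $\tilde{\gamma}$, so each point of $\partial \tilde{D}_{r}$ is at distance at least $r$ from $\tilde{p}_{0} \in \tilde{\gamma}$; hence $d(\tilde{p}_{0}, \partial \tilde{D}_{r}) \geq r$. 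Applying Corollary \ref{cor2} to $\tilde{D}_{r}$ at the interior point $\tilde{p}_{0}$ with this radius yields $A(\tilde{D}_{r} \cap B(\tilde{p}_{0}, r)) \geq 4\pi\sinh^{2}(r/2)$. Using that $\rho|_{\tilde{D}_{r}}$ is area-preserving and carries $\tilde{D}_{r} \cap B(\tilde{p}_{0}, r) \subset \tilde{D}_{r} \cap T_{r}(\tilde{\gamma})$ into $D_{r} \cap T_{r}(\gamma)$, this bound descends to
\[
A(D_{r} \cap T_{r}(\gamma)) \;\geq\; A\bigl(\rho(\tilde{D}_{r} \cap B(\tilde{p}_{0}, r))\bigr) \;=\; A(\tilde{D}_{r} \cap B(\tilde{p}_{0}, r)) \;\geq\; 4\pi\sinh^{2}(r/2).
\]
The step I expect to be the main subtlety is the lifting in paragraph two: one must observe that a hypothetical competitor disk in $\mathbb{H}^{3}$ is permitted to descend to a merely \emph{immersed} disk in $M$ (as Definition \ref{defn:LA2} already compares $D_{r}$ against immersed disks) and that the local isometry $\rho$ can only decrease area. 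Everything else is a direct computation with the tube geometry followed by Corollary \ref{cor2}.
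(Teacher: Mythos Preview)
Your proposal is correct and follows essentially the same approach as the paper: lift $D_{r}$ isometrically to $\mathbb{H}^{3}$ via simple connectivity, observe that the lift remains least area, and apply Corollary~\ref{cor2} at the lifted intersection point $\tilde{p}_{0}$. You have in fact been more careful than the paper, explicitly justifying why the lift is least area (by projecting a hypothetical competitor down via the local isometry $\rho$) and why $d(\tilde{p}_{0},\partial\tilde{D}_{r})\geq r$, both of which the paper asserts without comment.
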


\begin{proof}
Since $\pi_{1}(D_{r})$ is trivial, $D_{r}$ lifts isometrically to a disk $\tilde{D_{r}} \subset T_{r}(\tilde{\gamma}) \subset \mathbb{H}^{3}$, with $\partial \tilde{D_{r}} \subset \partial T_{r}(\tilde{\gamma})$ and $\tilde{p_{0}} \in \tilde{D_{r}} \cap \tilde{\gamma}$. Since $D_{r}$ is least area and $D_{r}$ lifts isometrically to $\tilde{D_{r}}$, $\tilde{D_{r}}$ is a least area disk in $\mathbb{H}^{3}$ for the boundary curve $c = \partial \tilde{D_{r}}$. See figure \ref{Disk_in_Tube}.  By Corollary \ref{cor2}, $A(\tilde{D_{r}} \cap B(\tilde{p_{0}}, r)) \geq  4\pi\sinh^{2}(\frac{r}{2})$. Therefore,

\begin{center}
$A(D_{r}) = A(\tilde{D_{r}}) \geq 4\pi\sinh^{2}(\frac{r}{2})$,
\end{center}
as desired.
\end{proof}

\begin{figure}[ht]
\includegraphics[scale=0.65]{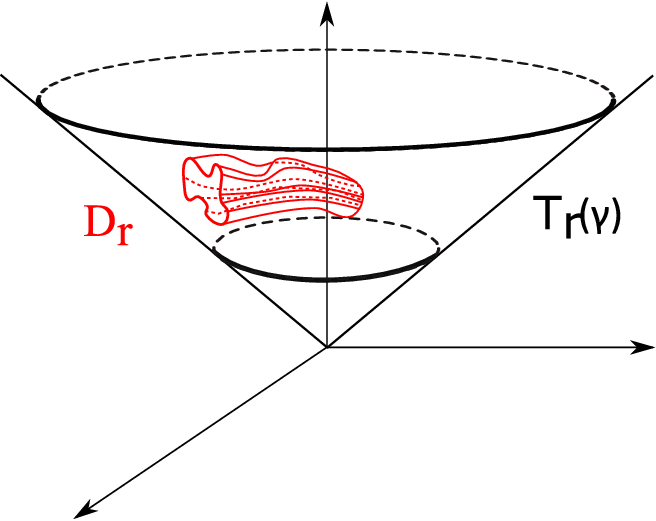}
\caption{The lift of a disk $D_{r}$ to $\mathbb{H}^{3}$ in the upper half-space model.}
\label{Disk_in_Tube}
\end{figure}

\begin{proof}[Proof of Proposition \ref{thm:LA_surface_disjoint}] 

Assume that $F$ has been isotoped to an (embedded) almost least area surface, as provided by Theorem \ref{thm:LAsurfaces}. Set $F_{r}= F \cap T_{r}(\gamma)$. We will always choose $r$ so that $F$ intersects $\partial(T_{r}(\gamma))$ transversely. By Sard's Theorem, this will hold for almost every $r$. Assume that $\gamma \cap F \neq \emptyset$.

\textbf{Claim:} $F_{r}$ is incompressible in $T_{r}(\gamma)$, and consequently, each component of $F_{r}$ is a disk or annulus.

Suppose that $F_{r}$ is compressible in $T_{r}(\gamma)$. Then there exists a disk $D' \subset T_{r}(\gamma)$ with $\partial D' \subset F_{r}$, but $\partial D'$ does not bound a disk in $F_{r}$. Since $F$ is incompressible in $M$, $\partial D'$ bounds a disk in $F$ which must lie at least partially outside of $T_{r}(\gamma)$. Call this disk $D$. Lift $D$ isometrically to a disk $\tilde{D} \subset \mathbb{H}^{3}$ with $\tilde{\partial D} \subset {T}_{r}(\tilde{\gamma})$. Now, $\tilde{D}$ can be homotoped to a disk (keeping $\tilde{\partial D}$ fixed) that lies on ${T}_{r}(\tilde{\gamma})$ via a nearest point projection map. 

We claim that this homotopy is area-decreasing. For this, we give $\mathbb{H}^{3}$ coordinates $(\rho, \theta, z)$, where $\rho \in \left( 0, \infty \right) $, $ \theta \in \left[ 0, 2\pi \right]$, and $z \in \mathbb{R}$. A point in $\mathbb{H}^{3}$ with coordinates $(\rho, \theta, z)$ has distance $\rho$ to the point on $\tilde{\gamma}$ at signed distance $z$ from $(0,0,1)$, and $\theta$ is the polar angle coordinate of its projections to the $(x,y)$-plane. A direct computation shows that
\begin{center}
	 $(\rho, \theta, z) = e^{z}(\tanh \rho \cos \theta, \tanh \rho \sin \theta, \sech \rho)$
\end{center}
pulls back the hyperbolic metric on the upper half-space model to the diagonal metric with respective diagonal entries $1$, $\sinh^{2} \rho$, and $\cosh^{2} \rho$. The nearest point projection to  ${T}_{r}(\tilde{\gamma})$ in these coordinates is given by  $(\rho, \theta, z) \rightarrow  (r, \theta, z)$, for $ \rho \geq r$. A direct computation shows that this projection reduces the area form of $\tilde{D}$ pointwise. Thus, projecting $\tilde{D}$ onto $\partial {T}_{r}(\tilde{\gamma})$ will give an area-decreasing homotopy. 


Projecting this homotopy down to $M$ yields an area-decreasing homotopy of $F$, which is a contradiction if $F$ is a least area surface. If $F$ is an $\epsilon$-neighborhood of a one-sided embedded least area surface $F'$, then we choose $\epsilon$ sufficiently small so that the strict area inequality we get from this homotopy still holds as an inequality. 

Thus, $F_{r}$ is incompressible in $T_{r}(\tilde{\gamma})$. The only incompressible surfaces with boundary that can be inside of $T_{r}(\tilde{\gamma})$ are essential disks and annuli.

We will now consider the two possibilities for the geometry of $\gamma \cap F$, when $F$ is in almost least area form.

\textbf{Case 1:} A component of $F_{r}$  is a disk that intersects $\gamma$.
\\
Say $D_{r}$ is a disk component of $F_{r}$ that intersects $\gamma$. If $F$ is in least area form, then we have the following area inequality:
\begin{center}
$2\pi \left| \chi(F) \right| \geq A(F) > A(F_{r}) \geq A(D_{r} \cap T_{r}(\gamma))  \geq 4\pi\sinh^{2}(\frac{r}{2})$.
\end{center}
The first inequality comes from the Gauss-Bonnet Theorem, combined with properties of minimal surfaces (see Futer--Purcell \cite[Lemma $3.7$]{FP2}). The last inequality comes from Lemma \ref{lemma:LA_ surface_in_tube}. Note that, we have a strict inequality for a least area surface, and by taking $\epsilon$ sufficiently small, the inequality $2\pi \left| \chi(F) \right| \geq 4\pi\sinh^{2}(\frac{r}{2})$ still holds if $F$ has been homotoped from a least area surface to an $\epsilon$-neighborhood of a one-sided least area surface. This gives us that
$\sqrt{\frac{\left| \chi(F) \right|}{2}} \geq \sinh({\frac{r}{2}})$.
Recall that $\sinh^{-1}(y) = \ln( y + \sqrt{y^{2}+1})$ and that $\sinh(x)$ is an increasing function.  Thus,

\begin{center}
$h(\left| \chi(F) \right|) = 2 \sinh^{-1}(\sqrt{\frac{\left| \chi(F) \right|}{2}}) = 2 \ln ( \sqrt{\frac{\left| \chi(F) \right|}{2}} + \sqrt{\frac{\left| \chi(F) \right|}{2} +1} ) \geq r.$
\end{center}

So, if $\gamma$ has a large enough embedded tubular radius, we will have a contradiction, specifically, if $r > h(\left| \chi(F) \right|)$.  In particular, if $\left|\chi(F) \right| \leq 2$, then $r > 2 \ln(1 + \sqrt{2}) =  h( \left|\chi(F) \right| )$ will provide the necessary area contradiction, and so, $\gamma \cap F = \emptyset$.

\textbf{Case 2:} Every component of $F_{r}$ that intersects $\gamma$ is an annulus.
\\
Suppose $A_{r}$ is an annulus component of $F_{r}$ that intersects $\gamma$.  In this case, the inclusion map $i: A_{r} \rightarrow T_{r}(\gamma)$, induces an injective homomorphism $i_{\ast}: \pi_{1} ( A_{r}) \hookrightarrow \pi_{1} ( T_{r}(\gamma) )$ with $[\alpha] \mapsto [n \cdot \gamma]$ for some $n \in \mathbb{N}$, where $[\alpha]$ is the homotopy class of the core of the annulus $A_{r}$. Now, $[\alpha]$ can be represented by a curve $\alpha$ on a component of $\partial A_{r}$, with $A_{r}$ providing the isotopy between the core and the boundary component. Since $\partial A_{r} \subset \partial T_{r}(\gamma)$, $\alpha$ is isotopic into the boundary torus $\partial T_{r}(\gamma)$, providing a satellite knot of the form $n \cdot \gamma$ on $\partial T_{r}(\gamma)$.

Finally, we show that our topological statement holds, that is, $\gamma$ can be isotoped disjoint from $F$ in both cases. Obviously, if $\gamma \cap F = \emptyset$, then no isotopy needs to even take place. So, suppose $n \cdot \gamma$ is isotopic into $F$.  The proof of case $2$ explains the topology of such a situation.  Specifically, the annuli $\left\{A_{r}^{i}\right\}_{i=1}^{n}$ are boundary parallel to $\partial T_{r}(\gamma)$, and so, could be isotoped disjoint from $\gamma$. If $F_{r}$ consists of multiple annuli that intersect $\gamma$, then we start by isotoping the outermost annuli to the boundary and proceed inward. Equivalently, we could keep $\left\{A_{r}^{i}\right\}_{i=1}^{n}$ fixed (since it is part of our least area surface $F$) and isotope $\gamma$ so that this closed curve is disjoint from $\left\{A_{r}^{i}\right\}_{i=1}^{n}$, and more generally, disjoint from $F$.  
\end{proof}

It is important to note that case $2$ of Proposition \ref{thm:LA_surface_disjoint} is certainly a possiblity and can be an obstruction to a useful lower bound estimate on $A(F)$. Techniques similar to the proof of Theorem \ref{thm:Monotonicity_of_MR} can be used to find a lower bound for $A(F \cap T_{r}(\gamma))$ when every component is an annulus, but the lower bound is of the form $C_{0} \cdot \ell(\gamma)  \cdot \sinh(r)$, where $C_{0} >0$ is a constant. It is possible to put a hyperbolic metric on a given surface $F$ so that a specific geodesic is arbitrarily short and contains an embedded collar of area $2 \ell(\gamma) \cdot \sinh(r)$. So, if $\ell(\gamma)$ is sufficiently short and $\gamma$ actually lies on $F$, then the quantity $C_{0} \cdot \ell(\gamma) \cdot \sinh(r)$ could be too small to be useful for our purposes.


\subsection{Least area surfaces and the length of $\gamma$}
\label{subsec:LA_surfaces_length}

Next, we will examine when $\gamma$ can be isotoped disjoint from $F$ based on the length of $\gamma$.  To do this, we will need to use the Collar Lemma, which essentially says that the shorter the length of a closed geodesic in a hyperbolic $3$-manifold, the larger the embedded tubular neighborhood of that geodesic.  The following qualitative version of the Collar Lemma comes from Meyerhoff \cite{Me}:

\begin{thm}[Collar Lemma]
\label{thm:Collar_lemma}
Let $\gamma \subset M$ be a closed geodesic in a hyperbolic $3$-manifold with (real) length $\ell(\gamma)$.  Suppose $\ell(\gamma) < \frac{\sqrt{3}}{4\pi} \left[\ln ( \sqrt{2} + 1)\right]^{2} \approx 0.107$.  Then there exists an embedded tubular neighborhood around $\gamma$ whose radius $r$ satisfies 
\begin{center}
$\sinh^{2}(r) = \frac{1}{2} \left(\frac{\sqrt{1-2k(\ell(\gamma))}}{k(\ell(\gamma))} - 1\right)$ where $k(x) = \cosh \left( \sqrt{ \frac{4 \pi x}{\sqrt{3}}} \right) - 1$.
\end{center}
\end{thm}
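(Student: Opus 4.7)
The plan is to establish the Collar Lemma via a careful application of Jorgensen's inequality in $\mathrm{PSL}(2,\mathbb{C})$, in the spirit of Meyerhoff's original argument.

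First, I would lift $\gamma$ to a geodesic $\tilde\gamma \subset \mathbb{H}^{3}$ with endpoints $0$ and $\infty$ in the upper half-space model. Let $\phi \in \mathrm{PSL}(2,\mathbb{C})$ be the loxodromic element generating the stabilizer of $\tilde\gamma$, so that $\phi(z) = \lambda^{2} z$ with $\lambda = e^{(\ell(\gamma) + i\theta)/2}$ for some torsion angle $\theta$. Then $\mathrm{tr}^{2}(\phi) - 4 = 4\sinh^{2}(\ell_{\mathbb{C}}(\gamma)/2)$, which is small when $\ell(\gamma)$ is small. Let $r$ denote the maximal embedded tube radius. By maximality, there is some $\psi \in \pi_{1}(M)$ not in $\langle \phi \rangle$ such that $\psi(\tilde\gamma)$ and $\tilde\gamma$ admit a common perpendicular of length at most $2r$.

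Next, I would apply Jorgensen's inequality to the (necessarily non-elementary) subgroup $\langle \phi, \psi \rangle$:
\begin{equation*}
|\mathrm{tr}^{2}(\phi) - 4| + |\mathrm{tr}([\phi,\psi]) - 2| \geq 1.
\end{equation*}
A direct matrix computation shows that $|\mathrm{tr}([\phi,\psi]) - 2|$ can be expressed in terms of the complex translation length of $\phi$ and the hyperbolic distance $d$ between the axes $\tilde\gamma$ and $\psi(\tilde\gamma)$; concretely, if one parametrizes $\psi$ by the geometry of how it moves $\tilde\gamma$, then $|\mathrm{tr}([\phi,\psi]) - 2|$ is controlled by $|\sinh(\ell_{\mathbb{C}}(\gamma)/2)|^{2} \cdot F(d)$ for an explicit hyperbolic function $F$ that decays as $d$ grows. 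The maximality condition $d \leq 2r$ then converts Jorgensen's inequality into a quantitative lower bound on $r$ in terms of $\ell(\gamma)$.

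The remaining step is to optimize. The worst case occurs when orbit points of $\phi$ acting near $\tilde\gamma$ achieve the densest possible packing, which on the (Euclidean) cross-sectional torus of the tube corresponds to a hexagonal lattice; this is exactly where the factor $\sqrt{3}$ (and hence $4\pi/\sqrt{3}$, essentially the reciprocal of the area of the densest circle packing) enters. Combining the densest-packing estimate with the Jorgensen bound yields $k(\ell(\gamma)) := \cosh(\sqrt{4\pi\ell(\gamma)/\sqrt{3}}) - 1$ as the natural quantity measuring how badly $\phi$ can fail to be far from the identity, and solving the resulting quadratic inequality for $\sinh^{2}(r)$ produces exactly
\begin{equation*}
\sinh^{2}(r) \geq \tfrac{1}{2}\left(\tfrac{\sqrt{1 - 2k(\ell(\gamma))}}{k(\ell(\gamma))} - 1\right).
\end{equation*}
The hypothesis $\ell(\gamma) < \tfrac{\sqrt{3}}{4\pi}[\ln(\sqrt{2}+1)]^{2}$ is precisely the range in which $2k(\ell(\gamma)) < 1$, so that the right-hand side is real and positive.

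The main obstacle, as is typical for Collar-Lemma-style arguments, is the packing step: one must rule out the possibility that many distinct translates of $\tilde\gamma$ accumulate close to it without any single translate violating Jorgensen's inequality. Handling this cleanly requires translating the abstract Jorgensen bound into a genuinely geometric statement about how densely the $\phi$-orbit can cluster on a horospherical cross-section, and it is here that the hexagonal-packing constant $\sqrt{3}/(4\pi)$ is not avoidable. Once that packing estimate is in hand, the remainder of the argument is the algebraic manipulation indicated above.
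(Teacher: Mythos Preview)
The paper does not actually prove this theorem: it is quoted verbatim from Meyerhoff \cite{Me} and used as a black box in the proof of Proposition~\ref{thm:LA_surface_disjoint_corelength}. So there is no ``paper's own proof'' to compare against.

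As for your sketch, the overall shape is reasonable but there is a genuine gap in how you pass from J{\o}rgensen's inequality to the hexagonal packing constant. J{\o}rgensen's inequality is a statement about a \emph{single} pair $(\phi,\psi)$, and by itself it yields a bound of the rough form $|\sinh^{2}(\ell_{\mathbb{C}}(\gamma)/2)|\cdot \cosh(2r) \gtrsim 1$; it does not see the full $\langle\phi\rangle$-orbit, and so it cannot by itself produce the factor $\sqrt{3}/(4\pi)$. Meyerhoff's actual argument does not go through J{\o}rgensen. Instead, one observes that the nearest translate $\psi(\tilde\gamma)$ of the axis, together with its images under powers of $\phi$, determines a lattice of ``footprints'' on the boundary of the tube (equivalently, in a $\mathbb{C}$-parameter encoding complex length). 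Discreteness forces these lattice points to stay a definite distance apart, and Zagier's lemma---that for any lattice $\Lambda \subset \mathbb{C}$ there is a nonzero $\omega \in \Lambda$ with $|\cosh\omega - 1|$ bounded in terms of the covolume, with the extremal case the hexagonal lattice---is what produces both the function $k(x)=\cosh\bigl(\sqrt{4\pi x/\sqrt{3}}\bigr)-1$ and the threshold $\frac{\sqrt{3}}{4\pi}[\ln(\sqrt{2}+1)]^{2}$. Your writeup correctly identifies that a packing/lattice optimization is needed, but grafting it onto a single application of J{\o}rgensen is not how the constants arise; you would need to replace the J{\o}rgensen step with the orbit/lattice analysis to recover Meyerhoff's formula.
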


\begin{prop}
\label{thm:LA_surface_disjoint_corelength}
Let $\gamma \subset M$ be a closed geodesic, and let $F$ be a surface in $M$ that is incompressible and $\partial$-incompressible.  Set $g(x) = 2x^{2}+4x+1$. Assume $\frac{\sqrt{1-2k(\ell(\gamma))}}{k(\ell(\gamma))} > g( \left| \chi(F) \right|)$. Then $\gamma$ can be isotoped disjoint from $F$. Furthermore, if $F$ is in almost least area form, then either $\gamma \cap F = \emptyset$ without any isotopy or $n \cdot \gamma$ is isotopic into $F$ for some $n \in \mathbb{N}$. In particular, if $\left|\chi(F) \right| \leq 2$ our result holds whenever $\ell(\gamma) < 0.015$.
\end{prop}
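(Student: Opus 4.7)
The plan is to derive this from Proposition \ref{thm:LA_surface_disjoint} together with the Collar Lemma, by translating the length hypothesis into a tube-radius hypothesis.

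First, I would verify that the hypothesis $\frac{\sqrt{1-2k(\ell(\gamma))}}{k(\ell(\gamma))} > g(|\chi(F)|)$ automatically forces $\ell(\gamma)$ to lie below the Meyerhoff threshold in Theorem \ref{thm:Collar_lemma}, so the Collar Lemma applies and yields an embedded tube $T_r(\gamma)$ with
\[
\sinh^{2}(r) = \tfrac{1}{2}\!\left(\tfrac{\sqrt{1-2k(\ell(\gamma))}}{k(\ell(\gamma))} - 1\right).
\]
Writing $A := \sqrt{1-2k(\ell(\gamma))}/k(\ell(\gamma))$ and $x := |\chi(F)|$, this reads $\sinh^{2}(r) = (A-1)/2$.

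Next comes the purely algebraic step. Proposition \ref{thm:LA_surface_disjoint} needs $r > h(x) = 2\sinh^{-1}(\sqrt{x/2})$, equivalently $\sinh^{2}(r/2) > x/2$. Using the half-angle identity $\sinh^{2}(r) = 4\sinh^{2}(r/2)\cosh^{2}(r/2) = 4\sinh^{2}(r/2)\bigl(1+\sinh^{2}(r/2)\bigr)$, setting $u = \sinh^{2}(r/2)$ and using monotonicity of $u \mapsto u(1+u)$ on $u > 0$, the inequality $u > x/2$ is equivalent to
\[
4u(1+u) > 4 \cdot \tfrac{x}{2}\bigl(1+\tfrac{x}{2}\bigr) = 2x^{2}+4x,
\]
i.e.\ $\sinh^{2}(r) > 2x^{2}+4x$, i.e.\ $(A-1)/2 > 2x^{2}+4x$, i.e.\ $A > 2x^{2}+4x+1 = g(x)$. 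So the hypothesis is exactly equivalent to $r > h(|\chi(F)|)$, and Proposition \ref{thm:LA_surface_disjoint} immediately delivers the conclusion: $\gamma$ can be isotoped disjoint from $F$, and when $F$ is almost least area the intersection pattern is the one described there.

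For the final quantitative claim, when $|\chi(F)|\leq 2$ the worst case is $x=2$, giving $g(2) = 17$. It then suffices to check $A > 17$ whenever $\ell(\gamma) < 0.015$. A direct numerical evaluation shows $k(0.015) \approx 0.0544$, hence $A(0.015) \approx 0.944/0.0544 \approx 17.3 > 17$; since $k$ is increasing in $\ell$ while $A$ is decreasing in $k$, the inequality $A > 17$ holds throughout $\ell(\gamma) < 0.015$. The whole argument is essentially formal once Proposition \ref{thm:LA_surface_disjoint} and the Collar Lemma are in hand; the only real work is the hyperbolic-identity manipulation that matches the function $g(x)=2x^{2}+4x+1$ to the composition of $h$ with the Collar Lemma formula, and checking the numerical threshold $0.015$ — neither step presenting any serious obstacle.
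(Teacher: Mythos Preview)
Your approach is essentially identical to the paper's: apply Meyerhoff's Collar Lemma to convert the length hypothesis into a tube-radius lower bound, then invoke Proposition \ref{thm:LA_surface_disjoint}, with the algebraic identification of $g(x)=2x^2+4x+1$ coming from the double-angle identity for $\sinh$. Two small points: first, your intermediate arithmetic contains a pair of cancelling slips --- one has $4\cdot\tfrac{x}{2}(1+\tfrac{x}{2}) = x^2+2x$, not $2x^2+4x$, and then $(A-1)/2 > x^2+2x$ gives $A > 2x^2+4x+1$ (you effectively doubled once too early and then forgot to double later, landing at the right answer); second, your claim that the hypothesis forces $\ell(\gamma)$ below the Meyerhoff threshold is correct (since $g(|\chi(F)|)\ge 1$ and $\sqrt{1-2k}/k=1$ exactly at the threshold), though the paper simply assumes $\ell(\gamma)<0.107$ rather than deriving it.
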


\begin{proof}
We will use the Collar Lemma to show that if $\ell(\gamma)$ is sufficiently small, then the tube radius $r$ is sufficiently large.   Then Proposition \ref{thm:LA_surface_disjoint} will give us the desired result. So, we need to see when $r > h(\left| \chi(F) \right|) = 2 \sinh^{-1}(\sqrt{\frac{\left| \chi(F) \right|}{2}})$. Assume that $\ell(\gamma) < 0.107$, so the Collar Lemma applies.  Then we have $\sinh^{2}(r) = \frac{1}{2} \left(\frac{\sqrt{1-2k}}{k} - 1\right)$ where $k(\ell(\gamma)) = \cosh \left( \sqrt{ \frac{4 \pi \ell(\gamma)}{\sqrt{3}}} \right) - 1$. Now, $k(\ell(\gamma))$ is an increasing function on $0 < \ell(\gamma) < \infty$ with $k(\ell(\gamma)) \rightarrow 0$ as $\ell(\gamma) \rightarrow 0$, while $\frac{1}{2} \left(\frac{\sqrt{1-2k}}{k} - 1\right)$ is a decreasing function ($0 < k \leq \frac{1}{2}$), which heads to $\infty$ as $k \rightarrow 0$.  So, as $\ell(\gamma) \rightarrow 0$, $\sinh^{2}(r) = \frac{1}{2} \left(\frac{\sqrt{1-2k}}{k} - 1\right) \rightarrow \infty$.

Specifically, we need the following inequality to hold:

\begin{eqnarray*}
r = \sinh^{-1}(\sqrt{\frac{1}{2}(\frac{\sqrt{1-2k}}{k}-1})) & > & 2 \sinh^{-1}(\sqrt{\frac{\left| \chi(F) \right|}{2}}),\\
\frac{1}{2} (\frac{\sqrt{1-2k}}{k}-1) & > &  \sinh^{2}( 2 \sinh^{-1}(\sqrt{\frac{\left| \chi(F) \right|}{2}})), \\
\frac{\sqrt{1-2k}}{k} & > & 2 \sinh^{2}( 2 \sinh^{-1}(\sqrt{\frac{\left| \chi(F) \right|}{2}}))+1. 
\end{eqnarray*}

Note that,
\begin{eqnarray*}
 2 \sinh^{2}( 2 \sinh^{-1}(\sqrt{\frac{\left| \chi(F) \right|}{2}}))+1 & = & 2\sinh^{2}(\sinh^{-1}(2\sqrt{\frac{\left| \chi(F) \right|}{2}}\sqrt{\frac{\left| \chi(F) \right|}{2}+1}))+1 \\
 & = & 	2(2\sqrt{\frac{\left| \chi(F) \right|}{2}}\sqrt{\frac{\left| \chi(F) \right|}{2}+1})^{2}+1 \\
 & = & 2\left| \chi(F) \right|^{2} + 4\left| \chi(F) \right| +1 \\
 & = & g( \left| \chi(F) \right|).
\end{eqnarray*}
 
For the case when $\left|\chi(F) \right| \leq 2$, we just need to check when the inequality 
\begin{center}
$ \left(\frac{\sqrt{1-2k(\ell(\gamma))}}{k(\ell(\gamma))} \right) > g(2) = 17$
\end{center}
is satisfied. This occurs when $\ell(\gamma) < 0.015$, giving the desired result.
\end{proof}


\subsection{Least area surfaces and Dehn filling slopes}
\label{subsec:LA_surfaces_Dehn_filling}

Now, we would like to examine the geometry and topology of $\gamma \cap F$ based on certain Dehn filling slopes.  In order to do this, we need to go over some background on Dehn fillings.

Given a hyperbolic $3$-manifold $M$ with  a cusp corresponding to a torus boundary on $\partial M$, we choose a basis $\left\langle m,l \right\rangle$ for the fundamental group of the torus.  After this choice of basis, we can form the manifold $M\left(p,q\right)$ obtained by doing a $\left(p,q \right)$-Dehn surgery on the cusp, where $\left(p,q \right)$ is a coprime pair of integers.  A \emph{$\left(p,q \right)$-Dehn surgery} maps the boundary of the meridian disk to $s = pm+ql$. Similary, we can form the manifold $M\left((p_{1},q_{1}), \dots, (p_{k}, q_{k})\right)$ by performing a $\left(p_{i},q_{i}\right)$-Dehn surgery on the $i^{th}$ cusp of $M$, for each $i$, $ 1 \leq i \leq k$.

Thurston showed that $M\left((p_{1},q_{1}), \dots, (p_{k}, q_{k})\right)$ is in fact a hyperbolic $3$-manifold for all $\left((p_{1},q_{1}) \dots (p_{k}, q_{k})  \right)$ near $\left(\infty,\dots, \infty\right)$; see \cite{Th}. Following Thurston's work, many people developed techniques to more explicitly understand the change in geometry under Dehn surgery. The work of Hodgson and Kerckhoff \cite{HoKe2}, \cite{HoKe} shows that if the \textit{normalized lengths} of the slopes on which Dehn fillings are performed are sufficiently large, then it is possible to give explicit bounds on the geometry of the filled manifold. Their work will be helpful for us to determine when core geodesics (coming from Dehn filling) can be isotoped disjoint from incompressible surfaces inside of $M\left((p_{1},q_{1}), \dots, (p_{k}, q_{k})\right)$. We now define normalized length.

\begin{defn}[Normalized Length]
\label{defn:NL}
Given a Euclidean torus $T$, the \emph{normalized length of a slope $s = pm + ql$} is defined to be: 
\begin{center}
$\widehat{L}(s) = \widehat{L}((p,q))= \frac{\text{Length}((p,q))}{\sqrt{\text{Area}(T)}}$,
\end{center}
where Length($(p,q)$) is defined to be the length of a geodesic representative of $s$ on $T$. If we are considering multiple slopes, $\left\{s_{i}\right\}_{i=1}^{k}$, then define $\widehat{L}$ by the equation $\frac{1}{\widehat{L}^{2}} = \sum_{i=1}^{k} \frac{1}{\widehat{L}(s_{i})^{2}}$.
\end{defn}

Note that, normalized length is scale invariant and well-defined for cusps of $M$.

We now introduce some functions and terminology needed to understand certain results we will use from \cite{HoKe}. For the rest of this section, $M$ and $N$ will denote hyperbolic $3$-manifolds such that $M = N\left((p_{1},q_{1}), \dots, (p_{k}, q_{k})\right)$.  Each of these Dehn fillings produces a solid torus in $M$ whose core geodesic will be denoted by $\gamma_{i}$. We will use $r_{i}$ to denote the maximal embedded tube radus of $\gamma_{i}$. Section $5.1$ of \cite{HoKe} defines the  \textit{visual area} of the boundary of such an embedded tube and observes that it is equal to $\ell(\gamma_{i})\alpha_{i}$, where $\alpha_{i}$ is the cone angle around $\gamma_{i}$ (see above (25) on page 1068 there). Since $M$ is a manifold, its \textit{total visual area}, i.e., the sum of the visual areas of all tube boundaries, is $A = 2\pi \sum_{i=1}^{k} \ell(\gamma_{i})$.

The following two theorems come from \cite{HoKe}. The first relates the normalized lengths to the tube radii of the core geodesics resulting from Dehn filling, and the second relates these normalized lengths to the total visual area. The functions $f(z)$, $A(z)$, and $I(z)$ used in these theorems are given below. Also, $f(z)$ is formula $43$ on page $1080$ of \cite{HoKe}, and $A(z)$ is given on page $1080$ of \cite{HoKe} (though it is defined in terms of a function $H(z)$ given on page $1079$).

\begin{itemize}
	\item $f(z) = 3.3957(1-z) \exp (-\int_{1}^{z} F(w) dw)$, where $F(w) = \frac{-(1+4w+6w^{2}+w^{4})}{(w+1)(1+w^{2})^{2}}$,
	\item $A(z) = \frac{3.3957z(1-z^{2})}{1+z^{2}}$,
	\item $I(z) = \frac{(2\pi)^{2}}{f(z)}$.
\end{itemize}

\begin{thm}\cite{HoKe}
\label{thm:NLgivesradius}
Suppose that $M$ is obtained from $N$ by Dehn filling along slopes whose normalized lengths satisfy $\widehat{L}  > 7.5832$. If $\widehat{L}^{2} \geq I(z)$, then the tube radius $r_{i}$ of each $\gamma_{i}$ stays larger than $\rho = \tanh^{-1}(z)$.
\end{thm}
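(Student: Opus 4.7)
The plan is to adopt the cone deformation framework of Hodgson and Kerckhoff. View the complete hyperbolic structure on $N$ as a degenerate cone manifold structure on $M$ in which each filled solid torus has been opened up to a rank-two cusp, i.e., the would-be core curve $\gamma_{i}$ carries cone angle $\alpha_{i} = 0$. Then deform through a one-parameter family of hyperbolic cone manifold structures on $M$ in which the cone angles increase simultaneously from $0$ to $2\pi$; at the endpoint we recover the complete smooth hyperbolic structure on $M$. Along this family, track the embedded tube radius $r_{i}(\alpha)$ around each singular locus, and aim to produce the lower bound $r_{i}(2\pi) \geq R = \tanh^{-1}(z)$ purely in terms of the initial normalized lengths $\widehat{L}(s_{i})$ of the filling slopes.

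The infinitesimal generator of this deformation at each stage is a harmonic $1$-form $\omega$ with values in the bundle of infinitesimal hyperbolic isometries, satisfying appropriate boundary conditions on the tubes. Hodge-theoretic arguments on the cone manifold, combined with Weitzenb\"ock-type identities, allow one to split $\omega$ into a standard ``model'' piece, supported in each tube and expressible explicitly in cylindrical coordinates around $\gamma_{i}$, plus a piece supported in the complement whose $L^{2}$ norm is controlled by its boundary values on $\partial T_{r_{i}}(\gamma_{i})$. The boundary data encode the complex length $\ell_{\mathbb{C}}(\gamma_{i})$ together with the rate of change of the cone angle, and translating these into pointwise bounds on the tube boundary yields a system of ordinary differential inequalities for $\frac{d\ell(\gamma_{i})}{d\alpha}$ and $\frac{dr_{i}}{d\alpha}$. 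The functions $A(z)$ and $f(z)$ in the statement emerge as the sharp solutions: $A(z)$ upper-bounds the total visual area $A = 2\pi \sum_{i} \ell(\gamma_{i})$ that a configuration of tubes of radius exactly $\tanh^{-1}(z)$ can support, while $f(z)$ encodes the corresponding $L^{2}$ control on the harmonic form.

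To close the argument, integrate these differential inequalities from the cusp end $z = 1$ (tube of infinite radius) down to $z$, monitoring when the deformation could first fail to keep every tube radius at least $R = \tanh^{-1}(z)$. The ``energy'' of the harmonic form is essentially $(2\pi)^{2}/\widehat{L}^{2}$, inversely proportional to the squared normalized length, and the capacity available to carry this energy across the deformation while preserving a tube of radius $R$ is precisely $f(z) = (2\pi)^{2}/I(z)$. Thus the hypothesis $\widehat{L}^{2} \geq I(z)$ is exactly the condition that the energy fits inside the capacity, forcing $r_{i} \geq R$ for every $i$ throughout the entire deformation. The threshold $\widehat{L} > 7.5832$ is the quantitative requirement ensuring that the full cone deformation from cone angle $0$ to cone angle $2\pi$ exists and is nonsingular, so that no tube has the chance to collapse before the smooth structure on $M$ is reached.

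The principal obstacle lies in the analysis of the harmonic deformation form on the noncompact cone manifold: establishing the Hodge decomposition with the correct decay at cusps and the correct behavior at the singular locus, proving existence and uniqueness of $\omega$ realizing the prescribed change of cone angle, and, most delicately, obtaining pointwise estimates sharp enough to yield the explicit functions $f(z)$ and $A(z)$ rather than unusable universal constants. Once those analytic inputs are in place, the passage from the differential inequality to the integrated statement is essentially a calculus computation matching the constant $7.5832$ to the critical value of $z$ at which the auxiliary function $F(w)$ and its integral degenerate.
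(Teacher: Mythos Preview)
The paper does not prove this theorem at all: it is quoted from Hodgson--Kerckhoff \cite{HoKe} (as a minor variant of their Theorem~5.7), and the paper simply points to the two paragraphs following that theorem in \cite{HoKe} to justify why the tube radius bound $R_{0}=\tanh^{-1}(1/\sqrt{3})$ there may be replaced by the variable bound $R=\tanh^{-1}(z)$ once $\widehat{L}^{2}\geq I(z)$. So there is no ``paper's own proof'' to compare against; the author is using this as a black box.

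Your sketch is a faithful high-level summary of the Hodgson--Kerckhoff cone-deformation program, and in that sense it is the correct route. But you should be aware that what you have written is an outline, not a proof: every substantive step (the Hodge theory for harmonic forms on cone manifolds with the right boundary behavior, the derivation of the explicit ODE system and the functions $f(z)$, $A(z)$, $F(w)$, the packing/visual-area estimates, and the global existence of the deformation up to cone angle $2\pi$) is itself a theorem in \cite{HoKe} or its predecessor. In particular, the identification of $7.5832$ with the threshold for nonsingularity of the deformation, and the precise matching of $I(z)=(2\pi)^{2}/f(z)$ to the tube-radius control, are outputs of those papers rather than things one can supply in a paragraph. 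For the purposes of this paper, citing \cite{HoKe} is the intended and appropriate move.
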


Theorem \ref{thm:NLgivesradius} is a slightly different version of Theorem $5.7$ from \cite{HoKe}. In \cite[Theorem 5.7]{HoKe}, the conclusion states that the tube radius of each $\gamma_{i}$ stays larger than a fixed radius $R_{0} = \tanh(\frac{1}{\sqrt{3}})$. In our version, the tube radius parameter is not fixed, but rather, a lower bound for it is given in terms of $\rho$. The two paragraphs preceding Theorem $5.7$ in \cite{HoKe} justify this change. Specifically, the bottom of page $1080$ and the top of page $1081$ state that the tube radius will be greater than or equal to $\rho$, provided that $\tanh(\rho)$ is greater than a particular minimum value: $\tanh(R_{0})$. Thus, to guarantee a larger tube radius, we must be able to choose larger values of $z$ (and hence larger values of $\rho$ too). This is accomplished by choosing  $\widehat{L}^{2} \geq I(z)$.

\begin{thm}\cite[Theorem 5.12]{HoKe}
\label{thm:NLgiveslength}
Suppose that $M$ is obtained from $N$ by Dehn filling along slopes whose normalized lengths satisfy $\widehat{L} > 7.5832$. Then the total visual area $A$ satisfies $A \leq A(z)$ where the variable $z$ is determined by $f(z) = \frac{(2\pi)^{2}}{\widehat{L}^{2}}$.
\end{thm}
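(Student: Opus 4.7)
The plan is to prove this via the cone deformation machinery of Hodgson--Kerckhoff. The core idea is to interpolate between the complete cusped manifold $N$ and the filled manifold $M$ through a one-parameter family of hyperbolic cone manifolds $M_{\alpha}$ whose singular locus is the union of the cores $\gamma_{i}$, with cone angles $\alpha_{i}$ varying from $0$ (corresponding to the complete cusped $N$) up to $2\pi$ (corresponding to the smooth filling $M$). Throughout this deformation I would track how the total visual area $A = 2\pi \sum \ell(\gamma_{i})$ of the tube boundaries changes with the cone angles, and then use the hypothesis on the normalized length $\widehat{L}$ to control the terminal value.

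First I would encode the infinitesimal deformation as a harmonic traceless, divergence-free symmetric $2$-tensor (a harmonic strain field) representing the first-order change in the hyperbolic metric as the cone angles vary. Following Hodgson--Kerckhoff, this strain field splits as an explicit \emph{model} deformation supported near the singular locus plus a \emph{correction} term whose weighted $L^{2}$ norm on the complement of the embedded tube $T_{r_{i}}(\gamma_{i})$ can be bounded in terms of the tube radius $r_{i}$. Setting $z = \tanh(r)$ and integrating by parts on the tube boundary, the boundary terms yield a coupled system of ODEs for $\ell(\gamma_{i})$, $\alpha_{i}$, and $r_{i}$ with respect to the deformation parameter; the function
\[
f(z) \;=\; 3.3957 \, (1-z)\exp\!\Bigl(-\!\int_{1}^{z} F(w)\,dw\Bigr)
\]
and the companion $A(z) = 3.3957 \, z(1-z^{2})/(1+z^{2})$ arise precisely as solutions of this system after eliminating the other variables.

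Next I would integrate these differential inequalities as the cone angles sweep from $0$ up to $2\pi$. The relation $f(z) = (2\pi)^{2}/\widehat{L}^{2}$ implicitly defines the value of $z$ reached at the end of the deformation, and the monotonicity of $f$ on the admissible range guarantees that $z$ is well-defined. The differential inequality governing the total visual area then yields $A \leq A(z)$ at cone angle $2\pi$, which is the desired conclusion. The lower threshold $\widehat{L} > 7.5832$ ensures that $z$ lies in the range where the harmonic deformation estimates remain valid and the embedded tube around each $\gamma_{i}$ does not collapse during the interpolation.

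The main obstacle is the weighted $L^{2}$ estimate on the correction term: proving that the harmonic strain field decomposes into an explicit model plus a genuinely small perturbation requires a careful spectral analysis of the relevant Laplacian on the tube boundary, and this is where the specific numerical constants (including $7.5832$ and the coefficient $3.3957$ appearing in $f$ and $A$) originate. Since our statement is quoted verbatim as Theorem 5.12 of \cite{HoKe}, the practical plan is to invoke their result directly once we have verified that the normalized-length hypothesis is met, rather than reproducing the entire cone-deformation apparatus.
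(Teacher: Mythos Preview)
Your proposal is correct, and your final sentence matches the paper exactly: this theorem is quoted verbatim from \cite[Theorem 5.12]{HoKe} and the paper supplies no proof of its own, simply invoking the result as a black box. The cone-deformation sketch you give beforehand is a faithful outline of the Hodgson--Kerckhoff argument, but it goes beyond anything the present paper does; here the statement is used, not re-derived.
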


The following proposition explicitly relates the normalized length of Dehn fillings to the geometry of the resulting core geodesics.

\begin{prop}
\label{thm:LA_surface_disjoint_conedef}
Suppose $M = N\left((p_{1},q_{1}), \dots, (p_{k}, q_{k})\right)$. Let $\left\{\gamma_{i}\right\}_{i=1}^{k} \subset M$ denote the set of closed geodesics which come from the cores of the solid tori obtained from Dehn filling cusps of $N$, and let $r_{i}$ denote the maximal embedded tube radius of $\gamma_{i}$. 
\begin{itemize}
\item If for each $i = 1, \dots, k$ we have $\widehat{L}((p_{i}, q_{i})) \geq 14.90\sqrt{k}$, then $r_{i} > 2 \ln(1 + \sqrt{2})$.
\item If for each $i = 1, \dots, k$ we have $\widehat{L}((p_{i}, q_{i})) \geq 20.76\sqrt{k}$, then $\ell(\gamma_{i}) < 0.015$.
\end{itemize}
\end{prop}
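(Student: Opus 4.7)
The plan is to reduce both statements to the Hodgson--Kerckhoff results already cited (Theorems 3.5 and 3.6), using the multi-cusp definition of normalized length to convert a uniform lower bound on each individual slope into a lower bound on the combined quantity $\widehat{L}$.

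First I would handle the multi-slope bookkeeping. Suppose $\widehat{L}((p_i,q_i)) \geq C\sqrt{k}$ for every $i$, where $C$ is either $14.90$ or $20.76$. Then
\[
\frac{1}{\widehat{L}^{2}} \;=\; \sum_{i=1}^{k} \frac{1}{\widehat{L}((p_i,q_i))^{2}} \;\leq\; \sum_{i=1}^{k} \frac{1}{C^{2}k} \;=\; \frac{1}{C^{2}},
\]
so $\widehat{L} \geq C$. In particular the hypothesis $\widehat{L} > 7.5832$ of Theorems 3.5 and 3.6 is satisfied in both cases.

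For the first bullet, I want to apply Theorem 3.5 with the target tube radius $R = 2\ln(1+\sqrt{2})$. The corresponding value is
\[
z \;=\; \tanh(R) \;=\; \tanh\bigl(2\ln(1+\sqrt{2})\bigr) \;=\; \tfrac{2\sqrt{2}}{3},
\]
which I obtain from $e^{R}=3+2\sqrt{2}$ and $e^{-R}=3-2\sqrt{2}$. Theorem 3.5 yields $r_i > R$ provided $\widehat{L}^{2} \geq I(z) = (2\pi)^{2}/f(z)$. Thus the only remaining task is the numerical verification that $14.90^{2} \geq I\bigl(\tfrac{2\sqrt{2}}{3}\bigr)$, i.e.\ that $f\bigl(\tfrac{2\sqrt{2}}{3}\bigr) \geq (2\pi)^{2}/14.90^{2}$. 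I would do this by directly evaluating
\[
f(z) \;=\; 3.3957\,(1-z)\,\exp\!\left(-\!\int_{1}^{z} F(w)\,dw\right), \qquad F(w) = \frac{-(1+4w+6w^{2}+w^{4})}{(w+1)(1+w^{2})^{2}},
\]
numerically at $z=\tfrac{2\sqrt{2}}{3}$ using any standard quadrature, and comparing with the explicit constant $(2\pi)^{2}/14.90^{2}$.

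For the second bullet, I would use Theorem 3.6 to bound the total visual area. With $\widehat{L}\geq 20.76$, let $z$ be the (unique, since $f$ is monotone on the relevant range) solution of $f(z) = (2\pi)^{2}/\widehat{L}^{2}$; Theorem 3.6 then gives $A \leq A(z)$. Since $M$ is a manifold, $A = 2\pi\sum_{i=1}^{k}\ell(\gamma_i)$, so in particular
\[
\ell(\gamma_i) \;\leq\; \frac{A}{2\pi} \;\leq\; \frac{A(z)}{2\pi} \qquad \text{for every } i.
\]
I therefore only need the numerical inequality $A(z)/(2\pi) < 0.015$, i.e.\ $A(z) < 0.0942\ldots$, where $z$ is determined by $f(z) = (2\pi)^{2}/20.76^{2}$. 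Because $f$ is decreasing and $A$ is increasing in $z$ on the relevant interval, this reduces to one more direct numerical check, using the formulas for $f(z)$ above and $A(z)=3.3957\,z(1-z^{2})/(1+z^{2})$. The constants $14.90$ and $20.76$ were presumably chosen to make these two checks tight, so the main (really, only) obstacle is carrying out the two numerical evaluations carefully; both reductions from Theorems 3.5 and 3.6 are otherwise immediate.
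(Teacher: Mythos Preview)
Your proposal is correct and follows exactly the paper's approach: convert the uniform per-slope bound into a bound on the combined $\widehat{L}$ via $\frac{1}{\widehat{L}^{2}}=\sum_i \widehat{L}(s_i)^{-2}$, then invoke Theorems~3.5 and~3.6 and verify the two numerical thresholds (the paper records $14.90^{2}=222.01\geq I(z)$ at $z=\tanh(2\ln(1+\sqrt{2}))$, which is your $z=2\sqrt{2}/3$, and $A(z)\leq 2\pi(0.015)$ at $\widehat{L}=20.76$).

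One small slip worth fixing: on the relevant interval (large $\widehat{L}$, hence $z$ close to~$1$) the function $A(z)=3.3957\,z(1-z^{2})/(1+z^{2})$ is \emph{decreasing}, not increasing---its maximum occurs at $z^{2}=\sqrt{5}-2\approx 0.236$. With $f$ decreasing this is precisely what makes the visual-area bound improve as $\widehat{L}$ grows, so your reduction to the single numerical check at $\widehat{L}=20.76$ is valid, but for the opposite monotonicity reason from the one you state; as written, your claimed directions would not justify checking only the endpoint.
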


\begin{proof}
For the first bullet, we use Theorem \ref{thm:NLgivesradius} to guarantee each tube radius $r_{i}$ is sufficiently large by making each normalized length $\widehat{L}((p_{i}, q_{i}))$ sufficiently large. Specifically, we require $\widehat{L}^{2} \geq I(z)$, for $z = \tanh(2 \ln(1 + \sqrt{2}))$ to guarantee that the tube radius of each $\gamma_{i}$ is at least $2(\ln(1+\sqrt{2}))$. Since for each $i = 1, \dots, k$ we have $\widehat{L}((p_{i}, q_{i})) \geq 14.90\sqrt{k}$, it follows that 
\begin{center}
	$\frac{1}{\widehat{L}^{2}} = \sum_{i=1}^{k} \frac{1}{\widehat{L}(p_{i}, q_{i})^{2}}  \leq (k) (\frac{1}{14.90 \sqrt{k}})^{2} = \frac{1}{222.01}$.	
\end{center} 
Thus, $\widehat{L}^{2} \geq 222.01$. Doing the necessary algebra reveals that $222.01 \geq I(z)$ when 
  $z =  \tanh(2 \ln(1 + \sqrt{2}))$, giving the desired result. 

Now we consider the second bullet. For the filled manifold $M$, we have that the total visual area $A = 2\pi \sum_{i=1}^{k} \ell(\gamma_{i})$. In our case, we want each $\ell(\gamma_{i}) < 0.015$, which will certainly be true if $\sum_{i=1}^{k} \ell(\gamma_{i}) < 0.015$. Thus, if $A \leq 2\pi(0.015)$ then each geodesic $\gamma_{i}$ will be sufficiently short. By Theorem \ref{thm:NLgiveslength}, we know that $A \leq A(z) = \frac{3.3957z(1-z^{2})}{1+z^{2}}$, where the variable $z$ is determined by the equation $f(z) = \frac{(2\pi)^{2}}{\widehat{L}}$. Thus, we need to choose our $\widehat{L}((p_{i},q_{i}))$ sufficiently large so that $z$ satisfies $A(z) \leq 2\pi(0.015)$. Doing some algebra yields the following.
\begin{center}
$\widehat{L} = \sqrt{\frac{(2\pi)^{2}}{f(z)}} = \sqrt{\frac{(2\pi)^{2} \exp (\int_{1}^{z} F(w) dw)}{3.3957(1-z)}}$. 
\end{center}
Choosing each $\widehat{L}((p_{i}, q_{i})) \geq 20.76\sqrt{k}$ results in $A(z) \leq 2\pi(0.015)$, as needed. 
\end{proof}

Either of these conditions will guarantee that any such core geodesics $\gamma_{i}$ can be isotoped disjoint from an incompressible surface $F$ with $\left|\chi(F) \right| \leq 2$. This comes from combining Proposition \ref{thm:LA_surface_disjoint_conedef} with Proposition \ref{thm:LA_surface_disjoint} in the first case and Proposition \ref{thm:LA_surface_disjoint_corelength} in the second case, respectively. However, while the lower bound on normalized length is smaller for the first bullet, in certain applications we will actually want to guarantee that not only our geodesics can be isotoped disjoint from $F$, but also, these geodesics are sufficiently short. This is why we include the second condition. These results are summarized in Corollary \ref{cor:disjointgeo} in the next section.


\subsection{Summary of conditions}
\label{subsec:summary}

We now summarize the conditions under which $\gamma$ can be isotoped disjoint from $F$. This will be used in the proof of Theorem \ref{thm:mutationrep} and its corollaries. 

\begin{thm}
\label{thm:gammasep}
Let $M$ be a hyperbolic manifold with $F \subset M$ a surface that is incompressible and $\partial$-incompressible. Let $\gamma \subset M$ be a closed geodesic with embedded tubular radius $r$. Assume
\begin{enumerate}
\item $r> h( \left|\chi(F) \right| )$, or
\item $\frac{\sqrt{1-2k(\ell(\gamma))}}{k(\ell(\gamma))} > g( \left| \chi(F) \right|)$.
\end{enumerate}

Then $\gamma$ can be isotoped disjoint from $F$. Furthermore, if $F$ is in almost least area form, then either $\gamma \cap F = \emptyset$ without any isotopy or $n \cdot \gamma$ is isotopic into $F$ for some $n \in \mathbb{N}$.
\end{thm}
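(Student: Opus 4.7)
The plan is to observe that Theorem \ref{thm:gammasep} is not really a new result but rather a packaging of Propositions \ref{thm:LA_surface_disjoint} and \ref{thm:LA_surface_disjoint_corelength}, whose hypotheses are exactly the two bulleted conditions and whose conclusions agree verbatim with what is being claimed. So the proof proposal is essentially a two-case argument that simply invokes each of those propositions in turn.

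First I would handle condition (1). Proposition \ref{thm:LA_surface_disjoint} states precisely that if $r > h(|\chi(F)|)$ (where $h(x) = 2\sinh^{-1}(\sqrt{x/2})$), then $\gamma$ can be isotoped disjoint from $F$, and moreover if $F$ is in almost least area form, then either $\gamma \cap F = \emptyset$ with no isotopy or $n\cdot \gamma$ is isotopic into $F$ for some $n \in \mathbb{N}$. This is the Case 1/Case 2 dichotomy from the proof of that proposition, coming from the fact that each component of $F \cap T_r(\gamma)$ is either a disk (ruled out by a Gauss--Bonnet plus mass ratio area comparison when $r$ is large) or a boundary-parallel annulus (which can be isotoped off $\gamma$).

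Next I would handle condition (2). Proposition \ref{thm:LA_surface_disjoint_corelength} says that if $\frac{\sqrt{1-2k(\ell(\gamma))}}{k(\ell(\gamma))} > g(|\chi(F)|) = 2|\chi(F)|^{2} + 4|\chi(F)| + 1$, then the same conclusions hold. Internally that proposition invokes the Collar Lemma (Theorem \ref{thm:Collar_lemma}) to upgrade the length bound to a tube radius bound satisfying condition (1), and then appeals to Proposition \ref{thm:LA_surface_disjoint}, so nothing further needs to be said beyond citing the proposition.

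There is essentially no obstacle here — all the real analytic work (the monotonicity of the mass ratio, the area-decreasing nearest point projection, the area lower bound on a least area disk in a ball, and the Collar Lemma reduction) has been done already. The only thing to verify is that the quantifiers line up: in both cited propositions the hypothesis on $F$ (incompressible and $\partial$-incompressible) and on $\gamma$ (closed geodesic with embedded tubular radius $r$) match the hypothesis stated here, so applying either proposition under the corresponding bullet immediately yields the conclusion. Therefore the proof reduces to a single sentence per case: under (1) apply Proposition \ref{thm:LA_surface_disjoint}; under (2) apply Proposition \ref{thm:LA_surface_disjoint_corelength}.
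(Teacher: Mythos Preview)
Your proposal is correct and matches the paper's approach exactly: the paper's entire proof is the single sentence ``Combine Proposition \ref{thm:LA_surface_disjoint} and Proposition \ref{thm:LA_surface_disjoint_corelength}.'' Your write-up actually supplies more detail than the paper does, but the logical content is identical.
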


\begin{proof}
Combine Proposition \ref{thm:LA_surface_disjoint} and Proposition \ref{thm:LA_surface_disjoint_corelength}. 
\end{proof}

Plugging in $\left|\chi(F) \right| \leq 2$ gives the following immediate corollary.

\begin{cor}
\label{cor:disjointexplicit}
Let $M$ be a hyperbolic manifold with $F \subset M$ a surface that is incompressible and $\partial$-incompressible with $\left|\chi(F) \right| \leq 2$. Let $\gamma \subset M$ be a closed geodesic with embedded tubular radius $r$. Assume
\begin{enumerate}
\item $r> 2 \ln (1 + \sqrt{2})$, or
\item $\ell(\gamma) < 0.015$.
\end{enumerate}

Then $\gamma$ can be isotoped disjoint from $F$. Furthermore, if $F$ is in almost least area form, then either $\gamma \cap F = \emptyset$ without any isotopy or $n \cdot \gamma$ is isotopic into $F$ for some $n \in \mathbb{N}$.
\end{cor}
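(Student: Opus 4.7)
The plan is to derive Corollary \ref{cor:disjointexplicit} as a direct specialization of Theorem \ref{thm:gammasep} by substituting the bound $\left|\chi(F)\right| \leq 2$ into the two hypotheses and carrying out the resulting numerical computations. Since Theorem \ref{thm:gammasep} already supplies the conclusion (isotoping $\gamma$ off $F$, and the dichotomy in the almost least area case), essentially all that remains is to verify that the two numerical hypotheses in the corollary imply the corresponding inequalities in the theorem.

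First, I would handle condition $(1)$. Recall that $h(x) = 2\sinh^{-1}(\sqrt{x/2})$ is strictly increasing in $x \geq 0$, so in the range $\left|\chi(F)\right| \leq 2$ the largest value of $h(\left|\chi(F)\right|)$ is attained at $\left|\chi(F)\right| = 2$, giving
\[
h(2) \;=\; 2\sinh^{-1}(1) \;=\; 2\ln(1+\sqrt{2}).
\]
Therefore the hypothesis $r > 2\ln(1+\sqrt{2})$ implies $r > h(\left|\chi(F)\right|)$ for every allowable $F$, and the first case of Theorem \ref{thm:gammasep} applies verbatim.

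Next, for condition $(2)$, the polynomial $g(x) = 2x^2 + 4x + 1$ is also increasing on $x \geq 0$, so its maximum over $\left|\chi(F)\right| \leq 2$ is $g(2) = 17$. Thus it suffices to check that $\ell(\gamma) < 0.015$ forces
\[
\frac{\sqrt{1-2k(\ell(\gamma))}}{k(\ell(\gamma))} \;>\; 17, \qquad k(x) = \cosh\!\Bigl(\sqrt{4\pi x/\sqrt{3}}\Bigr) - 1.
\]
Here $k$ is continuous and strictly increasing on $[0,\infty)$ with $k(0)=0$, and the function $u \mapsto \sqrt{1-2u}/u$ is strictly decreasing on $(0,1/2]$. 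Composing, the left-hand side above is a strictly decreasing function of $\ell(\gamma)$, so it suffices to evaluate (or bound) it at the single point $\ell(\gamma)=0.015$. A direct numerical check at $\ell(\gamma)=0.015$ yields $k(\ell(\gamma)) \approx 0.057$, and then $\sqrt{1-2k}/k \approx 17$, so the strict inequality holds for all $\ell(\gamma) < 0.015$. Applying the second case of Theorem \ref{thm:gammasep} then gives the conclusion.

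There is no real obstacle here beyond the monotonicity observations and the small numerical verification; the content is entirely the specialization $\left|\chi(F)\right| \leq 2$, which captures exactly the Conway-sphere case ($\chi = -2$) relevant to the paper's main construction. The mildly delicate step is confirming that the threshold $0.015$ in the statement is actually compatible with the functional inequality $\sqrt{1-2k}/k > 17$, which amounts to solving $\sqrt{1-2k}/k = 17$ for $k$ and inverting $k(\ell(\gamma))$; this is a one-variable computation and poses no conceptual difficulty.
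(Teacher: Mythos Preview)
Your proposal is correct and matches the paper's approach exactly: the paper states only ``Plugging in $\left|\chi(F)\right| \leq 2$ gives the following immediate corollary,'' since the numerical verifications $h(2)=2\ln(1+\sqrt{2})$ and $\ell(\gamma)<0.015 \Rightarrow \sqrt{1-2k}/k > 17$ were already carried out in Propositions~\ref{thm:LA_surface_disjoint} and~\ref{thm:LA_surface_disjoint_corelength}. Your write-up simply makes these specializations explicit again, which is fine.
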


For our applications, we will mainly be concerned with closed geodesics that are the core geodesics coming from Dehn fillings and surfaces $F_{i}$ with $\left|\chi(F_{i}) \right| \leq 2$. Thus, the following corollary will be useful, which comes from combining Corollary \ref{cor:disjointexplicit} with Proposition \ref{thm:LA_surface_disjoint_conedef}.

\begin{cor}
\label{cor:disjointgeo}
Suppose $M = N\left((p_{1},q_{1}), \dots, (p_{k}, q_{k})\right)$ and $F \subset M$ a surface that is incompressible and $\partial$-incompressible with $\left|\chi(F) \right| \leq 2$. Let $\left\{\gamma_{i}\right\}_{i=1}^{k} \subset M$ denote the core geodesics coming from Dehn filling cusps of N, each with embedded tube radius $r_{i}$.  

\begin{enumerate}
\item If for each $i = 1, \dots, k$ we have that $\widehat{L}((p_{i}, q_{i})) \geq 14.90\sqrt{k}$, then each $\gamma_{i}$ can be isotoped disjoint from $F$ and each $r_{i} > 2 \ln(1 + \sqrt{2})$.
\item If for each $i = 1, \dots, k$ we have that $\widehat{L}((p_{i}, q_{i})) \geq 20.76\sqrt{k}$, then in addition each $\ell(\gamma_{i}) < 0.015$.
\end{enumerate}

Furthermore, if $F$ is in almost least area form, then either $\gamma_{i} \cap F = \emptyset$ without any isotopy or $n \cdot \gamma_{i}$ is isotopic into $F$ for some $n \in \mathbb{N}$.
\end{cor}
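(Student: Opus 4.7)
The plan is to stitch together the two earlier propositions: Proposition \ref{thm:LA_surface_disjoint_conedef}, which converts a lower bound on normalized length into either a lower bound on the tube radius $r_i$ or an upper bound on the length $\ell(\gamma_i)$; and Corollary \ref{cor:disjointexplicit}, which uses exactly those two geometric conditions to produce an isotopy of $\gamma_i$ off of $F$ (and the dichotomy on $\gamma_i \cap F$ when $F$ is almost least area). Since $|\chi(F)| \leq 2$ is assumed throughout, the explicit numerical thresholds $2\ln(1+\sqrt{2})$ and $0.015$ from Corollary \ref{cor:disjointexplicit} are the relevant targets.

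For the first bullet, I would apply the first bullet of Proposition \ref{thm:LA_surface_disjoint_conedef} to the hypothesis $\widehat{L}((p_i,q_i)) \geq 14.90\sqrt{k}$ to conclude that each core geodesic $\gamma_i$ has embedded tube radius $r_i > 2\ln(1+\sqrt{2})$. This is precisely condition (1) of Corollary \ref{cor:disjointexplicit}, applied one geodesic at a time with the same ambient surface $F$, so each $\gamma_i$ can be isotoped disjoint from $F$.

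For the second bullet, the hypothesis $\widehat{L}((p_i,q_i)) \geq 20.76\sqrt{k}$ is strictly stronger than the hypothesis of the first bullet (since $20.76 > 14.90$), so the tube-radius conclusion $r_i > 2\ln(1+\sqrt{2})$ carries over for free. The second bullet of Proposition \ref{thm:LA_surface_disjoint_conedef} then upgrades this by also yielding $\ell(\gamma_i) < 0.015$ for each $i$. Either geometric condition alone is enough to invoke Corollary \ref{cor:disjointexplicit} and isotope each $\gamma_i$ disjoint from $F$, which gives the "in addition" phrasing of the statement.

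For the final clause about almost least area surfaces, I would simply observe that Corollary \ref{cor:disjointexplicit} already records the stronger geometric conclusion in that setting: either $\gamma_i \cap F = \emptyset$ on the nose, or $n\cdot\gamma_i$ is isotopic into $F$ for some $n \in \mathbb{N}$. Since the hypotheses of Corollary \ref{cor:disjointexplicit} are verified under either bullet above, the same dichotomy applies to each $\gamma_i$. There is no real obstacle here — every nontrivial geometric input (the Hodgson–Kerckhoff cone deformation bounds, the Collar Lemma, the mass-ratio area estimate) has already been packaged into the cited results; the work of this corollary is purely bookkeeping to line up the numerical constants with $|\chi(F)| \leq 2$.
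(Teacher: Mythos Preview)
Your proposal is correct and matches the paper's own proof exactly: the paper states that this corollary ``comes from combining Corollary \ref{cor:disjointexplicit} with Proposition \ref{thm:LA_surface_disjoint_conedef},'' which is precisely the two-step bookkeeping you outline.
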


Combining the results from this section gives a proof of Theorem \ref{thm:main} from the introduction. 

\begin{proof}[Proof of Theorem \ref{thm:main}] 
Corollary \ref{cor:disjointexplicit} takes care of the first two cases of Theorem \ref{thm:main}, while Corollary \ref{cor:disjointgeo} takes care of the third case by considering Dehn filling a single cusp. 
\end{proof}


\section{Hyperelliptic surfaces and mutations that preserve geodesics}
\label{subsec:symmsurf_and_mut}

In this section, we will prove that mutating along hyperelliptic surfaces inside hyperbolic $3$-manifolds preserves the initial (complex) length spectrum. In what follows, let $S_{g,n}$ denote a surface of genus $g$ and $n$ boundary components.

Recall that a hyperelliptic surface $S$ is a surface that admits at least one non-trivial involution $\mu$ of $S$ so that $\mu$ fixes every isotopy class of curves in $S$.  Note that, the surfaces $S_{2,0}$, $S_{1,2}$, $S_{1,1}$, $S_{0,3}$, and $S_{0,4}$ are always hyperelliptic, regardless of their hyperbolic structures.  Also, these are all surfaces with Euler characteristic $-1$ or $-2$.  For our constructions in Section \ref{sec:RT_and_PK}, we will examine $4$-punctured spheres that arise in hyperbolic knot complements.  An $S_{0,4}$ in a knot complement is called a \emph{Conway sphere}.

\begin{figure}[ht]
\includegraphics[scale=0.50]{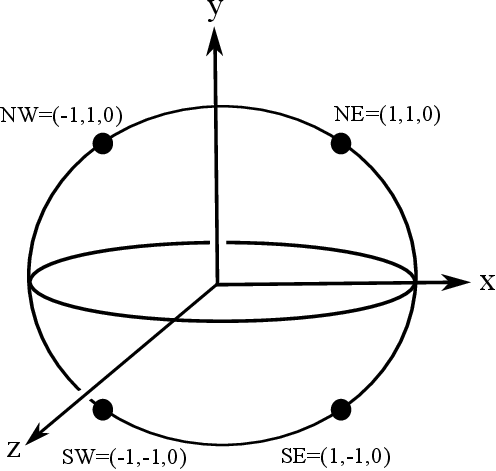}
\caption{A standard Conway sphere.}
\label{Conwaysphere}
\end{figure}

For a Conway sphere there are three hyperelliptic (orientation preserving) involutions, given by $180^{\circ}$ rotations about the $x$-axis, $y$-axis, and $z$-axis, respectively, as shown in figure \ref{Conwaysphere}. 

\begin{defn}[Mutation]
\label{def:Mutation}
A \emph{mutation} along a hyperelliptic surface $S$ in a $3$-manifold $M$ is the process of cutting $M$ along $S$ and then regluing by one of the nontrivial involutions of $S$ to obtain the $3$-manifold $M^{\mu}$. If $K$ is a knot in $\mathbb{S}^{3}$ with a Conway sphere $S$, then cutting $(S^{3}, K)$ along $(S, S \cap K)$ and regluing by a mutation, $\mu$, yields a knot $K^{\mu} \subset S^{3}$. 
\end{defn}

Corollary \ref{cor:disjointexplicit} will help us determine a lower bound on the number of geodesic lengths preserved under mutation. To do this, we first need to see how representations of $\pi_{1}(M$) and $\pi_{1}(M^{\mu})$ are related as amalgamated products and HNN-extensions along representations of $\pi_{1}(F)$. In fact, Kuessner in \cite{Ku} gives a different proof of Ruberman's result about mutations and volume that uses these decompositions of representations of $\pi_{1}(M$) and $\pi_{1}(M^{\mu})$ along with the Maskit combination theorem and homological arguments.

The following theorem due to Ruberman characterizes an essential feature of a hyperelliptic surface $(F, \mu)$.  

\begin{thm}\cite[Theorem $2.2$]{Ru}
\label{thm:mutationrep}
Let $(F, \mu)$ be a hyperelliptic surface, and let $\rho_{F}: \pi_{1}(F) \rightarrow \text{PSL}(2, \mathbb{C})$ be a discrete and faithful representation taking cusps of $F$ to parabolics. Then there exists  $\beta \in \text{PSL}(2, \mathbb{C})$ such that $\rho_{F}  \mu_{\ast} = \beta  \rho_{F} \beta^{-1}$. 
\end{thm}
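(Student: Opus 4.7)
The plan is to exhibit $\beta$ by proving that $\rho_{F}$ and $\rho_{F}\circ\mu_{\ast}$ are conjugate as representations of $\pi_{1}(F)$ into $\text{PSL}(2,\mathbb{C})$; any conjugator will then serve as the desired $\beta$. The strategy has two parts: first show that the two representations have identical characters, and then invoke character-variety rigidity to upgrade equality of characters to conjugacy of representations.

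For the character step, I would unpack the hypothesis that $\mu$ fixes every unoriented isotopy class of closed curves in $F$. Translating to $\pi_{1}(F,\ast)$, this forces $\mu_{\ast}(\gamma)$ to be conjugate within $\pi_{1}(F)$ to $\gamma$ or $\gamma^{-1}$ for every $\gamma$, the $\pm 1$ ambiguity absorbing the base-point choice and any orientation-reversal of individual curves by $\mu$. Applying $\rho_{F}$, the elements $\rho_{F}(\mu_{\ast}\gamma)$ and $\rho_{F}(\gamma)^{\pm 1}$ are therefore conjugate in $\text{PSL}(2,\mathbb{C})$; since $\text{tr}^{2}(A) = \text{tr}^{2}(A^{-1})$ for any $A \in \text{SL}(2,\mathbb{C})$, the squared traces match, and so the (squared-trace) characters of $\rho_{F}$ and $\rho_{F}\circ\mu_{\ast}$ coincide on every element of $\pi_{1}(F)$. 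For the rigidity step, both representations are discrete, faithful, and non-elementary (because $\mu_{\ast}$ is an automorphism of $\pi_{1}(F)$, so post-composing preserves each property, and the image of $\rho_{F}$ is non-elementary for any $F$ of the Euler characteristics under consideration). The classical theorem that two non-elementary representations of a finitely generated group into $\text{PSL}(2,\mathbb{C})$ with identical character are conjugate by an element of $\text{PSL}(2,\mathbb{C})$ then produces $\beta$ satisfying $\rho_{F}\circ\mu_{\ast} = \beta\,\rho_{F}(\cdot)\,\beta^{-1}$. The hypothesis that cusps of $F$ map to parabolics is automatically respected by this $\beta$, since parabolicity of an element is detected by trace $\pm 2$.

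The main obstacle I anticipate is the care required in moving between $\text{SL}(2,\mathbb{C})$ and $\text{PSL}(2,\mathbb{C})$. The rigidity statement is cleanest in $\text{SL}$, where the trace is literally well-defined; in $\text{PSL}$ one must either select compatible $\text{SL}(2,\mathbb{C})$ lifts of both representations (the obstruction lying in $H^{2}(\pi_{1}(F);\mathbb{Z}/2)$, which can be handled explicitly for the small-complexity surface groups at hand), or argue geometrically. The geometric route is to use that a non-elementary image has at least three limit points on $\partial \mathbb{H}^{3}$, and the pairing up of fixed-point pairs of $\rho_{F}(\gamma)$ with those of $\rho_{F}(\mu_{\ast}\gamma)$ for a few loxodromic generators uniquely determines a M\"obius transformation $\beta$ (three points suffice); one then verifies that this $\beta$ conjugates the full representation by checking it matches on the generating set whose characters have already been equated.
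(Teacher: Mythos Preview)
The paper does not supply its own proof of this statement: Theorem~\ref{thm:mutationrep} is quoted verbatim from Ruberman \cite[Theorem~2.2]{Ru} and followed only by the one-sentence geometric gloss that $\mu$ acts as a rigid motion of a fundamental domain for $\rho_{F}(\pi_{1}(F))$. So there is nothing in the present paper to compare your argument against; your proposal is being measured against Ruberman's original, which the paper does not reproduce.

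On its own merits your character-variety strategy is sound and is in fact a standard way to establish results of this type. One point deserves more care than you give it. The paper's definition of a hyperelliptic involution is that $\mu$ fixes every isotopy class of \emph{curves} in $F$; in common usage this means \emph{simple} closed curves, so the immediate translation only yields that $\mu_{\ast}(\gamma)$ is conjugate to $\gamma^{\pm 1}$ when $\gamma$ is representable by an embedded loop. Your character argument, however, needs equality of $\mathrm{tr}^{2}$ on \emph{all} of $\pi_{1}(F)$. There are two clean ways to close this: either (i) observe that for the small surfaces at hand ($S_{0,3}$, $S_{0,4}$, $S_{1,1}$, $S_{1,2}$, $S_{2,0}$) one can verify directly that $\mu_{\ast}$ sends every conjugacy class to its inverse class, or (ii) use the $\mathrm{SL}(2)$ trace identities to note that the character of a representation of a surface group is determined by its values on a finite collection of elements all representable by simple closed curves (generators and certain short products thereof), so agreement on simple curves forces agreement everywhere. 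Either route completes your argument. The $\mathrm{SL}/\mathrm{PSL}$ lifting issue you flag is genuine but, as you say, is handled by the irreducibility (non-elementarity) of the image together with standard results on the $\mathrm{PSL}(2,\mathbb{C})$ character variety; the type-preserving hypothesis guarantees irreducibility for all the surfaces in question.
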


Geometrically, this means that a hyperelliptic involution acts as a rigid motion of a fundamental domain for $\rho_{F}(\pi_{1}(F))$ in $\mathbb{H}^{3}$. 

In what follows, suppose that $M = \mathbb{H}^{3} / \Gamma$ where $\Gamma$ is the Kleinian group corresponding to the representation $\rho: \pi_{1}(M) \rightarrow \text{PSL}(2, \mathbb{C})$. In addition, assume that $(F, \mu)$ is a hyperelliptic surface inside of $M$, and mutation along $F$ produces $M^{\mu}$. If $F$ is separating in $M$, then assume cutting along $F$ decomposes $M$ into two pieces, $M_{a}$ and $M_{b}$. If $F$ is non-separating, then assume cutting along $F$ decomposes $M$ into $N$ where $\partial N = F_{1} \cup F_{2}$. Here, $F_{1}$ and $F_{2}$ are copies of $F$ and $M$ is the quotient of $N$ under some homeomorphism $\psi: F_{1} \rightarrow F_{2}$. Also, assume that $\Gamma_{a}$, $\Gamma_{b}$, $\Gamma_{F}$, and $\Gamma_{N}$ are Kleinian subgroups of $\Gamma$ that are isomorphic to $\pi_{1}(M_{a})$, $\pi_{1}(M_{b})$, $\pi_{1}(F)$, and $\pi_{1}(N)$, respectively, with these isomorphisms coming from restricting $\rho:\pi_{1}(M) \rightarrow \text{PSL}(2, \mathbb{C})$.  

 The previous paragraph tells us that $\Gamma = \left\langle \Gamma_{a}, \Gamma_{b} \right\rangle  \cong \Gamma_{a} \ast_{\Gamma_{F}} \Gamma_{b}$ when $F$ is separating and $\Gamma = \left\langle \Gamma_{N} , \gamma \right\rangle \cong \Gamma_{N} \ast_{\gamma}$ where $\gamma g \gamma^{-1} = \psi_{\ast}(g)$ for $g$ in the subgroup $\Gamma_{1}$ of $\Gamma_{N}$, when $F$ is non-separating. The following lemma shows that we also get a decomposition of $\Gamma^{\mu}$ in terms of $\Gamma_{a}$ and $\Gamma_{b}$. A similar lemma is given by Kuessner in \cite[Proposition 3.1]{Ku}.
 
 In the following lemma and theorem, we use $=$ to denote equality of Kleinian groups and $\cong$ to denote an abstract group isomorphism.

\begin{lemma}
\label{lemma:Fgroups}
Let $F \subset M$ be a properly embedded surface that is incompressible, \\ $\partial$-incompressible, and admits a hyperelliptic involution $\mu$. If $F$ is separating, then there exists $\beta \in \text{PSL}(2, \mathbb{C})$ such that 

\begin{center}
$\Gamma = \left\langle \Gamma_{a}, \Gamma_{b} \right\rangle  \cong \Gamma_{a} \ast_{\Gamma_{F}} \Gamma_{b}$ and $\Gamma^{\mu} = \left\langle \Gamma_{a}, \beta \Gamma_{b} \beta^{-1} \right\rangle  \cong \Gamma_{a} \ast_{\Gamma_{F}} \beta \Gamma_{b} \beta^{-1}$.
\end{center}
If $F$ is non-separating, then there exists $\beta \in \text{PSL}(2, \mathbb{C})$ such that
\begin{center}
$\Gamma = \left\langle \Gamma_{N} , \alpha \right\rangle \cong \Gamma_{N} \ast_{\alpha}$ and $\Gamma^{\mu} = \left\langle \Gamma_{N} , \alpha \beta \right\rangle \cong \Gamma_{N} \ast_{\alpha \beta}$,
\end{center}
where $\alpha g \alpha^{-1} = \psi_{\ast}(g)$ for $g$ in the subgroup $\Gamma_{1}$ of $\Gamma_{N}$ uniformizing $\pi_{1}(F)$ and $\beta$ normalizes $\Gamma_{1}$ with $\beta g \beta^{-1} = \mu_{\ast}(g)$.
\begin{flushleft}
In both cases, $\Gamma^{\mu}$ is discrete and $M^{\mu}$ is homeomorphic to $\mathbb{H}^{3} / \Gamma^{\mu}$.
\end{flushleft}

\end{lemma}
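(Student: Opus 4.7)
The plan is to use Theorem~\ref{thm:mutationrep} to produce an element $\beta$ realizing $\mu_*$ by conjugation, to construct $\Gamma^\mu$ as indicated in the statement, to identify it abstractly with $\pi_1(M^\mu)$ via the obvious amalgamation (resp.\ HNN) presentation of the mutated manifold, and finally to invoke a Maskit combination theorem to upgrade this abstract isomorphism to discreteness of $\Gamma^\mu$ together with a homeomorphism $\mathbb{H}^3/\Gamma^\mu \cong M^\mu$. Since $F$ is incompressible and $\partial$-incompressible, $\rho_F := \rho|_{\pi_1(F)}$ is a discrete faithful representation taking peripheral elements to parabolics, so Theorem~\ref{thm:mutationrep} supplies $\beta \in \mathrm{PSL}(2,\mathbb{C})$ with $\beta\,\rho_F(g)\,\beta^{-1} = \rho_F(\mu_*(g))$ for all $g \in \pi_1(F)$. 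Because $\mu^2 = \mathrm{id}$, the element $\beta^2$ centralizes $\rho_F(\pi_1(F))$; since that image is a non-elementary subgroup of $\mathrm{PSL}(2,\mathbb{C})$ it has trivial centralizer, so $\beta^2 = \mathrm{id}$. Geometrically $\beta$ is an involutive rotation whose axis lies in the almost least area lift $\tilde F \subset \mathbb{H}^3$ of $F$ produced by Theorem~\ref{thm:LAsurfaces}, so $\beta$ preserves $\tilde F$ and exchanges its two complementary half-spaces.

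In the separating case, van Kampen gives $\Gamma \cong \Gamma_a *_{\Gamma_F} \Gamma_b$, while the topological construction of $M^\mu$ yields $\pi_1(M^\mu) \cong \pi_1(M_a) *_{\pi_1(F),\,\mu_*} \pi_1(M_b)$---the same amalgam with the $\pi_1(F)$ identification twisted by $\mu_*$. Setting $\Gamma^\mu := \langle \Gamma_a, \beta\Gamma_b\beta^{-1} \rangle$, the compositions $\pi_1(F) \hookrightarrow \pi_1(M_a) \xrightarrow{\rho} \Gamma_a \hookrightarrow \Gamma^\mu$ and $\pi_1(F) \xrightarrow{\mu_*} \pi_1(F) \hookrightarrow \pi_1(M_b) \xrightarrow{\rho} \Gamma_b \xrightarrow{\mathrm{Ad}_\beta} \beta\Gamma_b\beta^{-1} \hookrightarrow \Gamma^\mu$ agree (using $\beta^2 = \mathrm{id}$ together with Theorem~\ref{thm:mutationrep}), yielding a surjection $\pi_1(M^\mu) \twoheadrightarrow \Gamma^\mu$. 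In the non-separating case $\Gamma \cong \Gamma_N *_\gamma$ with $\gamma g \gamma^{-1} = \psi_*(g)$, and the mutated gluing $\psi \circ \mu$ produces an HNN extension whose stable letter must induce $(\psi \circ \mu)_* = \psi_* \circ \mu_*$. The computation $(\gamma\beta)\,g\,(\gamma\beta)^{-1} = \gamma\,\mu_*(g)\,\gamma^{-1} = \psi_*(\mu_*(g))$ for $g \in \Gamma_1$ confirms that $\gamma\beta$ is such a stable letter, producing a surjection $\pi_1(M^\mu) \twoheadrightarrow \Gamma^\mu := \langle \Gamma_N, \gamma\beta \rangle$.

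To promote these abstract surjections to group isomorphisms and to obtain discreteness of $\Gamma^\mu$ together with $\mathbb{H}^3/\Gamma^\mu \cong M^\mu$, I would invoke the first (resp.\ second) Maskit combination theorem. The precisely invariant system required as input is supplied by the $\Gamma$-orbit of $\tilde F$ after $F$ is placed in almost least area form via Theorem~\ref{thm:LAsurfaces}: the lifts are disjoint properly embedded planes whose complementary regions in $\mathbb{H}^3$ are precisely invariant under the conjugates of $\Gamma_a$ and $\Gamma_b$ (resp.\ of $\Gamma_N$), and by the setup $\beta$ exchanges the two sides of $\tilde F$. Maskit combination then yields discreteness of $\Gamma^\mu$, injectivity of the surjections above, and a fundamental polyhedron for $\Gamma^\mu$ obtained by gluing fundamental polyhedra of $\Gamma_a$ and $\beta\Gamma_b\beta^{-1}$ (resp.\ of $\Gamma_N$) across $\tilde F$ via $\beta$; this identifies the quotient with $M^\mu$ by construction. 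The main obstacle will be verifying the precise-invariance hypothesis at the cusps: one must ensure that $\beta$ interacts correctly with the parabolic subgroups of $\Gamma_F$ (respectively $\Gamma_1$) stabilizing ideal points of $\tilde F$, which uses that $\mu$ permutes the cusps of $F$ and that $\rho_F$ is type-preserving.
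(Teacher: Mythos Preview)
Your proposal is correct and follows essentially the same approach as the paper: van Kampen for the amalgam/HNN structure, Theorem~\ref{thm:mutationrep} for $\beta$, Ruberman's least area machinery (Theorem~\ref{thm:LAsurfaces}) to produce a $\beta$-invariant lift $\tilde F$, and a Maskit combination theorem for discreteness and the identification $\mathbb{H}^3/\Gamma^\mu \cong M^\mu$. One organizational difference worth noting: your closing concern about verifying precise invariance at the cusps is exactly what the paper sidesteps by working with the two open $3$-balls $B_a,B_b$ of $\mathbb{H}^3\setminus\tilde F$ (rather than regions cut out by limit sets in $\partial_\infty\mathbb{H}^3$) as the proper interactive pair, and then pointing to Kuessner's argument; this is the move that removes the virtual-fiber hypothesis and handles the parabolic issue in one stroke, so you need not treat the cusps separately. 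Your additional observation that $\beta^2=\mathrm{id}$ is correct and pleasant but is not used in the paper's version.
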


\textbf{Remark: }In Kuessner's version of this statement, he assumes that the surface $F$ is not a virtual fiber. However, after the proof of \cite[Proposition 3.1]{Ku}, Kuessner suggests a slight variation of his proof that removes this requirement. Here, we make no such requirement of $F$ and prove the more general case by following Kuessner's suggestion to utilize the least area surface machinery that Ruberman develops in \cite{Ru}.

\begin{proof}
Here, we give a proof of the case when $F$ is separating. The non-separating case is proved similarly, and we give a brief outline of this case at the end of this proof. Since $F$ is incompressible in M, $F$ is also incompressible in $M_{a}$ and $M_{b}$. Thus, the inclusion maps $i: F \rightarrow M_{a}$ and $j: F \rightarrow M_{b}$ induce monomorphisms $i_{\ast}: \pi_{1}(F) \rightarrow \pi_{1}(M_{a})$ and $j_{\ast}: \pi_{1}(F) \rightarrow \pi_{1}(M_{b})$, respectively. Let $\rho_{a}$ denote the restriction of $\rho$ to $\pi_{1}(M_{a})$, and similarly, let $\rho_{F}$ denote the restriction of $\rho$ to $\pi_{1}(F)$. Then the map $f_{1}: \Gamma_{F} \rightarrow \Gamma_{a}$ defined by $f_{1} = \rho_{a} i_{\ast} \rho_{F}^{-1}$ is a well-defined monomorphism. Similarly, we have a monomorphism $f_{2}: \Gamma_{F} \rightarrow \Gamma_{b}$, defined by $f_{2} = \rho_{b} j_{\ast} \rho_{F}^{-1}$, where $\rho_{b}$ denotes the restriction of $\rho$ to $\pi_{1}(M_{b})$. This tells us that $\Gamma \cong \Gamma_{a} \ast_{\Gamma_{F}} \Gamma_{b} \cong (\Gamma_{a} \ast \Gamma_{b}) / N$, where $N$ is the normal subgroup of $\Gamma_{a} \ast \Gamma_{b}$ generated by elements of the form $f_{1}(h)f_{2}(h)^{-1}$, for all $h \in \Gamma_{F}$. 

Now, $M^{\mu}$ is also constructed by cutting $M$ along $F$, and then gluing the pieces $M_{a}$ and $M_{b}$ back together along $F$.  However, we now rotate one of these pieces, say $M_{b}$, by the hyperelliptic involution $\mu$ before gluing it back to $M_{a}$ along $F$. Theorem \ref{thm:mutationrep} provides the existence of some $\beta \in \text{PSL}(2, \mathbb{C})$ such that $\rho_{F}  \mu_{\ast} = \beta  \rho_{F} \beta^{-1}$. Let $f_{\beta}: \Gamma_{b} \xrightarrow{\sim} \beta\Gamma_{b}\beta^{-1}$ be the map that conjugates by $\beta$. This gives us a well-defined monomorphism $f_{3}: \Gamma_{F} \rightarrow \beta \Gamma_{b} \beta^{-1}$ defined by $f_{3} = f_{\beta} f_{2}$. 

First, we will show that $\Gamma^{\mu}  \cong \Gamma_{a} \ast_{\Gamma_{F}} \beta \Gamma_{b} \beta^{-1} \cong (\Gamma_{a} \ast \beta \Gamma_{b} \beta^{-1}) / K$, where $K$ is the subgroup generated by elements of the form $f_{1}(h)f_{3}(h)^{-1}$ for all $h \in \Gamma_{F}$.  This group isomorphism will follow from the Maskit combination theorem \cite[VII.A.10]{Mas}. Assume that $F$ is isotopic to its least-area representative; the case where $F$ double covers a least-area representative is left to the reader. Ruberman's Theorem \ref{thm:mutationrep} implies that the element $\beta$ such that $\rho_{F}  \mu_{\ast} = \beta  \rho_{F} \beta^{-1}$ induces an isometric involution $\tilde{\tau}$ of the cover $M_{F} \rightarrow M$ corresponding to $\pi_{1}(F)$. In the proof of \cite[Theorem 1.3]{Ru}, Ruberman shows that a least-area representative of $F$ lifts to an embedding $\hat{F}$ in $M_{F}$. Furthermore, $\hat{F}$ is invariant under $\hat{\tau}$, and so, the preimage $\tilde{F}$ of $\hat{F}$ in $\mathbb{H}^{3}$ is $\beta$-invariant. Since $\tilde{F}$ is a properly embedded plane in $\mathbb{H}^{3}$, we have that $\mathbb{H}^{3} \setminus \tilde{F}$ decomposes into two (non-empty) $3$-balls, $B_{a}$ and $B_{b}$.   

We claim that $B_{a}$ and $B_{b}$ comprise a proper interactive pair of sets (in the sense of \cite[VII.A]{Mas}) for $\Gamma_{a}$ and  $\beta \Gamma_{b} \beta^{-1}$. Here, we can follow the same argument as Kuessner in \cite[Proposition 3.1]{Ku}, but replace the subsets $B_{1}$ and $B_{2}$ of $\partial_{\infty} \mathbb{H}^{3}$ with $B_{a}$ and $B_{b}$. The Maskit combination theorem then implies that $\Gamma^{\mu} = \left\langle \Gamma_{a}, \beta \Gamma_{b} \beta^{-1} \right\rangle  \cong \Gamma_{a} \ast_{\Gamma_{F}} \beta \Gamma_{b} \beta^{-1}$. The fact that $\Gamma^{\mu}$ is discrete follows from the argument in \cite[VII.C.4]{Mas}.     

Finally, we claim that $M^{\mu}$ is homeomorphic to $\mathbb{H}^{3} / \Gamma^{\mu}$. By applying van Kampen's Theorem, we have that $\pi_{1}M^{\mu}\cong \pi_{1}M_{a} \ast_{\pi_{1}F} \pi_{1}M_{b}$, where the respective inclusions of $\pi_{1}F$ are given by $i_{\ast}$ and $j_{\ast}\mu_{\ast}$. This gives an isomorphism $\rho^{\mu}: \pi_{1}M^{\mu} \rightarrow \Gamma^{\mu}$ defined on $\pi_{1}M_{a}$ by $\rho$ and on $\pi_{1}M_{b}$ by $\beta \rho \beta^{-1}$, as desired. 

To prove the non-separating case, we would use the Maskit combination theorem for HNN-extensions along with the same least-area surface argument used in the separating case to obtain the desired group isomorphism $\Gamma^{\mu} = \left\langle \Gamma_{N} , \alpha \beta \right\rangle \cong \Gamma_{N} \ast_{\alpha \beta}$ and discreteness of $\Gamma^{\mu}$. Again, $M^{\mu}$ will be homeomorphic to $\mathbb{H}^{3} / \Gamma^{\mu}$ by an application of van Kampen's Theorem.   
\end{proof}

By combining the previous lemma with Corollary \ref{cor:disjointexplicit} and Corollary \ref{cor:disjointgeo}, we can now give a number of scenarios for which mutation preserves a portion of the (complex) length spectrum. 

In what follows, let $G_{L}(M)$ denote the geodesics in $M$ that make up the initial length spectrum up to a cut off length of $L$, that is, 
\begin{center}
	$G_{L}(M) = \left\{ \gamma \subset M : \gamma \hspace{0.05in} \text{is a closed geodesic and} \hspace{0.05in} \ell(\gamma) < L \right\}$.
\end{center}

\begin{prop}
	\label{prop:homotopegeo}
Let $F \subset M$ be a surface that is incompressible, $\partial$-incompressible, and admits a hyperelliptic involution $\mu$.  Let $\gamma \subset M$ be a closed geodesic. If $\gamma$ can be homotoped disjoint from $F$, then there exists a geodesic $\gamma^{\mu} \subset M^{\mu}$ such that $\ell_{\mathbb{C}}(\gamma) = \ell_{\mathbb{C}}(\gamma^{\mu})$. Furthermore, suppose that every geodesic shorter than length $L$ can be homotoped disjoint from $F$ (in both $M$ and $M^{\mu}$). Then there is a bijection between the complex length spectra of $M$ and $M^\mu$ up to length $L$.
\end{prop}

\begin{proof}
Suppose $M$ and $M^{\mu}$ are hyperbolic $3$-manifolds that differ by mutation along $(F, \mu)$. Let $\gamma \subset M$ be any closed geodesic that can be homotoped disjoint from $F$. Assume we have performed this homotopy. In what follows, we abuse notation and let $\gamma$ refer to multiple representatives from the homotopy class $[\gamma] \in \pi_{1}(M)$, and not just the geodesic representative. Similarly for $[\gamma^{\mu}] \in \pi_{1}(M^{\mu})$.

First, suppose that $F$ separates $M$.  By Lemma \ref{lemma:Fgroups}, we have that $\Gamma = \left\langle \Gamma_{a}, \Gamma_{b} \right\rangle$ and $\Gamma^{\mu} = \left\langle \Gamma_{a}, \beta \Gamma_{b} \beta^{-1} \right\rangle$, for some $\beta \in \text{PSL}(2, \mathbb{C})$. Since $\gamma \subset M$ has been homotoped disjoint from $F$, $\gamma \in M_{a}$, or $\gamma \in  M_{b}$. Without loss of generality, assume $[\gamma] \in \pi_{1}(M_{a})$, i.e., $\gamma$ now lies in $M_{a}$. $[\gamma] \in \pi_{1}(M)$ has a unique (complex) length associated to it, $\ell_{\mathbb{C}}(\gamma)$, coming from the representation $\rho: \pi_{1}(M) \xrightarrow{\sim} \Gamma = \left\langle \Gamma_{a}, \Gamma_{b} \right\rangle \subset \text{PSL}(2, \mathbb{C})$.  This (complex) length is determined by the trace of its representation.  Specifically, $\cosh(\frac{\ell_{\mathbb{C}}(\gamma)}{2}) = \pm \frac{tr(\gamma)}{2}$, where $tr(\gamma)$ denotes the trace of the representation of $\gamma$.  Since we have homotoped $\gamma$ disjoint from $F$, mutating along $F$ to obtain $M^{\mu}$ will produce a corresponding homotopy class $[\gamma^{\mu}] \in \pi_{1}(M^{\mu})$. Similarly, $[\gamma^{\mu}] \in \pi_{1}(M^{\mu})$ also has a unique (complex) length associated to it, coming from $\rho_{\mu}: \pi_{1}(M^{\mu}) \xrightarrow{\sim} \Gamma^{\mu} = \left\langle \Gamma_{a}, \beta \Gamma_{b} \beta^{-1} \right\rangle \subset \text{PSL}(2, \mathbb{C})$. Thus, $[\gamma]$ and $[\gamma^{\mu}]$ have the same representation in $\text{PSL}(2, \mathbb{C})$ since $\rho$ and $\rho_{\mu}$ agree on $\pi_{1}(M_{a})$. So, the same complex length is associated to $\gamma$ and $\gamma^{\mu}$, as desired.  Note that, if $\gamma$ was homotoped into $M_{b}$ instead, then the representations of $\gamma$ and $\gamma^{\mu}$ into $\text{PSL}(2, \mathbb{C})$ would be conjugate to one another. Since trace is preserved by conjugation, the corresponding complex length will still be preserved too.

If $F$ is non-separating in $M$, then Lemma \ref{lemma:Fgroups} gives us that $\Gamma = \left\langle \Gamma_{N} , \alpha \right\rangle$  and $\Gamma^{\mu} = \left\langle \Gamma_{N} , \alpha \beta \right\rangle$.  Since $\gamma$ has been homotoped disjoint from $F$, we once again have that $[\gamma] \in \pi_{1}(N) \subset \pi_{1}(M)$ and $[\gamma^{\mu}] \in \pi_{1}(N) \subset \pi_{1}(M^{\mu})$ have the same representation in $\text{PSL}(2, \mathbb{C})$ (up to conjugation), and so, the same complex length associated to them.

Now, suppose that every geodesic shorter than length $L$ can be homotoped disjoint from $F$, and say this set of geodesics is $G_{L}(M) = \left\lbrace \gamma_{i}\right\rbrace_{i=1}^{n}$. By the first part of this proof, each $\gamma_{i}$ will have a mutant partner $\gamma_{i}^{\mu}$ in $M^{\mu}$ with the same complex length. We need to show that there exists a bijective correspondence between $G_{L}(M)$ and $G_{L}(M^{\mu})$. Let $f : G_{L}(M) \rightarrow G_{L}(M^{\mu})$ be the function defined by $f(\gamma_{i}) = \gamma_{i}^{\mu}$, for each $\gamma_{i} \in G_{L}(M)$.  This map is obviously one-to-one: if $\gamma_{i}^{\mu} = f(\gamma_{i}) = f(\gamma_{j}) = \gamma_{j}^{\mu}$, then mutating $M^{\mu}$ along $(F, \mu)$ to obtain $M$ implies $\gamma_{i} = \gamma_{j}$.  Now, suppose $f$ is not onto, and so, there exists some $\gamma^{\mu} \in G_{L}(M^{\mu})$ such that $\gamma^{\mu} \notin \left\{\gamma_{i}^{\mu}\right\}_{i=1}^{n}$. Mutate $M^{\mu}$ by ($F, \mu)$ to obtain $M$.  Since $\gamma^{\mu} \in G_{L}(M^{\mu})$, $\ell(\gamma^{\mu}) < L$, which implies that $\gamma^{\mu}$ can be homotoped disjoint from $F$. The first part of this proof implies that there is a corresponding $\gamma \in M$ with the same complex length as $\gamma^{\mu}$.  However, then $\ell(\gamma) < L$, i.e. $\gamma \in G_{L}(M)$, which is a contradiction.  Thus, $f$ gives a bijective correspondence between $G_{L}(M)$ and $G_{L}(M^{\mu})$, as desired.  
\end{proof}

The following corollary quickly follows from Proposition \ref{prop:homotopegeo} and Corollary \ref{cor:disjointexplicit}.

\begin{cor}
\label{cor:syspreserved2}
Let $F \subset M$ be a surface that is incompressible, $\partial$-incompressible, and admits a hyperelliptic involution $\mu$.  Then for any $L <0.015$, $G_{L}(M)$ is in bijective correspondence with $G_{L}(M^{\mu})$. In particular, if $M$ has $n$ geodesics shorter than $L$, then $M$ and $M^{\mu}$ have at least the same $n$ initial values of their respective complex length spectra.  
\end{cor}

\begin{proof}
	Suppose there are $n$ geodesics shorter than $L$, and set $\left\{\gamma_{i}\right\}_{i=1}^{n} = G_{L}(M)$.  By Corollary \ref{cor:disjointexplicit}, we can isotope (and so homotope) any such $\gamma_{i}$ disjoint from $F$. Proposition \ref{prop:homotopegeo} then implies that for each $\gamma_{i}$, there exists a corresponding closed geodesic $\gamma_{i}^{\mu}$ in $M^{\mu}$, such that $\ell_{\mathbb{C}}(\gamma_{i}) = \ell_{\mathbb{C}}(\gamma_{i}^{\mu})$. In addition, Proposition \ref{prop:homotopegeo} guarantees that we have the desired bijective correspondence between $G_{L}(M)$ and $G_{L}(M^{\mu})$.
\end{proof}

\textbf{Remark:} The above corollary uses the length condition from Corollary \ref{cor:disjointexplicit} to determine when $M$ and its mutant $M^{\mu}$ have the same initial length spectra.  We also get corollaries (highlighted below), based upon the tube radius condition and the normalized length condition.  However, with the tube radius condition, we can not guarantee that these common geodesic lengths are the shortest ones in the length spectra of $M$ and $M^{\mu}$, since there can exist geodesics with a very large embedded tube radius that are not very short. Thus, we can only say that that a portion of these length spectra are the same, not necessarily the initial length spectra. Fortunately, for the normalized length condition, we can still get a corollary that determines when $M$ and $M^{\mu}$ have the same initial length spectra, by using the second condition in Corollary \ref{cor:disjointgeo}. 

\begin{cor}
\label{cor:syspreservedr}
Let $F \subset M$ be a surface that is incompressible, $\partial$-incompressible, and admits a hyperelliptic involution $\mu$. Suppose that $M$ has exactly $n$ geodesics with embedded tubular radius larger than some constant $R > 2 \ln(1 + \sqrt{2})$. Then $M$ and $M^{\mu}$ have at least $n$ common values in their respective (complex) length spectra.  
\end{cor}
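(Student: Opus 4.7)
The plan is to follow the proof of Theorem \ref{cor:syspreserved2} almost verbatim, replacing the length hypothesis (which fed into part (2) of Corollary \ref{cor:disjointexplicit}) by the tube-radius hypothesis (which feeds into part (1)). Let $\{\gamma_i\}_{i=1}^n$ be the $n$ closed geodesics in $M$ with embedded tube radius exceeding $R > 2\ln(1+\sqrt{2})$. Since $F$ admits a hyperelliptic involution, $|\chi(F)|\leq 2$, so Corollary \ref{cor:disjointexplicit}(1) applies to each $\gamma_i$: every such $\gamma_i$ can be isotoped disjoint from $F$. Perform these isotopies, so that each representative of $[\gamma_i]\in \pi_1(M)$ is supported in a single component of $M\setminus F$.

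Next, invoke Lemma \ref{lemma:Fgroups} to obtain compatible decompositions of the Kleinian groups. In the separating case, $\Gamma = \langle \Gamma_a, \Gamma_b\rangle$ and $\Gamma^\mu = \langle \Gamma_a, \beta\Gamma_b\beta^{-1}\rangle$ for some $\beta\in\mathrm{PSL}(2,\mathbb{C})$ provided by Theorem \ref{thm:mutationrep}. Each $[\gamma_i]$ lies in either $\pi_1(M_a)$ or $\pi_1(M_b)$, and after mutation the corresponding class $[\gamma_i^\mu]\in \pi_1(M^\mu)$ is represented in $\mathrm{PSL}(2,\mathbb{C})$ either by the identical element (if $\gamma_i\subset M_a$) or by its conjugate by $\beta$ (if $\gamma_i\subset M_b$). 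Either way, trace is preserved, so $\ell_{\mathbb{C}}(\gamma_i)=\ell_{\mathbb{C}}(\gamma_i^\mu)$ via the identity $\cosh(\ell_{\mathbb{C}}(\gamma)/2)=\pm\mathrm{tr}(\rho(\gamma))/2$. The non-separating case uses the HNN decomposition $\Gamma^\mu=\langle \Gamma_N,\gamma\beta\rangle$ in exactly the same way: since $\gamma_i$ sits in $\pi_1(N)$ after isotopy, the representations $\rho$ and $\rho^\mu$ agree on it.

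It remains to show that the assignment $\gamma_i\mapsto \gamma_i^\mu$ yields $n$ \emph{distinct} values in the complex length spectrum of $M^\mu$. Injectivity of $\gamma_i \mapsto \gamma_i^\mu$ follows by reversing the mutation: $(F,\mu)$ is an involution, so if $\gamma_i^\mu=\gamma_j^\mu$ then mutating $M^\mu$ along $(F,\mu)$ recovers $\gamma_i=\gamma_j$. Hence at least $n$ values of the complex length spectrum of $M$, namely $\{\ell_{\mathbb{C}}(\gamma_i)\}_{i=1}^n$, are realized in the complex length spectrum of $M^\mu$.

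The only genuine subtlety — and the reason this is a corollary rather than a strengthening of Theorem \ref{cor:syspreserved2} — is highlighted in the remark immediately preceding the statement: a geodesic with large embedded tube radius need not be short, so the common values $\{\ell_{\mathbb{C}}(\gamma_i)\}$ are not guaranteed to form an \emph{initial} segment of either length spectrum. Accordingly, the conclusion is stated only as ``$n$ common values'' rather than ``the same $n$ initial values,'' and no additional argument is required beyond the verbatim adaptation described above.
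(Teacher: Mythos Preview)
Your proof is correct and follows exactly the approach the paper intends: the corollary has no separate proof in the paper, and the remark immediately preceding it indicates that one simply reruns the argument of Theorem~\ref{cor:syspreserved2} with the tube-radius condition of Corollary~\ref{cor:disjointexplicit}(1) in place of the length condition, dropping the surjectivity step since large tube radius does not force shortness. Your write-up carries this out accurately, including the observation that hyperellipticity forces $|\chi(F)|\le 2$ and the injectivity argument via the involutive nature of mutation.
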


\begin{cor}
\label{cor:syspreservednl}
Let $F \subset M$ be a surface that is incompressible, $\partial$-incompressible, and admits a hyperelliptic involution $\mu$. Suppose that $M$ has exactly $n$ geodesics that are the core geodesics coming from Dehn filling a hyperbolic $3$-manifold $N$. Let $\widehat{L}(s_{i})$ denote the normalized slope length of the $i^{th}$ Dehn filling. 

\begin{itemize}
\item If $\widehat{L}(s_{i}) \geq 14.90\sqrt{n}$ for each $i$, $1 \leq i \leq n$, then the $n$ core geodesics of the filling tori lie in the set of preserved (complex) geodesic lengths.   
\item If $\widehat{L}(s_{i}) \geq 20.76\sqrt{n}$ for each $i$, $1 \leq i \leq n$, then $M$ and $M^{\mu}$ have at least the same $n$ initial values of their respective (complex) length spectra. 
\end{itemize}
\end{cor}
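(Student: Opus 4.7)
The plan is to derive both bullets by feeding the two parts of Corollary \ref{cor:disjointgeo} into the representation-theoretic machinery developed in the proof of Theorem \ref{cor:syspreserved2}. The normalized-length hypotheses give quantitative control over the core geodesics $\gamma_i$---tube radius in the first case, actual length in the second---and in each case the task reduces to verifying that every $\gamma_i$ can be isotoped disjoint from $F$. Once this is done, the Kleinian-group decomposition from Lemma \ref{lemma:Fgroups} forces each $\rho(\gamma_i)$ to have the same trace (hence same complex length) in $\Gamma^{\mu}$ as in $\Gamma$.

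For the first bullet, I would apply Corollary \ref{cor:disjointgeo}(1) to obtain $r_i > 2\ln(1+\sqrt{2})$ for every $i$, so each $\gamma_i$ is isotopic to a closed curve disjoint from $F$. Running the argument of Theorem \ref{cor:syspreserved2} verbatim on each such $\gamma_i$, the element $\rho(\gamma_i) \in \Gamma$ sits inside either $\Gamma_a$ or $\Gamma_b$ (respectively, inside $\Gamma_N$ in the non-separating case). Lemma \ref{lemma:Fgroups} then identifies its image in $\Gamma^{\mu}$ as either the same element or its $\beta$-conjugate; in either case trace and complex length are preserved. This produces $n$ common values in the complex length spectra of $M$ and $M^{\mu}$, though, as the remark preceding the corollary warns, there is no a priori reason to expect these to occupy the initial $n$ slots.

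For the second bullet, I would apply Corollary \ref{cor:disjointgeo}(2) to obtain the strengthened conclusion $\ell(\gamma_i) < 0.015$ for every $i$. Setting $L = 0.015$, each $\gamma_i$ now lies in $G_L(M)$, and Theorem \ref{cor:syspreserved2} supplies a complex-length-preserving bijection $G_L(M) \leftrightarrow G_L(M^{\mu})$. Since the initial $n$ entries of the length spectrum of $M$ are all bounded above by $L$ (the $n$ core geodesics witness this), the bijection forces the initial $n$ entries of $M^{\mu}$ to also be bounded by $L$ and to agree with those of $M$.

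The only subtlety is explaining why the weaker $14.90\sqrt{n}$ bound of the first bullet cannot be promoted to the stronger conclusion of the second: a geodesic with enormous embedded tube need not itself be short, so without an honest length cutoff the $\gamma_i$ may be displaced from the initial positions of the length spectrum by other short geodesics meeting $F$ essentially. Pushing the normalized length up to $20.76\sqrt{n}$ invokes the total visual area bound of Theorem \ref{thm:NLgiveslength} in addition to the tube radius bound of Theorem \ref{thm:NLgivesradius}, which is precisely what closes this gap.
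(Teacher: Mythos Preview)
Your proposal is correct and follows essentially the same approach as the paper, which derives both bullets by combining Corollary \ref{cor:disjointgeo} with the proof of Theorem \ref{cor:syspreserved2} (as the remark preceding the corollary indicates). One tiny technical point: in the second bullet you set $L = 0.015$, but Theorem \ref{cor:syspreserved2} requires $L < 0.015$ strictly; since there are only finitely many $\gamma_i$ with $\ell(\gamma_i) < 0.015$, you should instead choose $L$ with $\max_i \ell(\gamma_i) < L < 0.015$.
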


\textbf{Remark:} Corollary \ref{cor:syspreserved2}, and so, the second part of Corollary \ref{cor:syspreservednl}, both require geodesics of length less than $0.015$ in order to get a lower bound on how much of the initial length spectrum is preserved under mutation. The work of Meyerhoff \cite{Me} shows that the Margulis constant for the thick-thin decomposition in dimension $3$ is at least $0.104$. Thus, the geodesics corresponding to the initial length spectrum guaranteed to be preserved under mutation are all contained in the thin parts of these manifolds. Specifically, they must all be cores of solid tori and possibly multiples of these cores. In general, many more geodesics are preserved under mutation. Lemma \ref{lemma:Fgroups} implies that every element of the non-elementary groups $\Gamma_{a}$ and $\Gamma_{b}$, or $\Gamma_{N}$ in the non-separating case, maintains its complex length under mutation. This includes any geodesics that can be homotoped disjoint from the mutation surface. 


\section{Hyperbolic Pretzel Knots: $\left\{ K_{2n+1} \right\} _{n=2}^{\infty}$}
\label{sec:RT_and_PK}

Here, we construct a specific class of pretzel knots, $\left\{ K_{2n+1} \right\} _{n=2}^{\infty}$. We will be able to show that for each $n \geq 2$, $K_{2n+1}$ generates a large number of mutant pretzel knots whose complements all have the same volume and initial length spectrum.  This section describes pretzel links, their classification, and the basic properties of $\left\{ K_{2n+1} \right\} _{n=2}^{\infty}$.


\subsection{Pretzel Links}
\label{subsec:PL}

We shall describe vertical tangles and see how they can be used to construct pretzel links. Afterwards, we will give a simple classification of pretzel links.

\begin{defn}[Pretzel link]
The \emph{vertical tangles}, denoted by $\frac{1}{n}$, are made of $n$ vertical half-twists, $n \in \mathbb{Z}$, as depicted in figure \ref{verticaltangles}.
A \emph{pretzel link}, denoted $K\left( \frac{1}{q_{1}}, \frac{1}{q_{2}},\ldots, \frac{1}{q_{n}} \right)$, is defined to be the link constructed by connecting $n$ vertical tangles in a cyclic fashion, reading clockwise, with the $i^{th}$-tangle associated with the fraction $\frac{1}{q_{i}}$.  
\end{defn}

\begin{figure}[ht]
\includegraphics[scale=0.65]{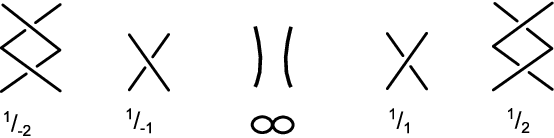}
\caption{Some of the vertical tangles with their associated fractions.}
\label{verticaltangles}
\end{figure}

$K$ in figure \ref{augmentedpretzel} is the pretzel link $K = K( \frac{1}{4}, \frac{1}{7}, \frac{1}{9})$.  Note that, each vertical tangle corresponds with a \textit{twist region} for a knot diagram of a pretzel link. Twist regions are defined at the beginning of Section \ref{subsubsec:AL_and_DF}.

Now, we state the classification of pretzel links, which is a special case of the classification of Montesinos links.  The classification of Montesinos links was originally proved by Bonahon in 1979 \cite{Bo}, and another proof was given by Boileau and Siebenmann in 1980 \cite{BS}.  A proof similar to the one done by Boileau and Siebenmann can be found in \cite[Theorem $12.29$]{BZ}. Here, we state the theorem solely in terms of pretzel links.

\begin{thm} \cite{Bo}
\label{thm:Bo}
The pretzel links $K\left( \frac{1}{q_{1}}, \frac{1}{q_{2}},\ldots, \frac{1}{q_{n}} \right)$ with $n \geq 3$ and $\sum_{j=1}^{n}\frac{1}{q_{j}} \leq n-2$, are classified by the ordered set of fractions $\left(\frac{1}{q_{1}},  \ldots, \frac{1}{q_{n}} \right)$ up to the action of the dihedral group generated by cyclic permutations and reversal of order.
\end{thm}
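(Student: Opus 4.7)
The statement has two directions. That the listed dihedral action preserves the link type is elementary: cyclic permutation of the tangles is realised by rotating the pretzel diagram about its vertical axis of symmetry, while reversal of order is realised by a $\pi$-rotation of $S^{3}$ about a horizontal axis through the diagram. I would dispense with this direction by pictures and then concentrate on the converse, which is the substantive content.

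For the converse my plan is to pass to the double branched cover $\Sigma_{K} \to S^{3}$ along $K = K(\tfrac{1}{q_{1}},\ldots,\tfrac{1}{q_{n}})$. The double branched cover of a rational tangle $\tfrac{1}{q_{i}}$ is a solid torus whose core is an exceptional fibre of multiplicity $|q_{i}|$ of a standard Seifert fibration; gluing these $n$ solid tori along the tori covering the Conway spheres yields a Seifert fibration of $\Sigma_{K}$ over $S^{2}$ with exactly $n$ exceptional fibres of multiplicities $|q_{1}|,\ldots,|q_{n}|$, the signs of the $q_{i}$ being encoded in the individual Seifert invariants and the Euler number. The orbifold Euler characteristic of the base equals $2 - n + \sum 1/|q_{i}|$; the hypothesis $\sum 1/q_{i} \leq n-2$ together with $n \geq 3$ forces this to be non-positive, so the base orbifold is hyperbolic or Euclidean.

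The key input is uniqueness of the Seifert fibration in this range: for Seifert manifolds with base of non-positive orbifold Euler characteristic, the fibration is unique up to isotopy (Waldhausen, Orlik--Vogt--Zieschang, Scott). Consequently the multiset of Seifert invariants, and in particular the unordered set $\{q_{1},\ldots,q_{n}\}$ together with the Euler number, is a topological invariant of $\Sigma_{K}$. To recover the cyclic ordering I would examine the covering involution $\tau \colon \Sigma_{K} \to \Sigma_{K}$, whose quotient is $S^{3}$ and whose fixed set maps homeomorphically to $K$. After an equivariant isotopy, $\tau$ can be taken to preserve the Seifert fibration and therefore descends to an orientation-reversing involution $\bar{\tau}$ of the base orbifold fixing every cone point. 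The fixed set of $\bar{\tau}$ is a simple closed curve on $S^{2}$ passing in a definite cyclic order through the $n$ cone points; this order is the order of the $q_{i}$ in the pretzel presentation, well-defined only up to a choice of starting cone point and a choice of orientation of the circle, which is precisely the dihedral action in the statement.

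The main obstacle is the equivariant refinement of Seifert uniqueness: the standard theorem produces an isotopy, which one must then take to be $\tau$-equivariant. I would establish this using equivariant incompressibility arguments of Meeks--Scott, applied to the collection of vertical tori covering the Conway spheres, so that $\tau$ may be isotoped to preserve them and then descend to an involution of the base. A secondary nuisance is the handful of tuples for which the base orbifold is Euclidean (the triples $(2,3,6)$, $(2,4,4)$, $(3,3,3)$ and the quadruple $(2,2,2,2)$), where the Seifert fibration can fail to be unique; these finitely many cases would be handled separately by direct inspection.
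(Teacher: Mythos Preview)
The paper does not give its own proof of this theorem: it is stated with attribution to Bonahon \cite{Bo}, and the surrounding text notes only that an alternative proof is due to Boileau--Siebenmann \cite{BS} and that a treatment appears in Burde--Zieschang \cite[Theorem~12.29]{BZ}. There is nothing in the paper to compare your argument against.

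That said, your sketch is essentially the classical route taken by Bonahon and by Boileau--Siebenmann: pass to the double branched cover, recognise it as Seifert fibred over $S^{2}$ with cone points recording the tangle parameters, invoke uniqueness of the Seifert fibration in the non-spherical regime, and recover the cyclic order from the fixed curve of the covering involution on the base orbifold. The obstacles you flag (making the isotopy to a fibre-preserving involution equivariant, and the small-base exceptions) are exactly the ones addressed in those references, so your outline is on the right track and would be completed by citing the Meeks--Scott equivariant machinery or by following the exposition in \cite{BZ}. For the purposes of this paper, however, a citation suffices; no argument is expected.
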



\subsection{Our Construction}
\label{subsec:pretzelknots}

Consider the pretzel link $K_{2n+1} = K \left( \frac{1}{q_{1}}, \frac{1}{q_{2}}, \ldots, \frac{1}{q_{2n+1}} \right)$, where each $q_{i} > 6$, $q_{1}$ is even, each $q_{i}$ is odd for $i >1$, and $q_{i} \neq q_{j}$ for $i \neq j$. We will always work with the diagram of $K_{2n+1}$ that is depicted below in figure \ref{figure6}.  Each $R_{i}$ in this diagram of $K_{2n+1}$ represents a twist region in which the vertical tangle $\frac{1}{q_{i}}$ takes place. For $n \geq 2$, $K_{2n+1}$ has the properties listed below; details can be found in \cite{Mi}. Though our construction here is slightly different, it still retains all the same key properties listed below.
\begin{enumerate}
\item Each $K_{2n+1}$ is a hyperbolic knot (link with a single component). 
\item This diagram of $K_{2n+1}$ is alternating.
\item This diagram of $K_{2n+1}$ is prime and twist-reduced (definitions can be found in \cite{FP}).
\item Two such pretzel knots are distinct (as knots) if and only if their corresponding complements are non-isometric. This follows from the Gordon--Luecke Theorem \cite{GL} and Mostow--Prasad rigidity. 
\end{enumerate}

\begin{figure}[ht]
\includegraphics[scale=0.55]{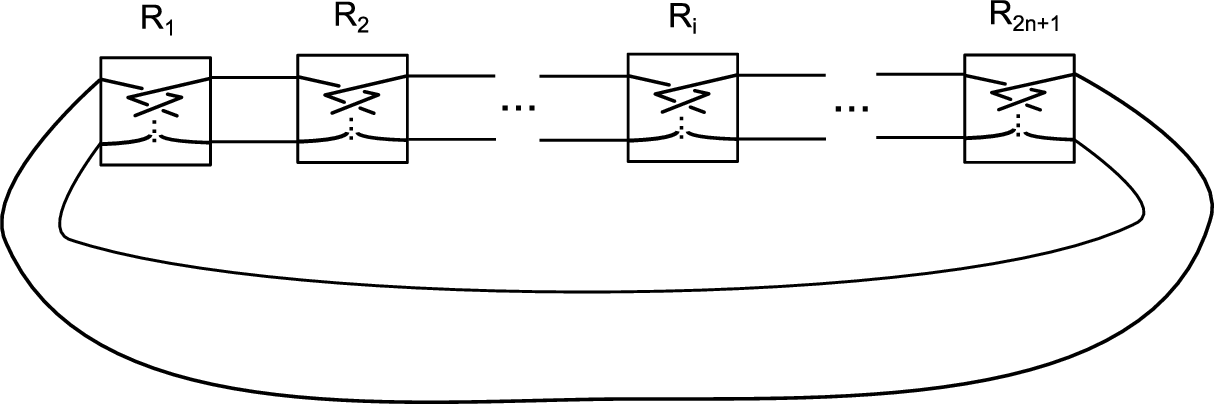}
\caption{The pretzel knot $K_{2n+1}$.  Each twist region $R_{i}$ contains a vertical tangle with $q_{i}$ positive crossings.}
\label{figure6}
\end{figure}


\subsection{Mutations of $K_{2n+1}$ that preserve volume}
\label{sec:Mutations}
In this subsection, we will see how mutations can be useful for preserving the volume of a large class of hyperbolic $3$-manifolds $\left\{M_{2n+1}^{\sigma}\right\}$, with $M_{2n+1}^{\sigma}= \mathbb{S}^{3} \setminus K_{2n+1}^{\sigma}$. $K^{\sigma}_{2n+1}$ is one of our hyperbolic preztel knots constructed in Section \ref{subsec:pretzelknots}, and the upper index $\sigma$ signifies a combination of mutations along Conway spheres, which we will now describe.

Given a $K_{2n+1}$, consider the set $\left\{ (S_{a}, \sigma_{a}) \right\}_{a=1}^{2n}$ where $S_{a}$ is a Conway sphere that encloses only $R_{a}$ and $R_{a+1}$ on one side, and $\sigma_{a}$ is the mutation along $S_{a}$ which rotates about the $y$-axis.  On one of our pretzel knots, such a mutation $\sigma_{a}$ interchanges the vertical tangles $R_{a}$ and $R_{a+1}$, as depicted in figure \ref{SAmutated}. In terms of our pretzel knot vector, such a mutation just switches $\frac{1}{q_{a}}$ and $\frac{1}{q_{a+1}}$.

\begin{figure}[ht]
\includegraphics[scale=0.50]{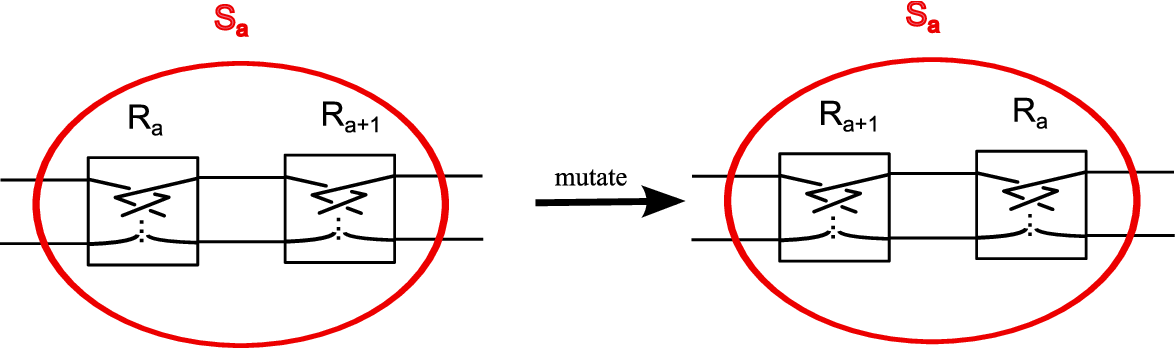}
\caption{Mutation along the Conway sphere $S_{a}$}
\label{SAmutated}
\end{figure}

In \cite{Mi}, we used the following theorem proved by Ruberman to construct many hyperbolic knot complements with the same volume.

\begin{thm} \cite[Theorem $1.3$]{Ru}
\label{thm:Ru1}
Let $\mu$ be any mutation of an incompressible and $\partial$-incompressible hyperelliptic surface in a hyperbolic $3$-manifold $M$.  Then $M^{\mu}$ is also hyperbolic, and $vol(M^{\mu}) = vol(M)$.  
\end{thm}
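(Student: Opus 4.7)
My proof plan for Theorem \ref{thm:Ru1} rests on two ingredients that the excerpt has already assembled: the existence of a least-area embedded representative for $F$ (Theorem \ref{thm:LAsurfaces}) and the fact that the hyperelliptic involution $\mu$ is induced by an isometry of hyperbolic space (Theorem \ref{thm:mutationrep}). The strategy is to realize $M^{\mu}$ as $\mathbb{H}^{3}/\Gamma^{\mu}$ for a Kleinian group $\Gamma^{\mu}$ built by cutting and regluing the standard fundamental domain of $\Gamma$ along the lift of a least-area representative of $F$, and then to read off equality of volumes from the fact that the cut-and-paste is by an isometry.

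First I would isotope $F$ to its (embedded, possibly $\epsilon$-neighborhood) almost least-area representative using Theorem \ref{thm:LAsurfaces}, and then pass to the cover $M_{F}\to M$ corresponding to $\pi_{1}(F)\subset \pi_{1}(M)$. Theorem \ref{thm:mutationrep} provides an element $\beta\in\mathrm{PSL}(2,\mathbb{C})$ with $\rho_{F}\,\mu_{*}=\beta\rho_{F}\beta^{-1}$; this element descends to an orientation-preserving hyperbolic isometry $\hat{\tau}$ of $M_{F}$ that realizes $\mu$ on a lift $\hat{F}$ of $F$. The crucial geometric step is to argue that the least-area representative is uniquely determined in its isotopy class, so that $\hat{F}$ is $\hat{\tau}$-invariant; then the preimage $\widetilde{F}\subset\mathbb{H}^{3}$ is a properly embedded totally least-area plane invariant under $\beta$, and $\mathbb{H}^{3}\setminus\widetilde{F}$ splits into two (or one, in the non-separating case) $\beta$-compatible half-spaces.

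Second, I would reconstruct the hyperbolic structure on $M^{\mu}$ exactly as in the proof of Lemma \ref{lemma:Fgroups}. In the separating case, form $\Gamma^{\mu}=\langle\Gamma_{a},\beta\Gamma_{b}\beta^{-1}\rangle$; the two half-spaces $B_{a},B_{b}\subset\mathbb{H}^{3}$ bounded by $\widetilde{F}$ give a proper interactive pair in the sense of Maskit, so the Maskit combination theorem \cite[VII.A.10]{Mas} produces a discrete faithful representation
\[
\Gamma^{\mu}\cong \Gamma_{a}*_{\Gamma_{F}}\beta\Gamma_{b}\beta^{-1},
\]
and van Kampen's theorem identifies this with $\pi_{1}(M^{\mu})$. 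An analogous HNN-extension argument handles the non-separating case. This shows $M^{\mu}\cong\mathbb{H}^{3}/\Gamma^{\mu}$, so $M^{\mu}$ is hyperbolic.

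Finally, for the volume equality, note that a fundamental domain for $\Gamma$ can be chosen so that $\widetilde{F}$ bounds its pieces, giving fundamental domains $D_{a},D_{b}$ for the factor groups with $\mathrm{vol}(M)=\mathrm{vol}(D_{a})+\mathrm{vol}(D_{b})$. Since $\beta$ is an isometry of $\mathbb{H}^{3}$, $\beta D_{b}$ is a fundamental domain for $\beta\Gamma_{b}\beta^{-1}$ of the same hyperbolic volume, and together with $D_{a}$ it tiles a fundamental domain for $\Gamma^{\mu}$. Hence $\mathrm{vol}(M^{\mu})=\mathrm{vol}(D_{a})+\mathrm{vol}(\beta D_{b})=\mathrm{vol}(M)$. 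The main obstacle I anticipate is justifying that $\hat{F}$ genuinely embeds in $M_{F}$ and is fixed setwise by $\hat{\tau}$, i.e., the uniqueness/equivariance of the least-area representative; without this, the cut-and-paste is not by an isometry along a common piece and both the Maskit combination and the volume bookkeeping break down.
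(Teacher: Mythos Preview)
The paper does not prove this theorem: it is stated with attribution to Ruberman \cite[Theorem~1.3]{Ru} and used as a black box. There is thus no ``paper's own proof'' to compare against.

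That said, your proposal is a correct outline, and it is worth noting that it follows not Ruberman's original argument but rather the alternative approach of Kuessner \cite{Ku}, which the paper explicitly mentions just before Theorem~\ref{thm:mutationrep}. Indeed, your steps (Maskit combination applied to $\Gamma_{a}$ and $\beta\Gamma_{b}\beta^{-1}$ using the half-spaces bounded by the $\beta$-invariant plane $\widetilde{F}$) are exactly what the paper carries out in the proof of Lemma~\ref{lemma:Fgroups}; you are simply adding the volume bookkeeping at the end, which is straightforward once discreteness of $\Gamma^{\mu}$ and the identification $M^{\mu}\cong\mathbb{H}^{3}/\Gamma^{\mu}$ are in hand. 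Ruberman's original proof is more directly geometric: he shows the least-area representative of $F$ lifts to an embedded $\hat{\tau}$-invariant surface in $M_{F}$ and then cuts and reglues $M$ itself along $F$ by the isometry, without passing through combination theorems.

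The obstacle you flag---that $\hat{F}$ embeds in $M_{F}$ and is setwise fixed by $\hat{\tau}$---is precisely the substantive content of Ruberman's proof, and the paper's proof of Lemma~\ref{lemma:Fgroups} invokes it by direct citation to \cite[proof of Theorem~1.3]{Ru}. So your outline is sound, but the step you correctly identify as the hard one is not something you can extract from the tools already in the paper; it requires Ruberman's uniqueness/equivariance argument for least-area surfaces, which is the heart of the matter.
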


Ruberman's proof of this theorem requires the hyperelliptic surface $S$ to be isotoped into least area form in order to perform a volume-preserving mutation of a hyperbolic $3$-manifold $M$ along $S$.  This fact will be crucial, considering the conditions for Theorem \ref{thm:gammasep}.

By the proof of \cite[Theorem $2$]{Mi}, for a given $M_{2n+1} = \mathbb{S}^{3} \setminus K_{2n+1}$, performing combinations of mutations along the collection $\left\{ (S_{a}, \sigma_{a}) \right\}_{a=1}^{2n}$ produces a large number of non-isometric hyperbolic knot complements with the same volume, and this number grows as n increases.  Specifically, we have:

\begin{thm} \cite{Mi}
\label{thm:volume}
For each $n \in \mathbb{N}$, $n>2$, there exist $\frac{(2n)!}{2}$ distinct hyperbolic pretzel knots, $\left\{K_{2n+1}^{\sigma}\right\}$, obtained from each other via mutations along the Conway spheres $\left\{ (S_{a}, \sigma_{a}) \right\}$. Furthermore, for each such $n$, 
\begin{itemize}
\item their knot complements have the same volumes, and 
\item $\left(\frac{2n-1}{2}\right)v_{\mathrm{oct}} \leq vol(M^{\sigma}_{2n+1})  \leq \left(4n+2\right)v_{\mathrm{oct}}$, where $v_{\mathrm{oct}} \left(\approx 3.6638\right)$ is the volume of a regular ideal octahedron.
\end{itemize}

\end{thm}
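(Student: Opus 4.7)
The plan has three components: enumerating the distinct pretzel knots in the mutation orbit, verifying that volume is preserved by every composition of mutations, and bounding the common volume.

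For the enumeration, I would first analyze the effect of a single mutation. As illustrated in figure \ref{SAmutated}, the involution $\sigma_a$ along the Conway sphere $S_a$ interchanges the two vertical tangles $R_a$ and $R_{a+1}$, which at the level of the tangle vector $(1/q_1, \ldots, 1/q_{2n+1})$ is exactly the transposition of the consecutive entries $1/q_a$ and $1/q_{a+1}$. The $2n$ adjacent transpositions generate $S_{2n+1}$, so compositions of the $\sigma_a$ realize every pretzel knot $K(1/q_{\pi(1)}, \ldots, 1/q_{\pi(2n+1)})$ for $\pi \in S_{2n+1}$, giving $(2n+1)!$ ordered tangle vectors in the orbit. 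Bonahon's classification (Theorem \ref{thm:Bo}) then identifies two such knots exactly when their tangle vectors agree up to the dihedral action on $2n+1$ elements, which has order $2(2n+1)$. Since the $q_i$ are pairwise distinct with $q_1$ the unique even entry, no tangle vector in our collection is stabilized by a non-trivial dihedral element; the action is free and the orbit contains exactly $(2n+1)!/[2(2n+1)] = (2n)!/2$ non-isometric pretzel knot complements.

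Volume preservation across the entire orbit follows from Ruberman's Theorem \ref{thm:Ru1} applied to each elementary mutation. Each Conway sphere $S_a$ is a hyperelliptic $4$-punctured sphere, and since the diagram of $K_{2n+1}$ is prime, alternating, and twist-reduced with each $q_i > 6$, a standard argument shows that $S_a$ is incompressible and $\partial$-incompressible in $S^3 \setminus K_{2n+1}$. Therefore each $\sigma_a$-mutation preserves hyperbolicity and volume, and so does any composition.

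For the volume bounds, I would realize each $M^\sigma_{2n+1}$ as a Dehn filling of an untwisted augmented link complement obtained by adding a crossing circle around each of the $2n+1$ twist regions of $K^\sigma_{2n+1}$. As analyzed in Section \ref{sec:GEOofUAL}, this augmented link complement admits a decomposition into regular ideal octahedra with total volume $(4n+2) v_{\mathrm{oct}}$. Thurston's Dehn filling theorem gives the strict inequality $vol(M^\sigma_{2n+1}) < (4n+2)v_{\mathrm{oct}}$, hence the non-strict upper bound. The lower bound is a direct application of the standard volume estimate for prime, twist-reduced, alternating hyperbolic link complements: $vol(M) \geq (v_{\mathrm{oct}}/2)(t(D) - 2)$, where $t(D)$ denotes the twist number of the diagram. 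Since the diagram in figure \ref{figure6} has twist number exactly $2n+1$, this yields $vol(M^\sigma_{2n+1}) \geq (2n-1)v_{\mathrm{oct}}/2$.

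The principal technical obstacle is the polyhedral decomposition of the augmented link complement and the identification of the pieces as regular ideal octahedra. This requires a careful combinatorial analysis of the link projection together with a description of the totally geodesic splitting surface, which the following section develops using the framework for untwisted augmented links. Once this decomposition is available, the upper volume count is a direct sum, and the lower bound is an immediate invocation of the alternating-link volume estimate.
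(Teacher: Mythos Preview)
The paper does not prove this theorem here; it is quoted from the author's earlier paper \cite{Mi}, so there is no in-text proof to compare against.  Your outline matches the argument of \cite{Mi}: the orbit count via Bonahon's classification, Ruberman's theorem for volume preservation (using incompressibility and $\partial$-incompressibility of the Conway spheres), Lackenby's lower bound $\tfrac{v_{\mathrm{oct}}}{2}(t-2)$ for prime twist-reduced alternating diagrams, and the augmented-link upper bound.

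One point to tighten: you assert that Section~\ref{sec:GEOofUAL} shows the untwisted augmented link complement ``admits a decomposition into regular ideal octahedra with total volume $(4n+2)v_{\mathrm{oct}}$.''  What Theorem~\ref{thm:polyprops} actually gives is a decomposition of $\mathbb{S}^3\setminus L_{2n+1}$ into two identical right-angled ideal polyhedra determined by the circle packing of figure~\ref{pretzelcuspshape}; these are not themselves regular ideal octahedra.  The upper bound $2(2n+1)v_{\mathrm{oct}}$ for the augmented link volume (hence for each filled $M_{2n+1}^\sigma$) is correct, but it requires an additional step---either further subdividing the right-angled polyhedra and comparing the pieces to regular ideal octahedra, or invoking the known general bound $\mathrm{vol}(\mathbb{S}^3\setminus L)\le 2c\cdot v_{\mathrm{oct}}$ for an augmented link with $c$ crossing circles.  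Your last paragraph correctly flags that this polyhedral step is the main technical obstacle; just be careful not to read the octahedral decomposition directly out of Section~\ref{sec:GEOofUAL}.
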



\section{The Geometry of Untwisted Augmented Links}
\label{sec:GEOofUAL}

The goal of this section is to better understand the geometry and topology of our pretzel knots by realizing them as Dehn fillings of untwisted augmented links. Recall that $K_{2n+1} = K \left( \frac{1}{q_{1}}, \frac{1}{q_{2}}, \ldots, \frac{1}{q_{2n+1}} \right)$ with $q_{1}$ even, while the rest are odd and distinct.  We can realize each $K_{2n+1}$ as a Dehn surgery along specific components of a hyperbolic link $L_{2n+1}$. We want to find a lower bound on the normalized length of the Dehn filling slopes along each of these components in order to apply Corollary \ref{cor:syspreservednl}. We also can understand the cusp shape of $K_{2n+1}$ by first studying the cusp shape of this knot as a component of $L_{2n+1}$.  This will be used to determine that these knots are pairwise incommensurable in Section \ref{sec:commensurablity}.  The following analysis will help us determine the properties we are interested in.


\subsection{Augmented Links}
\label{subsubsec:AL_and_DF}
First, we will go over some basic properties of knots. We usually visualize a knot by its \emph{diagram}.  A diagram of a knot can be viewed as a $4$-valent planar graph $G$, with over-under crossing information at each vertex.  Here, we will need to consider the number of \textit{twist regions} in a given diagram.  A twist region of a knot diagram is a maximal string of bigons arranged from end to end.  A single crossing adjacent to no bigons is also a twist region. We also care about the amount of twisting done in each twist region.  We describe the amount of twisting in terms of half twists and full twists. A \textit{half twist} of a twist region of a diagram consists of a single crossing of two strands.  A \textit{full twist} consists of two half twists.  Now, we can define augmented links, which were introduced by Adams \cite{Ad} and have been studied extensively by Futer and Purcell in \cite{FP} and Purcell in \cite{Pu3}, \cite{Pu2}. For an introduction to augmented links, we suggest first reading \cite{Pu4}.

\begin{figure}[h]
\includegraphics[scale=0.60]{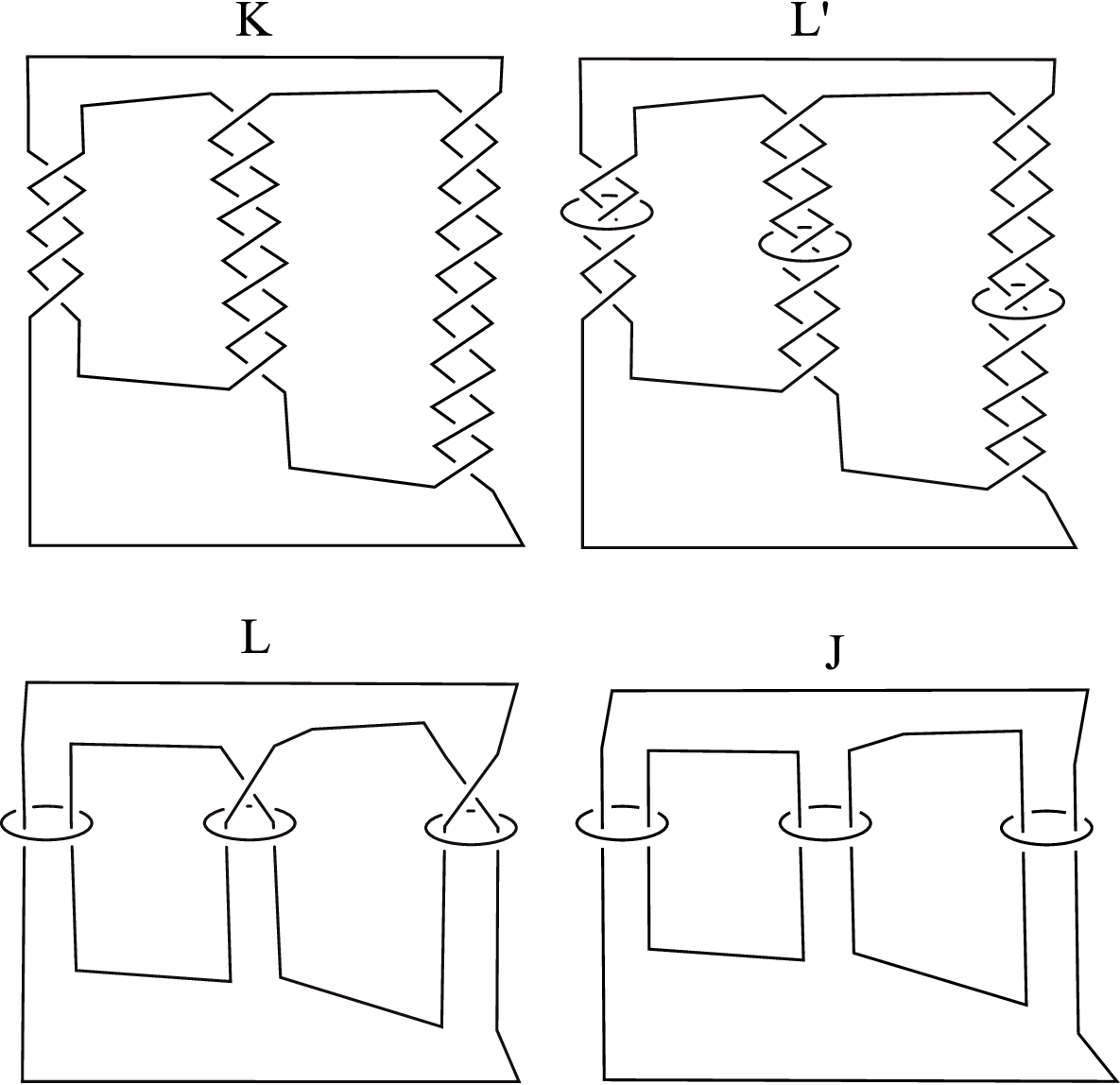}
\caption{Diagrams of a knot $K$ with three twist regions, the augmented link $L'$ with three crossing circles, the untwisted augmented link $L$, and the flat augmented link $J$.}
\label{augmentedpretzel}
\end{figure}

\begin{defn}[Augmented Links]
\label{def:AL}
Given a diagram of a knot or link $K$, insert a simple closed curve encircling each twist region. This gives a diagram for a new link $L'$, which is the \textit{augmented link} obtained from $K$. Obtain a new link $L$ by removing all full twists from each twist region in the diagram of $L'$.  We shall refer to the link $L$ as the \textit{untwisted augmented link}.  Each twist region now has either no crossings or a single crossing. If we remove all of the remaining single crossings from the twist regions, then we form the \textit{flat augmented link}, $J$. 
\end{defn}

The top two diagrams in figure \ref{augmentedpretzel} show a link $K$ with three twist regions and then the corresponding augmented link $L'$. The bottom two diagrams of figure \ref{augmentedpretzel} show the corresponding untwisted augmented link $L$ and flat augmented link $J$. The simple closed curves inserted to augment $K$ are called \textit{crossing circles}. The untwisted augmented link $L$ has a diagram consisting of crossing circle components bounding components from the link.  Near each crossing circle, the link component is embedded in the projection plane if the corresponding twist region contained only full twists.  Otherwise, there is a single half twist. $L$ is made up of two types of components: the crossing circles and the other components coming from the original link $K$.  We shall refer to these other components as the \textit{knot components} of $L$.  When $K$ is a knot, there is a single knot component in $L$, which will be the case for our work.

The $3$-manifolds $S^{3} \setminus L$ and $S^{3} \setminus L'$ actually are homeomorphic.  Performing $t_{i}$ full twists along the punctured disk bound by a crossing circle and then regluing this disk gives a homeomorphism between link exteriors.  Thus, if either $S^{3} \setminus L$ or $S^{3} \setminus L'$ is hyperbolic, then Mostow--Prasad rigidity implies that the two manifolds are isometric.

Next, we shall examine the polyhedral decompositions of certain untwisted augmented links.  We will do this by first examining such structures on the corresponding flat augmented links, which are almost the same, but easier to initially analyze.


\subsection{Ideal Polyhedral Decompositions of Untwisted Augmented Links}
\label{subsec:PDofUAL}
   
The polyhedral decompositions of untwisted augmented link complements have been thoroughly described in \cite{FP}.  This polyhedral decomposition was first described by Agol and Thurston in the appendix of \cite{La}, and many of its essential properties are highlighted in the following theorem.

\begin{thm}
\label{thm:polyprops}
Let $L$ be the untwisted augmented link corresponding to a link $K$.  Assume the given diagram of $K$ is prime, twist-reduced, and $K$ has at least two twist regions. Then $\mathbb{S}^{3} \setminus L$ has the following properties:

\begin{enumerate}
\item $\mathbb{S}^{3} \setminus L$ has a complete hyperbolic structure.
\item This hyperbolic $3$-manifold decomposes into two identical ideal, totally geodesic polyhedra, $I$ and $I'$, all of whose dihedral angles are $\frac{\pi}{2}$.
\item The faces of $I$ and $I'$ can be checkerboard colored, shaded and white.
\item Shaded faces come in pairs on each polyhedron, and they are constructed by peeling apart half of a single $2$-punctured disc bounded by a crossing circle. 
\item White faces come from portions of the projection plane bounded by knot strands. 
\end{enumerate}
\end{thm}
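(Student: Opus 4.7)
The plan is to follow the Agol--Thurston construction as described in Futer--Purcell \cite{FP}. The core idea is to decompose $\mathbb{S}^{3}\setminus L$ by cutting along two natural surfaces: the projection plane of the diagram (minus the knot strands and crossing circle punctures) and the twice-punctured \emph{crossing disks} bounded by the crossing circles. After cutting, I would identify the two resulting pieces $I$ and $I'$ as combinatorial ideal polyhedra, exchanged by reflection through the projection plane.

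To set this up, the white faces of $I$ arise from the regions of the projection plane cut out by the knot strands, while the shaded faces appear in pairs, one pair per crossing circle, from slicing each crossing disk in half along the projection plane. The ideal vertices come from the cusps of $L$: knot strands contribute vertices along the boundaries of the white faces, and the two endpoints of each crossing circle contribute vertices where the paired shaded faces meet. The checkerboard coloring and the structure described in (3)--(5) is immediate from how the faces are produced.

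The main step is then to realize this combinatorial polyhedron hyperbolically with all dihedral angles equal to $\pi/2$. Here I would invoke Andreev's theorem for ideal right-angled polyhedra: the hypotheses that the diagram is prime, twist-reduced, and has at least two twist regions translate exactly into the combinatorial conditions (absence of prismatic $3$-circuits and of nontrivial prismatic $4$-circuits) required for Andreev's theorem to supply a unique realization. This step is the main obstacle; verifying that primeness kills off potential bigon white faces, that being twist-reduced rules out nontrivial prismatic $4$-circuits separating pairs of twist regions, and that the two-twist-region hypothesis prevents degeneration, all requires a careful combinatorial translation. The $90^{\circ}$ dihedral angles then come out naturally because the projection plane meets each crossing disk orthogonally in the symmetric model.

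Finally, I would reassemble the manifold by gluing $I$ and $I'$ along their white faces via reflection through the projection plane, and along each pair of shaded faces by the appropriate crossing-disk identification (the identity if the corresponding twist region has no half-twist, or a $180^{\circ}$ rotation swapping the two crossing-circle endpoints if it has one). All gluings are isometries between faces meeting at $\pi/2$ dihedral angles, so the quotient carries a smooth hyperbolic metric; completeness follows from the standard check that the horoball at each ideal vertex patches together consistently around the cusp, and finite volume is inherited from the polyhedra. This yields (1) simultaneously with (2)--(5).
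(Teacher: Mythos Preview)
The paper does not actually supply its own proof of this theorem; it is stated as a known result, attributed to the Agol--Thurston appendix of \cite{La} and the exposition in Futer--Purcell \cite{FP}. Your sketch is precisely the standard argument from those references, so there is no independent ``paper's proof'' to compare against---you have correctly reconstructed the intended source.

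One small comment: in the Futer--Purcell treatment the hyperbolic realization of the polyhedra is usually phrased in terms of the circle packing theorem rather than Andreev's theorem directly (the paper itself leans heavily on the circle packing viewpoint in the subsections that follow). These are essentially equivalent here, since both produce the unique right-angled ideal polyhedron with the given combinatorics, but if you want your write-up to dovetail with the cusp-shape analysis later in the section you may prefer to state the realization step in circle-packing language.
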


Here, we will briefly describe this decomposition and the resulting circle packings, with emphasis on our untwisted augmented link complements, $N_{2n+1} = \mathbb{S}^{3} \setminus L_{2n+1}$.  We direct the reader to \cite[Sections 6 and 7]{Pu2} for more details on cusp shape analysis of untwisted augmented link complements.

\begin{figure}[ht]
\includegraphics[scale=0.50]{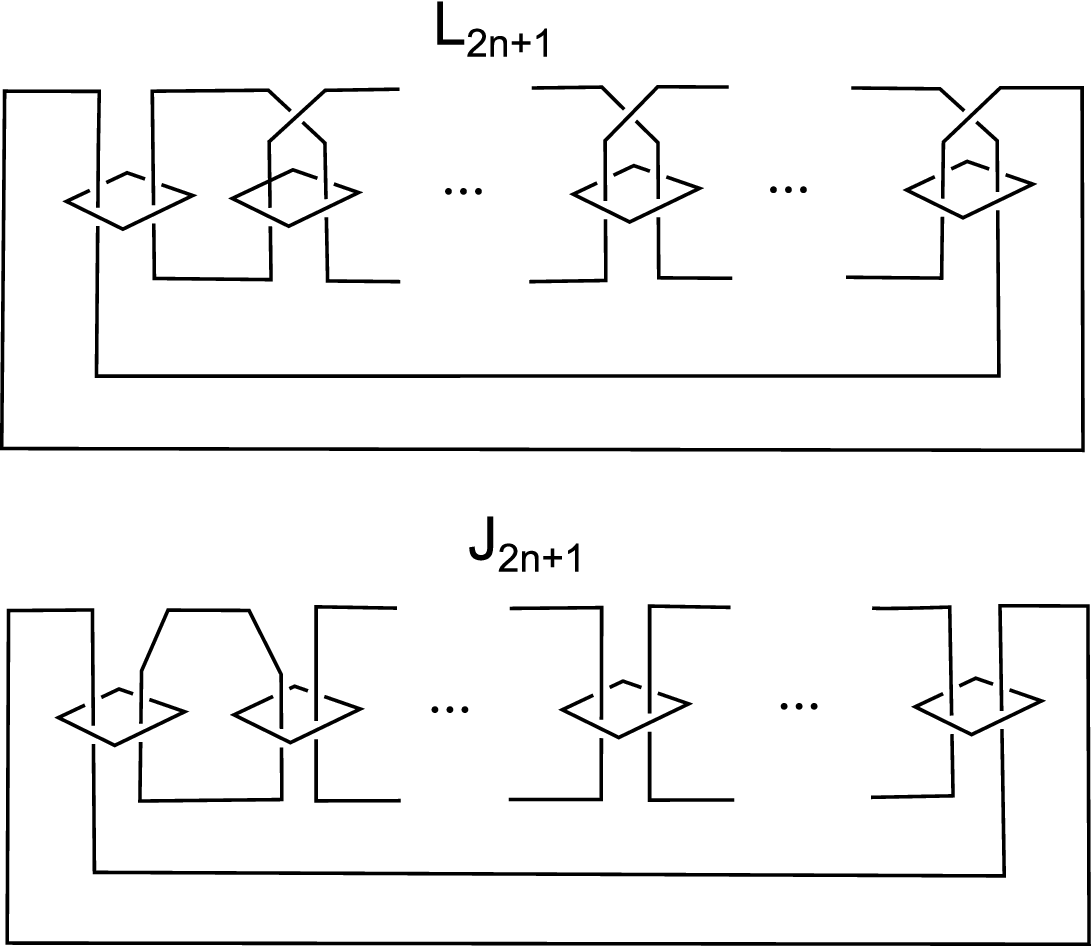}
\caption{The untwisted augmented link $L_{2n+1}$ and the flat augmented link $J_{2n+1}$.}
\label{flatanduntwisted}
\end{figure}

First, consider $\mathbb{S}^{3} \setminus J_{2n+1}$, where $J_{2n+1}$ is the flat augmented link, whose diagram is shown in figure \ref{flatanduntwisted}.  In the diagram of $J_{2n+1}$, the knot strands all lie on the projection plane. To subdivide $\mathbb{S}^{3} \setminus J_{2n+1}$ into polyhedra, first slice it along the projection plane, cutting $\mathbb{S}^{3}$ into two identical $3$-balls.  These identical polyhedra are transformed into ideal polyhedra by collapsing strands of $J_{2n+1}$ to ideal vertices.  These ideal polyhedra have two types of faces: shaded faces and white faces, described in the above theorem.

To go from an ideal polyhedral decomposition of $\mathbb{S}^{3} \setminus J_{2n+1}$ to one for $\mathbb{S}^{3} \setminus L_{2n+1}$, we just have to introduce a half-twist into our gluing at each shaded face where a crossing circle bounds a single twist. Depicted in figure \ref{polydecomp} below is an ideal polyhedral decomposition of the flat augmented link, $J_{2n+1}$.

\begin{figure}[h]
\includegraphics[scale=0.60]{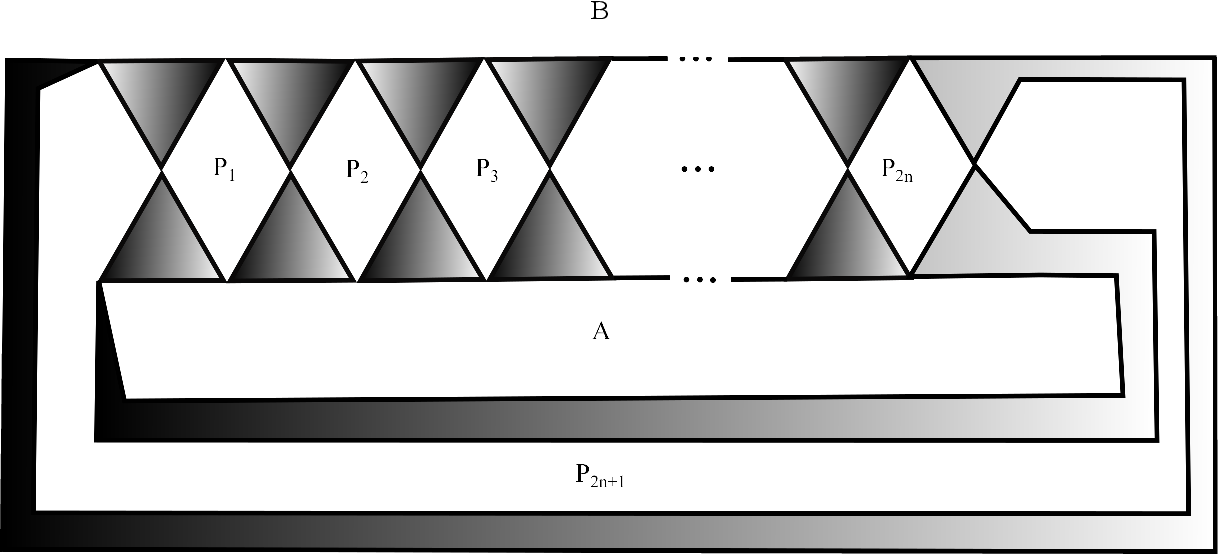}
\caption{The polyhedral decomposition of $J_{2n+1}$.}
\label{polydecomp}
\end{figure}

In \cite[Section 6]{Pu2}, Purcell describes a circle packing associated to the white faces of the polyhedra (and a dual circle packing associated to the shaded faces). Figure \ref{pretzelcuspshape} depicts the circle packing coming from the white faces of the polyhedral decomposition of $J_{2n+1}$.

\begin{figure}[h]
\includegraphics[scale=0.60]{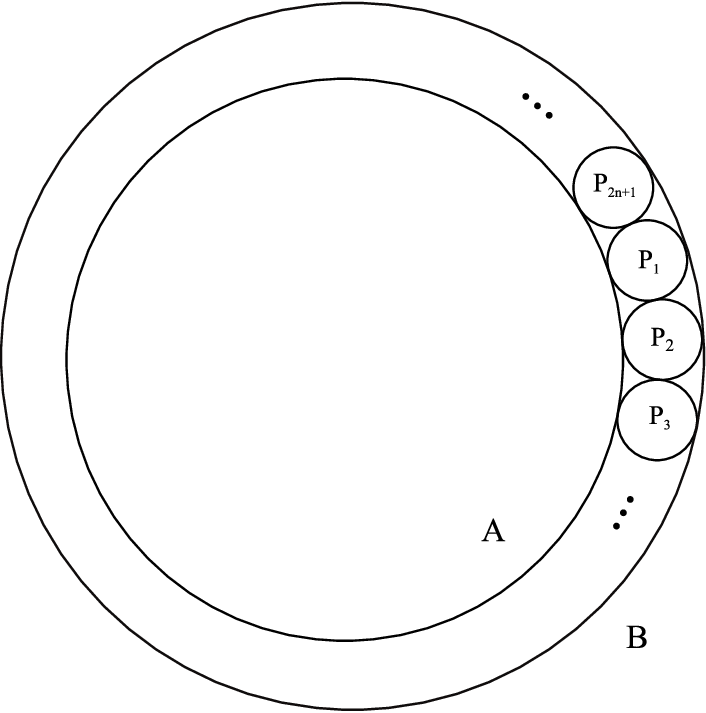}
\caption{The resulting circle packing for $J_{2n+1}$}
\label{pretzelcuspshape}
\end{figure}

The decomposition of $\mathbb{S}^{3} \setminus L_{2n+1}$ is determined by this circle packing. First, slice off half-spaces bounded by geodesic hemispheres in $\mathbb{H}^{3}$ corresponding to each circle in the circle packing.  These give the geodesic white faces of the polyhedron. The shaded faces are obtained by slicing off hemispheres in $\mathbb{H}^{3}$ corresponding to each circle of the dual circle packing. Finally, we just need to make sure we glue up most of the shaded faces with a half-twist. Only the two shaded faces corresponding to the first twist region are glued up without a half-twist.

A careful analysis of this polyhedral decomposition also leads to a canonical method for cusp expansion. Given any $\mathbb{S}^{3} \setminus L_{2n+1}$, we have $2n+2$ cusps corresponding to the $2n+2$ components of the link $L_{2n+1}$ (the $2n+1$ crossing circles and the single knot component). We initially start with disjoint horoball neigborhoods of these cusps and then follow the expansion instructions described in \cite[Section $3$]{FP}: given any ordering of the cusps, expand them one at a time until a horoball neighborhood $C$ meets another horoball neighborhood or $C$ meets the \textit{midpoint} of some edge of our polyhedral decomposition. See \cite[Definition $3.6$]{FP} for the definition of midpoint in this context. This choice of cusp expansion results in the following theorem, which is now stated in terms of our untwisted augmented link complements.

\begin{thm}\cite{FP}
\label{thm:cuspexpansion}
Given any $\mathbb{S}^{3} \setminus L_{2n+1}$, expand the cusps as described above. This results in a unique horoball packing where each boundary horosphere of a horoball neighborhood meets the midpoint of every edge asymptotic to its ideal point. 
\end{thm}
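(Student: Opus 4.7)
The plan is to prove the theorem by identifying, for each cusp $c$ of $\mathbb{S}^{3} \setminus L_{2n+1}$, a canonical horoball $H_c$ determined by the polyhedral decomposition of Theorem \ref{thm:polyprops}, and then checking that the cusp expansion procedure produces exactly this packing $\{H_c\}$, independent of the chosen ordering.

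First, I would work in the upper half-space model of $\mathbb{H}^{3}$. Lift the decomposition into $I$ and $I'$ and pick an ideal vertex $\tilde v$ lifting $c$, normalized to sit at $\infty$. Then every edge of the polyhedral decomposition asymptotic to $\tilde v$ is a vertical geodesic, with Euclidean endpoint in $\mathbb{C}$. Using that all dihedral angles of $I$ and $I'$ are $\pi/2$, and that the two polyhedra are identical and glued by face-pairing isometries, I would argue that the Euclidean endpoints of these vertical edges lie on the white-face/shaded-face circle packings associated to $\tilde v$ (as in Figure \ref{pretzelcuspshape}), and that the relevant ``opposite'' faces across each edge give the same Euclidean height at which the edge's midpoint sits. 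Consequently, the midpoints of all edges incident to $\tilde v$ share a common Euclidean height $h_v$, so the horizontal horosphere $\{z=h_v\}$ passes through all of them at once. Define $H_c$ to be the horoball descended to $\mathbb{S}^{3} \setminus L_{2n+1}$ from the half-space $\{z \geq h_v\}$.

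Next I would verify that $\{H_c\}$ is an embedded horoball packing. Adjacent pairs $H_{c},H_{c'}$ are tangent at the midpoint of any edge joining $c$ to $c'$, because the midpoint condition agrees from either endpoint. Non-adjacent horoballs are separated by the totally geodesic white and shaded faces of $I$ and $I'$. Finally I would run the expansion procedure and compare: when the first cusp $c_1$ is expanded, its horoball reaches every midpoint at $c_1$ simultaneously by the step above, so it stops precisely at size $H_{c_1}$; for a later cusp $c_k$, the stopping trigger — either meeting an already-expanded $H_{c_j}$ or meeting a fresh midpoint — occurs at the canonical size $H_{c_k}$, again by simultaneity. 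Thus the output of the procedure is $\{H_c\}$ regardless of ordering, and the packing is unique.

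The hard part will be the first step: justifying that the midpoints of all edges emanating from a single ideal vertex lie on a common horosphere. This is essentially a rigidity statement about the right-angled polyhedral decomposition. Unpacking it rests on the fact that each cusp cross-section in $I$ and $I'$ is a Euclidean rectangle (all dihedral angles being $\pi/2$), so the faces opposite $\tilde v$ — which determine where each vertical edge is ``cut off'' and hence where its midpoint sits — meet the vertical edges at a uniform Euclidean height. Once this geometric fact is established, the remainder of the argument is an inductive bookkeeping exercise on the order of expansion.
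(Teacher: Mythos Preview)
This theorem is not proved in the paper --- it is quoted from Futer--Purcell \cite{FP}, with the paper pointing to their Definition~3.6 for the meaning of ``midpoint'' and to their Section~3 for the result itself. So there is no proof in the present paper to compare against.

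Your outline is structurally sound: define a canonical horoball for each cusp, check that these form a packing tangent exactly at edge-midpoints, then observe that the expansion procedure halts at this configuration regardless of order. The genuine gap is the step you yourself flag as ``the hard part'': that all midpoints of edges emanating from a single ideal vertex lie on a common horosphere. Your proposed justification --- that cusp cross-sections are Euclidean rectangles, so ``the faces opposite $\tilde v$ meet the vertical edges at a uniform Euclidean height'' --- does not follow. The rectangular cross-section records how the four \emph{vertical} faces through $\tilde v$ are arranged; it says nothing directly about the Euclidean heights at which the opposite, non-vertical hemispherical faces cut the downward edges, since those hemispheres have varying radii coming from circles of different sizes in the packing. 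The argument in \cite{FP} uses their specific definition of midpoint together with the reflective symmetry of each polyhedron across its totally geodesic white faces; it is this symmetry, not the right-angled condition alone, that forces the midpoints to a common height. Without invoking that definition and symmetry explicitly, your key claim remains an assertion rather than a proof.
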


The fact that this cusp expansion is unique will be essential for analyzing cusp neighborhoods and horoball packings in Section \ref{subsec:NLonC} and in Proposition \ref{prop:no_rigid}.


\subsection{Normalized Lengths on Cusps}
\label{subsec:NLonC}

For this section, we will specialize our analysis to just our pretzel knot complements $M_{2n+1} = \mathbb{S}^{3} \setminus K_{2n+1}$ which result from Dehn filling the $2n+1$ crossing circles, $\left\{ C_{i} \right\}_{i=1}^{2n+1}$, of $N_{2n+1} = \mathbb{S}^{3} \setminus L_{2n+1}$. Recall that $K_{2n+1}$ has $2n+1$ twist regions with $q_{i}$ crossings in the $i^{th}$ twist region, and in $L_{2n+1}$, exactly $2n$ of these crossing circle enclose a single crossing since $2n$ of our $q_{i}$ are odd. To apply Corollary \ref{cor:syspreservednl}, we will need to examine normalized lengths of particular slopes on the cusps in $N_{2n+1}$ corresponding to crossing circles. In \cite[Proposition 6.5]{Pu2}, Purcell gives the general case for providing bounds on the normalized lengths $\widehat{L}(s_{i})$ of Dehn filling crossing circles of an untwisted augmented link. In the general case, re-inserting $q_{i}$ crossings gives $\widehat{L}(s_{i}) \geq \sqrt{q_{i}}$. By restricting to untwisted augmented links corresponding to hyperbolic pretzel knots, we are able to provide a substantial improvement on this bound, highlighted in the proposition given below.

\begin{prop}
\label{prop:NL}
On the cusps of $N_{2n+1}$ corresponding to crossing circles, we have the following normalized lengths:
Let $s_{i}$ be the slope such that Dehn filling $N_{2n+1}$ along $s_{i}$ re-inserts the $q_{i}-1$ or $q_{i}$ crossings at that twist region.  Then $\widehat{L}\left(s_{i}\right) \geq   \sqrt{\frac{(2n-1)(1+q_{i}^{2})}{4n}}$. In particular, if $n \geq 2$, we have that $\widehat{L}(s_{i}) \geq \sqrt{ \frac{3(1+q_{i}^{2})}{8}} $.
\end{prop}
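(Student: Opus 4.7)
The plan is to pin down the Euclidean structure on each crossing-circle cusp of $N_{2n+1}$ by combining the canonical horoball expansion of Theorem~\ref{thm:cuspexpansion} with the explicit circle packing for $J_{2n+1}$ shown in Figure~\ref{pretzelcuspshape}, and then to compute $\widehat{L}(s_i)$ directly from this data. First I would invoke Theorem~\ref{thm:cuspexpansion} to obtain the unique maximal cusp expansion for $N_{2n+1}$. The two shaded faces of the ideal polyhedron $I$ associated with a crossing circle $C_i$ peel open the twice-punctured disk bounded by $C_i$, which forces the cusp torus $T_i$ to be a Euclidean rectangle whose sides are the meridian $\mu_i$ of $C_i$ and a preferred longitude $\lambda_i$ representing one full twist. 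The horoball-packing analysis from \cite[Section~6]{Pu2}, specialized to our setting, then yields explicit expressions for $|\mu_i|$ and $|\lambda_i|$ in terms of the circle packing data for $J_{2n+1}$.

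Next I would identify the slope $s_i$ in this basis. Because $L_{2n+1}$ already carries a single crossing at $C_i$ exactly when $q_i$ is odd, re-inserting the remaining $q_i - 1$ (odd case) or $q_i$ (even case) crossings corresponds to
\[
s_i \;=\; \mu_i + t_i\lambda_i, \qquad t_i \;=\; \tfrac{q_i-1}{2} \text{ or } \tfrac{q_i}{2}.
\]
Since $\mu_i \perp \lambda_i$ on the rectangle $T_i$, one has
\[
\widehat{L}(s_i)^{2} \;=\; \frac{|\mu_i|^{2} + t_i^{2}|\lambda_i|^{2}}{|\mu_i|\,|\lambda_i|}.
\]

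The main obstacle is improving the coarse generic estimate $\widehat{L}(s_i) \geq \sqrt{q_i}$ of \cite[Proposition~6.5]{Pu2} to a bound that is asymptotically linear in $q_i$. For this I would exploit the order-$(2n+1)$ cyclic symmetry of the circle packing for $J_{2n+1}$: the $2n$ remaining crossing-circle cusps impose a lower bound on the Euclidean sizes of the white-face circles surrounding the ideal vertex at $C_i$, which in turn bounds both $|\mu_i|$ and $|\lambda_i|$ from below with an aspect ratio controlled by $n$. The factor $\tfrac{2n-1}{4n}$ should fall out of this bookkeeping, since $T_i$ must share space with $2n$ other crossing-circle cusps along with the single knot-component cusp in the canonical packing. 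Once those pretzel-specific cusp-shape estimates are in hand, substituting them into the displayed Pythagorean formula and simplifying gives the general bound, and the $n \geq 2$ specialization is then immediate upon evaluating $\tfrac{2n-1}{4n}$ at $n = 2$.
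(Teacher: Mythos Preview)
Your outline follows the paper's overall strategy---canonical cusp expansion plus the circle packing for $J_{2n+1}$---but there is one error and one genuine gap.

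First, the claim that $\mu_i \perp \lambda_i$ is false at the $2n$ crossing circles carrying a half-twist. In the natural orthogonal basis $(s,w)$ coming from shaded and white faces of the polyhedra, the longitude is $2s$ but the meridian is $w \pm s$ (not $w$) when a half-twist is present; see \cite[Lemma~2.6]{FP}. Thus $(\mu_i,\lambda_i)$ is not an orthogonal basis at those cusps, your Pythagorean expression for $|s_i|^2$ is incorrect, and the denominator $|\mu_i|\,|\lambda_i|$ is not the torus area. The fix is to work in the $(s,w)$ basis throughout: the filling slope is then $w \pm q_i s$ uniformly in both parities of $q_i$, and with $\ell(s)=1$ one gets the correct formula $\widehat{L}(s_i)^2 = \bigl(\ell(w)^2 + q_i^2\bigr)/\bigl(2\ell(w)\bigr)$.

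Second, and more seriously, the entire improvement over the generic $\sqrt{q_i}$ bound of \cite[Proposition~6.5]{Pu2} rests on a sharp upper bound $\ell(w) \leq \tfrac{2n}{2n-1}$, and your proposal contains no mechanism for obtaining it: ``the factor $\tfrac{2n-1}{4n}$ should fall out of this bookkeeping'' is not an argument. The paper sends the crossing-circle vertex $P_i \cap P_{i+1}$ to $\infty$, so that $P_i$ and $P_{i+1}$ become parallel lines a distance $\ell(s)=1$ apart, with $A$ and $B$ becoming tangent circles of diameter $1$ between them and the remaining $2n-1$ circles $P_j$ ($j \neq i,i+1$) packed between $A$ and $B$. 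A two-fold reflective symmetry of this rectangle forces one circle $P_j^{\ast}$ to be centered on the midline, shows it has the smallest diameter among the $P_j$, and gives $\ell(w) = 1 + D(P_j^{\ast})$. Since the $2n-1$ diameters sum to $\ell(s)=1$, one concludes $D(P_j^{\ast}) \leq \tfrac{1}{2n-1}$ and hence $\ell(w) \leq \tfrac{2n}{2n-1}$. This explicit packing estimate, not a vague appeal to cusps ``sharing space,'' is the heart of the proposition, and it is absent from your plan.
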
 

\begin{proof}
Pictured in figure \ref{pretzelcuspshape} is a circle packing for $J_{2n+1}$ coming from the white faces. There also exists a circle packing for the shaded faces, which is dual to the circle packing coming from the white faces. These two circle packings also determine the same circle packings for $L_{2n+1}$ since the only difference between $L_{2n+1}$ and $J_{2n+1}$ is how the two ideal polyhedra are glued together. Much of what follows in the next two paragraphs is done in \cite[Sections $2$ and $3$]{FP}. In their work, the cusp shapes are analyzed with respect to any augmented link, while we will specialize to our $L_{2n+1}$.  

First, let us recall our polyhedra obtained in Section \ref{subsec:PDofUAL}.  Each cusp will be tiled by rectangles given by the intersection of the cusp with the totally geodesic white and shaded faces of the polyhedra.  Two opposite sides of each of these rectangles come from the intersection of the cusp with shaded faces of the polyhedra (corresponding with the $2$-punctured disc in the diagram of $L_{2n+1}$), and the other two sides from white faces.  Call these sides shaded sides and white sides, respectively. We can make an appropriate choice of cusp neighborhoods as in Theorem \ref{thm:cuspexpansion}. This allows us to consider the geometry of our rectangles tiling a cusp.  

Our crossing circle cusp is tiled by two rectangles, each rectangle corresponding with a vertex in one of the polyhedra.  In terms of our circle packing of $\mathbb{S}^{2}$, this vertex corresponds with a point of tangency of two circles.  Consider the point of tangency given by $P_{i} \cap P_{i+1}$, which corresponds to one of the two identical rectangles making up the crossing circle cusp $C_{i+1}$. By the rotational symmetry of the circle packing in figure \ref{pretzelcuspshape}, all of these rectangles (along with their circle packings) are in fact isometric.  Thus, taking a step along a shaded side will be the same for any such rectangle, and similarly for stepping along a white side. Let $s$ represent taking one step along a shaded face and $w$ represent taking one step along a white face.  Each torus cusp, $T$, has universal cover $\tilde{T} = \mathbb{R}^{2}$.  $\tilde{T}$ contains a rectangular lattice coming from the white and shaded faces of our polyhedron. We let $(s,w)$ be our choice of basis for this $\mathbb{Z}^{2}$ lattice.   

Now, we shall examine the normalized length in terms of our longitudes and meridians of the cusps corresponding to crossing circles.  Lemma $2.6$ from \cite{FP} tells us that the meridian is given by $w \pm s$ when there is a half-twist, and the meridian is $w$ without the half-twist. In either case, the longitude is given by $2s$. When $q_{i}$ is odd, $\frac{q_{i} - 1}{2}$ full twists were removed in constructing $L_{2n+1}$, so the surgery slope for the $i^{th}$ crossing circle will be $(1, \frac{q_{i} - 1}{2})$.  Thus, the slope $s_{i}$ is given by $(w \pm s) \pm \frac{q_{i}-1}{2} (2s) = w \pm q_{i}s$, when $q_{i}$ is odd.  For the single even $q_{i}$, the surgery slope is $(1, \frac{q_{i}}{2})$ and the slope is given by $w \pm \frac{q_{i}}{2} (2s) = w \pm q_{i}s$; see \cite[Theorem 2.7]{FP}. In either case, the normalized length of $s_{i}$ is:
\begin{center}
$\widehat{L}(s_{i}) = \frac{ \sqrt{ \ell(w)^{2} + q_{i}^{2}\ell(s)^{2} } } {\sqrt{ 2\ell(w)\ell(s)}}$.
\end{center}

Here, $\ell(w)$ and $\ell(s)$ denote the lengths of $w$ and $s$ respectively, on our choice of cusp neighborhoods. To bound the normalized length, we need to bound $\ell(w)$ and $\ell(s)$.  We shall use our circle packing to obtain such bounds. Consider the tangency given by $P_{i} \cap P_{i+1}$, which corresponds to one of the two rectangles making up our cusp. Note that, $P_{i}$ is also tangent to circles $P_{i-1}$, $A$, and $B$, while $P_{i+1}$ is also tangent to $P_{i+2}$, $A$, and $B$ ($i$ values taken mod $2n+1$).  Apply a M\"{o}bius transformation taking $P_{i} \cap P_{i+1}$ to infinity. This takes the two tangent circles $P_{i}$ and $P_{i+1}$ to parallel lines, as in figure \ref{pretzelcuspshapes2}. This also gives the similarity structure of the rectangle under consideration.  Our choice of cusp neighborhoods results in $\ell(s) =1$.  This makes the circles $A$ and $B$ lying under the dashed lines in figure \ref{pretzelcuspshapes2} have diameter $1$.  Since circles in our circle packing can not overlap, this forces $\ell(w) \geq 1$.  Note that, the dashed lines come from our dual circle packing corresponding to shaded faces.

\begin{figure}[h]
\includegraphics[scale=0.75]{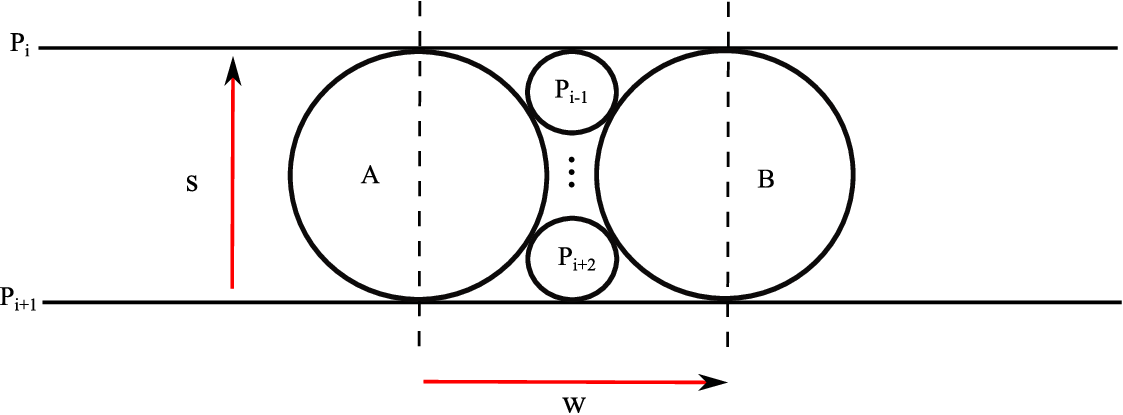}
\caption{The cusp shape of one of the rectangles tiling our crossing circle cusp.  This rectangle is determined by sending the tangency point $P_{i} \cap P_{i+1}$ to $\infty$}.
\label{pretzelcuspshapes2}
\end{figure}

Now, we just need to find an upper bound for $\ell(w)$. Again, consider figure \ref{pretzelcuspshapes2}.  Since $P_{j}$ is tangent to $A$, $B$, $P_{j-1}$, and $P_{j+1}$ for $1 \leq j \leq 2n+1$, all the circles $P_{1}, \dots, P_{i-1}, P_{i+2}, \dots, P_{2n+1}$ lie in between our parallel lines and in between $A$ and $B$, stacked together as depicted in figure \ref{pretzelcuspshapes2} to meet our tangency conditions. Notice, that this circle packing of one of these rectangles has two lines of symmetry: the line $l_{w}$ going through the two $w$ sides in their respective midpoints, and the line $l_{s}$ going through the two $s$ sides in their respective midpoints. $l_{w}$ is a translate of $s$ and $l_{s}$ is a translate of $w$. Reflecting across either of these lines preserves our circle packing. In particular, $l_{w}$ must intersect each $P_{j}$, $j \neq i, i+1$ in a diameter. Let $D(P_{j})$ denote the diameter of circle $P_{j}$.  Then $\displaystyle\sum\limits_{j \neq i, i+1} D(P_{j}) = l(s) = 1$.

Next, the fact that we have symmetries about both $l_{s}$ and $l_{w}$ and an odd number of $P_{j}$ packed in between $A$ and $B$ implies that one of our $P_{j}$'s is centered at $l_{s} \cap l_{w}$. Call this circle $P_{j}^{\ast}$. Note that, $l_{s}$ intersects $A$, $B$, and $P_{j}^{\ast}$ in their respective centers. Thus, $\ell(w) = \ell(l_{s}) = \frac{D(A)}{2} + \frac{D(B)}{2} + D(P_{J}^{\ast}) = 1 + D(P_{J}^{\ast})$.

Now, we claim that $P_{j}^{\ast}$ has the minimal diameter amongst $P_{j}$, $j \neq i, i+1$. This follows from our tangency conditions: each such $P_{j}$ must be tangent to both $A$ and $B$.  The diameter of $P_{j}^{\ast}$ obviously minimizes the distance between $A$ and $B$. For any other $P_{j}$, consider the line $l_{j}$ in $P_{j}$ from $P_{j} \cap A$ to $P_{j} \cap B$.  Then we have that $D(P_{j}^{\ast}) \leq \ell(l_{j}) \leq D(P_{j})$. The first inequality holds because $D(P_{j}^{\ast})$  minimizes distance from $A$ to $B$, while the second inequality is obviously true for any circle. So, $D(P_{j}^{\ast})$ must be the smallest such diameter.

Finally, we have $1 = \ell(s) = \displaystyle\sum\limits_{j \neq i, i+1} D(P_{j})  \geq \displaystyle\sum\limits_{j \neq i, i+1} D(P_{j}^{\ast}) = (2n-1)D(P_{j}^{\ast})$, which implies that $D(P_{j}^{\ast}) < \frac{1}{2n-1}$. This helps give us the desired upper bound on $\ell(w)$:
\begin{center}
$\ell(w) = \ell(l_{s}) = 1 + D(P_{J}^{\ast}) \leq 1 + \frac{1}{2n-1} = \frac{2n}{2n-1}$.
\end{center}

With these bounds, we have that 
\begin{center}
$\widehat{L}(s_{i}) = \frac{ \sqrt{ \ell(w)^{2} + q_{i}^{2}\ell(s)^{2} } } {\sqrt{ 2\ell(w)\ell(s)}} = \frac{ \sqrt{ \ell(w)^{2} + q_{i}^{2} } }{ \sqrt{2\ell(w)}} \geq \frac { \sqrt {1+q_{i}^{2} } } {\sqrt{2\ell(w)}} \geq \frac { \sqrt {1+q_{i}^{2} } } {\sqrt{\frac{4n}{2n-1}}} = \sqrt{\frac{(2n-1)(1+q_{i}^{2})}{4n}}$. 
\end{center} 

In particular, if $n \geq 2$, we have that $\widehat{L}(s_{i}) \geq \sqrt{ \frac{3(1+q_{i}^{2})}{8}} $. 
\end{proof}

We will also need to analyze the cusp shape of the one cusp $C$ corresponding to the knot component of $L_{2n+1}$.  Such an analysis will play an important role in determining that our knot complements are not commensurable with one another.  We will see that the tiling of the cusp $C$ by rectangles which come from truncating certain vertices of our ideal polyhedral decomposition has a number of nice properties, highlighted in the following proposition.

\begin{prop}
\label{prop:Knotcusp}
Let $C$ be the cusp corresponding to the knot component of $L_{2n+1}$. This cusp has the following properties:
\begin{enumerate}
\item There are $4(2n+1)$ rectangles tiling this cusp, half of which come from each ideal polyhedron.
\item This cusp shape is rectangular (and not a square).
\item All of these rectangles, along with their circle packings, are isometric to one another.
\end{enumerate}
\end{prop}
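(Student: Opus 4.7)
The plan is to read off all three parts from Purcell's polyhedral decomposition of $N_{2n+1}$ (Section \ref{subsec:PDofUAL}), the circle packing in Figure \ref{pretzelcuspshape}, and the symmetries of that packing.

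For Part (1), I would identify the ideal vertices of a single polyhedron $I$ that lie on the knot cusp $C$. The circle packing for $I$ has three families of tangencies: $P_i \cap P_{i+1}$, $P_i \cap A$, and $P_i \cap B$ for $i = 1, \ldots, 2n+1$. The proof of Proposition \ref{prop:NL} already identifies each $P_i \cap P_{i+1}$ as a vertex on the crossing-circle cusp $C_{i+1}$, so the remaining $2(2n+1)$ tangencies $P_i \cap A$ and $P_i \cap B$ must lie on $C$. Each ideal vertex contributes one rectangle to the tiling of $C$; doubling by the isometric partner polyhedron $I'$ yields $4(2n+1)$ rectangles in total, half from each polyhedron.

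For Part (3), I would exhibit symmetries of the polyhedron acting transitively on the knot-cusp vertices. The packing admits the $(2n+1)$-fold cyclic rotation $\rho: P_i \mapsto P_{i+1}$ fixing $A, B$, and a reflection $\sigma$ interchanging $A$ with $B$ coming from the symmetric arrangement of the packing on $S^2$. Both extend to M\"obius transformations preserving the entire packing, and hence to isometries of the polyhedron in $\mathbb{H}^3$. Under $\langle \rho, \sigma \rangle$, the set $\{P_i \cap A\} \cup \{P_i \cap B\}$ forms a single orbit, and since these isometries preserve the full circle packing, they carry the local circle-packing neighborhood around each tangency to the local neighborhood around any other. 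Hence the rectangles from $I$, together with their local circle packings, are pairwise isometric. The reflection across the projection plane interchanges $I$ and $I'$ isometrically, extending the isometry class to all $4(2n+1)$ rectangles.

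For Part (2), rectangularity can be seen from the tiling structure: within each polyhedron the knot-cusp rectangles split into an ``$A$-row'' (at the vertices $P_i \cap A$) and a ``$B$-row'' (at the vertices $P_i \cap B$), each row cyclically closed by the edges of the $(2n+1)$-gons $A$ or $B$, and the two rows are joined across shaded face gluings. Because every face gluing is a rigid motion respecting the checkerboard coloring and every dihedral angle is $\pi/2$, the cusp lattice admits a basis consisting of one generator running along a row (via white-face edges) and another running transversely between rows (via shaded gluings); these two directions are perpendicular, so the cusp lattice is rectangular. For the ``not a square'' claim I would compute the two side lengths using the normalization $A = \{|z|=1\}$, $B = \{|z|=k\}$ with $k = \frac{1-\sin(\pi/(2n+1))}{1+\sin(\pi/(2n+1))}$ and each $P_i$ of radius $(1-k)/2$. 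The M\"obius transformation $z \mapsto 1/(z-1)$ sends $P_1 \cap A = (1,0)$ to $\infty$ and makes $A, P_1$ parallel vertical lines at Euclidean distance $\frac{1+k}{2(1-k)}$. An analogous computation with the two shaded (dual) circles through $P_1 \cap A$ yields the perpendicular side length, and summing over the numbers of rectangles in each direction gives the two total lattice lengths; verifying their strict inequality for $n \geq 2$ completes the proof.

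The main obstacle will be the explicit computation in Part (2): identifying the two shaded circles through $P_1 \cap A$ in the dual packing, computing the perpendicular side length from them, and checking that the two total lattice lengths differ for every $n \geq 2$. Parts (1) and (3) are essentially combinatorial or symmetry statements falling out directly from Purcell's polyhedral description, and the rectangularity in (2) is a structural consequence of the right-angled checkerboard structure.
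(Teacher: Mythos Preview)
Your approach to Parts (1) and (3) is essentially the same as the paper's: count tangencies $P_i\cap A$ and $P_i\cap B$ for (1), and use the rotational symmetry together with an $A\leftrightarrow B$ reflection for (3). The paper phrases the $A\leftrightarrow B$ symmetry as reflection through the circle passing through the points $P_i\cap P_j$, but the content is the same.

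Part (2) is where your proposal diverges from the paper, and it contains a gap. Your rectangularity argument (``one generator along a row, another transversely between rows, and these are perpendicular'') is correct for the \emph{flat} augmented link $J_{2n+1}$, but $L_{2n+1}$ has $2n$ half-twists. Each half-twist shears the shaded-face gluing by one step in the $s$-direction, so the longitude is not $2(2n+1)w$ but $2(2n+1)w+2ks$ for some integer $k$ (this is \cite[Lemma~2.6]{FP}, which the paper invokes). The cusp is still rectangular only because the knot passes through each crossing circle twice, so the total shear is an \emph{even} multiple of $s$, hence an integer multiple of the meridian $2s$; a change of basis then recovers the rectangular form. Your argument skips this step entirely.

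For ``not a square'' your proposed explicit M\"obius computation with $k=\frac{1-\sin(\pi/(2n+1))}{1+\sin(\pi/(2n+1))}$ and the dual shaded circles is far more work than needed, and you correctly flag it as the main obstacle. The paper avoids it completely: once you know $\ell(s)=1$ and $1<\ell(w)<2$ (already established in the proof of Proposition~\ref{prop:NL}, and again in Lemma~\ref{lemma:rectanglesize}), the rectangular basis has side lengths $2\ell(s)=2$ and $2(2n+1)\ell(w)>2(2n+1)\geq 10$, which are clearly unequal. No coordinate computation is required.
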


\begin{proof}
Theorem \ref{thm:cuspexpansion} gives us an appropriate choice of cusp neighborhoods, which allows us to fix the geometry of our cusp $C$.

$\textbf{(1):}$ Consider the ideal polyhedral decomposition in figure \ref{polydecomp} for $J_{2n+1}$. There are $2n+1$ disks corresponding to crossing circles, and we peel each of these disks apart to obtain $2(2n+1)$ shaded faces on each polyhedron. For each shaded face, there are two vertices corresponding to rectangles that tile the knot component cusp $C$; specifically, the two vertices meeting $A$ or the two vertices meeting $B$, depending on the face. Since each of these vertices is shared by exactly two shaded faces, we obtain $2(2n+1)$ rectangles from each polyhedron, or $4(2n+1)$ total such rectangles. $L_{2n+1}$ admits the same polyhedral decomposition as $J_{2n+1}$; the only difference is that the gluing along shaded faces might change the gluing of the polyhedron.

$\textbf{(2):}$ This holds if there are no half-twists under any of the crossing circles, as in $J_{2n+1}$; see \cite[Section 2]{FP}.  However, $L_{2n+1}$ has $2n$ half-twists in its diagram. A half-twist shifts the gluing of the rectangles making up the cusp.  Since $K$ is a knot, it must go through each crossing circle twice, and so, it will pass through an even number of half-twists. Thus, from Lemma $2.6$ in \cite{FP}, the fundamental domain for this torus is given by the meridian $2s$ and the longitude $2(2n+1)w + 2ks$, for some integer $k$.  By a change of basis, we can see that this cusp shape is once again rectangular. Note that, this fundamental domain is not a square since $\ell(s) = 1$ and $1 < \ell(w) < 2$. 

$\textbf{(3):}$ Consider the circle packing depicted in figure \ref{pretzelcuspshape}.  The rectangles tiling our cusp $C$ come from mapping $P_{i} \cap A$ to $\infty$ or mapping $P_{i} \cap B$ to $\infty$ for $i = 1, \ldots, 2n+1$. By the rotational symmetry of this circle packing, any $P_{i} \cap A$ and $P_{j} \cap A$ will determine isometric rectangles, and similarly for $P_{i} \cap B$ and $P_{j} \cap B$. In fact,  $P_{i} \cap A$ and $P_{j} \cap B$ will also determine isometric rectangles. The circle packings of these rectangles are exactly the same except the roles of $A$ and $B$ have been switched; see figure \ref{pretzelcuspshapes3}. We can also see that $P_{i} \cap A$ and $P_{j} \cap B$ determine isometric rectangles by considering the reflection through the circle running through the $P_{i} \cap P_{j}$. This reflection gives a symmetry exchanging $A$ and $B$. 
\end{proof}

\begin{figure}[h]
\includegraphics[scale=0.75]{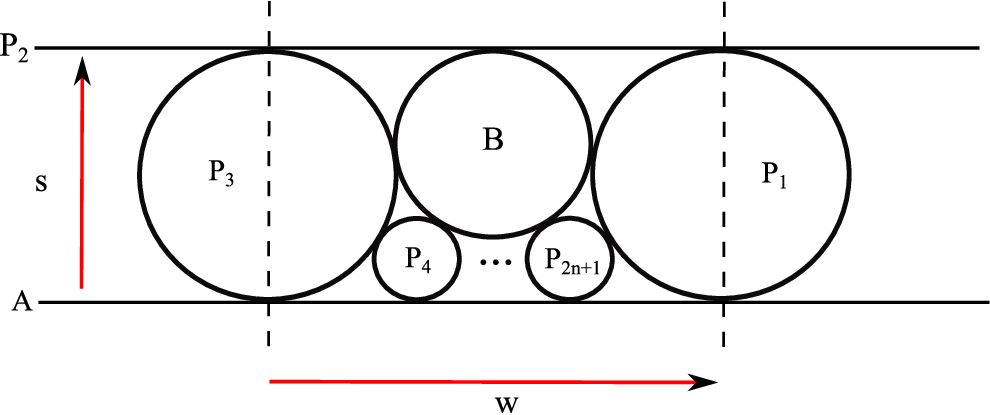}
\caption{The cusp shape of any one of the rectangles tiling our knot cusp $C$.} 
\label{pretzelcuspshapes3}
\end{figure}

Without loss of generality, we will assume any such rectangle coming from the tiling of our knot cusp looks like the one depicted in figure \ref{pretzelcuspshapes3}, i.e., we assume $P_{i} \cap A$ is mapped to $\infty$.

\begin{lemma}
\label{lemma:rectanglesize}
Let $R$ be any rectangle from the tiling of $C$. Let $P_{j}^{\ast}$ be the smallest such $P_{j}$ in the circle packing of this rectangle. Then for all $n \geq 2$, the circle packing of $R$ has the following size bounds:  
\begin{enumerate}
\item $\ell(s) =1$ and $ 1< \ell(w) < 2$, 
\item $\frac{n-2}{n-1} < D(B) < 1$, 
\item $D(B) > \frac{1}{2}$,
\item $D(P_{j}^{\ast}) < \frac{1}{n-1}$.
\end{enumerate}
\end{lemma}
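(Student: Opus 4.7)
The plan is to recognize the circle packing of the rectangle $R$ as a closed Steiner chain, from which all four quantities in the lemma can be computed explicitly. Apply the Möbius transformation sending $P_i \cap A$ to infinity. In the resulting upper half-plane picture, $P_i$ and $A$ are parallel vertical lines at Euclidean distance $1$ (after normalization), and the remaining white circles $B, P_j$ for $j \neq i$ preserve all original tangencies of the packing. In particular, $P_{i\pm 1}$ are tangent to $P_i$, $A$, and $B$, so each has Euclidean diameter $1$, while $B$ is tangent to $P_i$ but not to $A$, since $A$ and $B$ correspond to disjoint regions in the diagram of the pretzel knot.

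Apply a second Möbius transformation making $A$ and $B$ concentric circles $A', B'$. The $2n+1$ white circles $P_1, \ldots, P_{2n+1}$ now form a closed Steiner chain tangent to both $A'$ and $B'$ and to cyclic neighbors. Standard Steiner chain geometry yields $R_{B'}/R_{A'} = (1 - \sin(\pi/(2n+1)))/(1 + \sin(\pi/(2n+1)))$; translating back and rescaling so that the distance between the lines $P_i$ and $A$ is $1$ gives $D(B) = \cos^2(\pi/(2n+1))$. The canonical cusp expansion of Theorem \ref{thm:cuspexpansion} normalizes $\ell(s) = 1$, exactly as in the proof of Proposition \ref{prop:NL}. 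The four corners of $R$ on the horosphere at infinity lie along the polyhedral edges asymptotic to $\infty$; tracing these edges to the finite vertices $P_i \cap P_{i\pm 1}$ and $A \cap P_{i\pm 1}$ places the corners at Euclidean heights $\pm \sqrt{D(B)} = \pm \cos(\pi/(2n+1))$ (via the tangency of $P_{i\pm 1}$ with $B$), yielding $\ell(w) = 2\cos(\pi/(2n+1))$. Applying the standard Möbius inversion formula for the image of a circle to the chain circles in the concentric picture shows that the smallest chain circles are $P_{i\pm n}$ with diameter $D(P_j^*) = 2(1 - \cos(\pi/(2n+1)))$.

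All four claims then reduce to elementary trigonometric inequalities in $\pi/(2n+1)$. The upper bounds $D(B) < 1$ and $\ell(w) < 2$ follow from $\cos(\pi/(2n+1)) < 1$. The bound $D(B) > 1/2$ in claim (3) and the bound $\ell(w) > 1$ in claim (1) follow from $\cos(\pi/(2n+1)) > 1/\sqrt{2}$ and $\cos(\pi/(2n+1)) > 1/2$ respectively, both of which hold for $n \geq 2$ since $\pi/(2n+1) \leq \pi/5$. The lower bound $D(B) > (n-2)/(n-1)$ in claim (2) and the bound $D(P_j^*) < 1/(n-1)$ in claim (4) both reduce to the polynomial inequality $\pi^2(n-1) < (2n+1)^2$ for $n \geq 2$, via the elementary estimate $\sin x < x$ for $x > 0$. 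The main technical step is recognizing the Steiner chain structure in the concentric picture; once this is identified, Steiner chain geometry supplies the explicit formulas and the remaining bounds are routine trigonometric verifications.
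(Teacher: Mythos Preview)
Your proof is correct and takes a genuinely different route from the paper's. The paper never identifies the Steiner chain structure; instead it argues each inequality directly via packing constraints. For instance, the paper obtains $\ell(w)<2$ by a contradiction argument (if $\ell(w)\geq 2$ then $D(B)\geq 1$, which forces $B$ to overfill the strip), gets $D(P_j^{\ast})<\frac{1}{n-1}$ by translating the $w$-segment to pass through the centers of the $2n-2$ chain circles under $B$ and summing their contributions, deduces $D(B)>\frac{n-2}{n-1}$ from $D(B)+D(P_j)\geq 1$, and proves $D(B)>\frac{1}{2}$ for $n=2$ by an ad hoc right-triangle argument. None of the quantities is computed exactly.

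Your approach is cleaner and sharper: recognizing the $P_j$ as a closed Steiner chain between $A$ and $B$ yields the exact values $D(B)=\cos^{2}\theta$, $\ell(w)=2\cos\theta$, and $D(P_j^{\ast})=4\sin^{2}(\theta/2)=2(1-\cos\theta)$ with $\theta=\pi/(2n+1)$, after which all four bounds are elementary trigonometry. I checked your formulas independently (via inversive distance for $D(B)$, via the tangency equation $x_j^{2}=D(B)+D(P_j)-1$ for the chain circles) and they are correct; in particular your $D(P_j^{\ast})$ is right even though no chain circle sits at $x=0$, since the extremal ones sit at angles $\theta_j/2=\pi/2\pm\theta/2$ in the concentric picture. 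Two small presentational points: your phrase ``Euclidean heights $\pm\sqrt{D(B)}$'' should really be ``horizontal positions'' (the $w$-direction), and it would help to state explicitly why the rectangle corners project to $P_i\cap P_{i\pm1}$ and $A\cap P_{i\pm1}$---this is because the four polyhedral edges at the ideal vertex $P_i\cap A$ bound the two white faces $P_i,A$ and the two adjacent shaded triangles. What you gain over the paper is exact formulas and a uniform argument with no special case at $n=2$; what the paper's approach buys is that it needs no outside fact about Steiner chains.
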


\begin{proof}
As before, our choice of cusp neighborhood results in $\ell(s) = 1$.  Then $D(P_{1}) = D(P_{3}) = 1$. We will assume our rectangle is the one depicted in figure \ref{pretzelcuspshapes3}. By part $3$ of Proposition \ref{prop:Knotcusp}, all such rectangles tiling our cusp, along with their circle packings, are isometric to this one, up to relabelling.

First, we claim that for any $L_{2n+1}$, $1< \ell(w) < 2$. The lower bound follows from the fact that $D(P_{1}) = D(P_{3}) = 1$, and $P_{1}$ and $P_{3}$ can not be tangent to one another.  If $\ell(w) > 2$, then $D(B) > 1$ in order to be tangent to both $P_{1}$ and $P_{3}$.  However, since $\ell(s) =1 $, $B$ would not be tangent to $A$ and $P_{2}$. If $\ell(w) = 2$, then $D(B) =1$ in order to meet its tangency conditions.  Since $\ell(s) =1$, $B$ must separate our rectangle into two parts, one to the right of $B$ and one to the left of $B$.  This violates the tangency conditions of the $P_{j}$, for $j = 4 , \dots, 2n+1$.  So, $1< \ell(w) < 2$ and $D(B) < 1$.

Take the vector $w$ and translate it vertically so it intersects $P_{j}^{\ast}$ in its center.  This line will intersect all the $P_{j}$ in some segment $l(P_{j})$, which must be at least as large as $D(P_{j}^{\ast})$. This can easily be seen by translating $P_{j}^{\ast}$ horizontally along this line so that its point of tangency with $A$ is $P_{j} \cap A$.  Note that, there are exactly $2n-2$ circles $\left\{ P_{j} \right\}_{j=4}^{2n+1}$ packed under $B$. This gives the following inequality:
\begin{center}
$2 > \ell(w) > \displaystyle\sum\limits_{j=4}^{2n+1} l(P_{j}) \geq \displaystyle\sum\limits_{j = 4}^{2n+1} D(P_{j}^{\ast}) = (2n-2)D(P_{j}^{\ast})$.
\end{center}
This gives us that $D(P_{j}^{\ast}) < \frac{2}{2n-2} = \frac{1}{n-1}$.

Now, for any such $j$, $D(B) + D(P_{j}) \geq 1$. Combining with the previous result, we have that
\begin{center}
$D(B) \geq 1 - D(P_{j}^{\ast}) > 1 - \frac{1}{n-1} = \frac{n-2}{n-1}$,
\end{center}
as desired.

Finally, we need to show that $D(B) > \frac{1}{2}$. This is already true if $n>2$ since $\frac{n-2}{n-1} < D(B)$. So, assume $n=2$, which means there are exactly two circles, $P_{4}$ and $P_{5}$, packed under $B$. Suppose $D(B) \leq \frac{1}{2}$. Then $D(P_{4}) > \frac{1}{2}$ since $D(B) + D(P_{4}) > 1$.  Also, $\ell(w) \leq D(B) + \frac{D(P_{1})}{2} +  \frac{D(P_{3})}{2}  = \frac{3}{2}$.  Take the vector $w$ and translate it vertically so that it intersects $P_{4}$ in its center, and take the vector $s$ and translate it horizontally so that it intersects $P_{3}$ in its center. We shall still refer to the translates of these vectors as $w$ and $s$, respectively.  Now consider the right triangle with vertices at the center of $P_{3}$, $w \cap s$, and the left end point of $P_{4} \cap w$. The hypotenuse, $c$, of this triangle has length at least $\frac{1}{2}$ since $\frac{D(P_{3})}{2} = \frac{1}{2}$. The height, $a$, has length less than $\frac{1}{4}$ since  $\frac{D(P_{3})}{2} = \frac{1}{2}$ and $\frac{D(P_{4})}{2} \geq \frac{1}{4}$.  The base, $b$, has length less than $\frac{1}{4}$ since $\frac{\ell(w)}{2} \leq \frac{3}{4}$ and $D(P_{4}) > \frac{1}{2}$. This gives us that $\frac{1}{4} \leq \ell(c)^{2} = \ell(a)^{2} + \ell(b)^{2} \leq \frac{1}{16} + \frac{1}{16} = \frac{1}{8}$, which is a contradiction.  Thus, $D(B) > \frac{1}{2}$.

\end{proof}


\section{Commensurability classes of hyperbolic pretzel knot complements}
\label{sec:commensurablity}

Recall that two hyperbolic $3$-manifolds $M_{1} = \mathbb{H}^{3} / \Gamma_{1}$ and $M_{2} = \mathbb{H}^{3} / \Gamma_{2}$ are called \textit{commensurable} if they share a common finite-sheeted cover.  In terms of fundamental groups, this definition is equivalent to $\Gamma_{1}$ and a conjugate of $\Gamma_{2}$ in PSL$(2, \mathbb{C})$ sharing some finite index subgroup.  The \textit{commensurability class} of a hyperbolic $3$-manifold $M$ is the set of all $3$-manifolds commensurable with $M$.  

We are interested in the case when $M = \mathbb{S}^{3} \setminus K$, where $K$ is a hyperbolic knot.  It is conjectured in \cite{ReWa} that there are at most three knot complements in the commensurability class of a hyperbolic knot complement.  In particular, Reid and Walsh show that when $K$ is a hyperbolic $2$-bridge knot, then $M$ is the only knot complement in its commensurability class.  Their work provides  criteria for checking whether or not a hyperbolic knot complement is the only knot complement in its commensurability class.  Specifically, we have the following theorem coming from Reid and Walsh's work in \cite[Section 5]{ReWa}; this version of the theorem can be found at the beginning of \cite{MM}.

\begin{thm}
\label{thm:comm_knots}
Let $K$ be a hyperbolic knot in $\mathbb{S}^{3}$.  If $K$ admits no hidden symmetries, has no lens space surgery, and admits either no symmetries or else only a strong inversion and no other symmetries, then $\mathbb{S}^{3} \setminus K$ is the only knot complement in its commensurability class.
\end{thm}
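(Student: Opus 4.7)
The plan is to use the commensurator framework for Kleinian groups due to Margulis, together with the structure theory of knot complements. Write $M = \mathbb{S}^{3}\setminus K = \mathbb{H}^{3}/\Gamma$. Since $M$ is a hyperbolic knot complement other than the figure-eight (which can be checked separately), $M$ is non-arithmetic, and Margulis's theorem guarantees that the commensurator $\mathrm{Comm}(\Gamma)$ is itself a lattice containing $\Gamma$ as a finite-index subgroup. The corresponding orbifold $O_{\min} = \mathbb{H}^{3}/\mathrm{Comm}(\Gamma)$ is the \emph{minimal orbifold} of the commensurability class; every $3$-manifold commensurable with $M$ covers $O_{\min}$.

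The first step is to identify $O_{\min}$. The no-hidden-symmetries hypothesis is precisely the statement $\mathrm{Comm}(\Gamma)=N(\Gamma)$, where $N(\Gamma)$ denotes the normalizer in $\mathrm{PSL}(2,\mathbb{C})$, so $O_{\min} = M/\mathrm{Sym}(M)$. The symmetry hypothesis then leaves two cases: either $\mathrm{Sym}(M)$ is trivial and $O_{\min}=M$, or $\mathrm{Sym}(M)=\langle\iota\rangle$ is generated by a strong inversion. In the trivial-symmetry case, any commensurable knot complement $M'$ is a finite cover of $M$; but a cover of degree $>1$ of a one-cusped manifold has more than one cusp, contradicting that $M'$ is a knot complement. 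Hence $M'=M$, and Gordon--Luecke yields $K'=K$.

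The second step treats the strong-inversion case. The involution $\iota$ preserves the unique cusp of $M$ and acts on the cusp torus as a $180^{\circ}$ rotation, so the cusp of $O_{\min}$ has cross-section a $(2,2,2,2)$-pillowcase orbifold. Any knot complement $M'$ commensurable with $M$ covers $O_{\min}$, and its torus cusp unfolds the pillowcase via a degree-two orbifold cover at the cusp. A cusp-counting argument on the possible covers of $O_{\min}$ with a single torus cusp shows that the only such covers are $M$ itself or a manifold obtained from $M$ by passing through a Dehn surgery that would produce a lens space at an intermediate step. The no-lens-space-surgery hypothesis then excludes this alternative, forcing $M'=M$, and Gordon--Luecke again gives $K'=K$.

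The main obstacle is this last reduction: classifying which covers of $O_{\min}$ yield one-cusped manifolds that are knot complements in $\mathbb{S}^{3}$, and ruling out the exceptional ones. This requires the cyclic surgery theorem of Culler--Gordon--Luecke--Shalen, applied to the covering map $M'\to O_{\min}$ together with the combinatorics of cyclic orbifold covers of the pillowcase cusp, which is exactly the heart of Reid--Walsh's argument in \cite[Section 5]{ReWa}.
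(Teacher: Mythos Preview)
The paper does not prove this theorem; it is quoted from Reid--Walsh \cite[Section~5]{ReWa} in the formulation of Macasieb--Mattman \cite{MM}, and then applied as a black box. So there is no proof in the paper to compare against.

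Your outline follows the standard commensurator approach and sets things up correctly through the identification $O_{\min}=M/\mathrm{Sym}(M)$. However, there is a genuine error in the trivial-symmetry case. You assert that ``a cover of degree $>1$ of a one-cusped manifold has more than one cusp,'' and this is false: a cyclic cover of a knot complement can have a single torus cusp when the peripheral subgroup surjects onto the deck group. This is exactly what happens when $K$ admits a lens space surgery---the cyclic branched cover of $\mathbb{S}^{3}$ over $K$ restricts on the complement to a one-cusped cyclic cover of $\mathbb{S}^{3}\setminus K$. The theorem of Gonz\'alez-Acu\~na and Whitten says that this is the \emph{only} way one hyperbolic knot complement in $\mathbb{S}^{3}$ can nontrivially cover another. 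Thus the no-lens-space-surgery hypothesis is already required in the trivial-symmetry case, not just in the strong-inversion case; your argument there is not valid as written.

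Your final paragraph candidly acknowledges that the strong-inversion case is not carried out and defers to Reid--Walsh for the real work (the cyclic-surgery analysis of covers of $O_{\min}$). As an outline pointing at the right ingredients this is reasonable, but combined with the error above, the proposal does not stand as an independent proof.
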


Macasieb and Mattman use this criterion in \cite{MM} to show that for any hyperbolic pretzel knot of the form $K\left( \frac{1}{-2}, \frac{1}{3}, \frac{1}{n} \right)$, $n \in \mathbb{Z} \setminus \left\{7\right\}$, its knot complement $\mathbb{S}^{3} \setminus K\left( \frac{1}{-2}, \frac{1}{3}, \frac{1}{n} \right)$ is the only knot complement in its commensurability class.  The main challenge in their work was showing that these knots admit no \textit{hidden symmetries}. 

\begin{defn}
\label{defn:hiddensym}
Let $\Gamma$ be a finite co-volume Kleinian group. The \textit{normalizer} of $\Gamma$ is 

\begin{center}
$N(\Gamma) = \left\{ g \in \text{PSL}(2, \mathbb{C}) : g\Gamma g^{-1} = \Gamma \right\}$.  

\end{center}
The \textit{commensurator} of $\Gamma$ is 
\begin{center}
$C(\Gamma) = \left\{ g \in \text{PSL}(2, \mathbb{C}) : \left|\Gamma : \Gamma \cap g\Gamma g^{-1} \right| < \infty \: \text{and} \left|g \Gamma g^{-1} : \Gamma \cap g^{-1}\Gamma g \right| < \infty \right\}$.
\end{center}

If $N(\Gamma)$ is strictly smaller than $C(\Gamma)$, then $\Gamma$ and $\mathbb{H}^{3} / \Gamma$ are said to have \textit{hidden symmetries}. If $\mathbb{H}^{3} / \Gamma \cong \mathbb{S}^{3} \setminus K$, then we also say that $K$ admits hidden symmetries. 

\end{defn}

Here, we would also like to apply Reid and Walsh's criterion to show that our hyperbolic pretzel knot complements are the only knot complements in their respective commensurability classes.  The following proposition immediately takes care of symmetries and lens space surgeries.  Given a knot $K \subset \mathbb{S}^{3}$, $K$ admits a \textit{strong inversion} if there exists an involution $t$ of $(\mathbb{S}^{3}, K)$ such that the fixed point set of $t$ intersects the knot in exactly two points.

\begin{prop}
\label{prop:no_surg_or_sym}
Let $M  = \mathbb{S}^{3} \setminus K$, where $K = K\left( \frac{1}{q_{1}}, \frac{1}{q_{2}},\ldots, \frac{1}{q_{n}} \right)$ is a hyperbolic pretzel knot with all $q_{i}$ distinct, exactly one $q_{i}$ even, and $K \neq K\left( \frac{1}{-2}, \frac{1}{3}, \frac{1}{7} \right)$.  Then $M$ admits no lens space surgeries, and a strong inversion is its only symmetry. In particular, any $M_{2n+1}^{\sigma}$ admits no lens space surgeries, and a strong inversions is its only symmetry.  
\end{prop}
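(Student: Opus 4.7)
The plan is to handle the two assertions separately, reducing each to a classification result for Montesinos knots.

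For the symmetry statement, I would appeal to the classification of symmetry groups of Montesinos knots due to Boileau--Zimmermann (building on the Bonahon--Siebenmann work that underlies Theorem \ref{thm:Bo}). The key point is that for a hyperbolic Montesinos knot with $n\geq 3$ rational tangles, any self-homeomorphism of $(\mathbb{S}^3,K)$ is isotopic to one that preserves the canonical Conway-sphere system and permutes the constituent rational tangles; the induced permutation of the fraction vector $(1/q_1,\ldots,1/q_n)$ must lie in the dihedral group of cyclic permutations and order reversal given by Theorem \ref{thm:Bo}. Since all $q_i$ are distinct, no nontrivial cyclic permutation fixes this vector, so only the reversal is admissible. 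The reversal is realized geometrically by the standard strong inversion — the $\pi$-rotation about the horizontal axis bisecting every vertical twist region in our diagram. No further symmetry can arise from inside a single $1/q_i$-tangle, since each vertical tangle is rigid as a marked tangle. Any orientation-reversing symmetry would also have to fix the unique even $q_i$ and again reduces to a combinatorial dihedral element, so the parity hypothesis and distinctness together leave only the strong inversion. Hence the symmetry group is $\mathbb{Z}/2$.

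For the no-lens-space-surgery statement, I would split on the number of tangles. When $n\geq 4$, I would invoke the theorem of Wu that hyperbolic Montesinos knots of length at least four admit no exceptional Dehn surgeries; in particular no lens space surgery exists. When $n=3$, the question reduces to three-strand pretzel knots, and here the classification of lens space (cyclic) surgeries on hyperbolic pretzel knots due to Ichihara--Jong--Mattman singles out $K(-1/2,1/3,1/7)$ as the only hyperbolic pretzel $K(1/q_1,1/q_2,1/q_3)$ satisfying our distinctness-and-single-even hypothesis that admits a lens space surgery; this knot is excluded by assumption. For the main family $\{K_{2n+1}^{\sigma}\}$ of Section \ref{sec:RT_and_PK}, every member has $2n+1\geq 5$ tangles, placing us in the length-$\geq 4$ regime for all $n\geq 2$, and mutation along Conway spheres merely permutes the $q_i$ so the analysis applies uniformly to every $M_{2n+1}^{\sigma}$.

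The main obstacle I anticipate is matching references whose hypotheses exactly cover our setup: pretzel-surgery classifications in the literature are often stated for small signed parameters rather than the large positive distinct $q_i$ used here, and Montesinos symmetry results are often stated up to isotopy rather than for a specified diagram, so some brief translation of conclusions will be needed. Once the two classification inputs are pinned down, both assertions of the proposition follow immediately, and the final sentence about $M_{2n+1}^{\sigma}$ is a direct specialization.
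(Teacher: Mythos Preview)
Your lens-space argument is fine and essentially matches the paper's: the paper simply cites Ichihara--Jong and Lidman--Moore, who show that $K(-1/2,1/3,1/7)$ is the only hyperbolic pretzel knot admitting a lens space surgery, without splitting on the number of tangles. Your Wu-based shortcut for $n\geq 4$ is a reasonable alternative.

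Your symmetry argument, however, contains a concrete error. You claim that the strong inversion realizes the \emph{reversal} permutation of the tangle vector. It does not. The strong inversion is the $\pi$-rotation about the horizontal axis lying in the projection plane and passing through the middle of every twist region; this axis maps each twist region to \emph{itself} (flipping it top to bottom), so the induced permutation of the fraction vector is the \emph{identity}. Moreover, since all $q_i$ are distinct, the reversal does not fix $(1/q_1,\ldots,1/q_n)$ either, so no symmetry can induce it. Your framework --- reading off $\mathrm{Sym}(\mathbb{S}^3,K)$ from the dihedral elements of Theorem~\ref{thm:Bo} that fix the fraction vector --- would therefore yield the trivial group, not $\mathbb{Z}/2$. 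The strong inversion is precisely a symmetry that fixes each tangle setwise while acting nontrivially inside it; such symmetries are invisible to the permutation data, and this is exactly where your assertion ``no further symmetry can arise from inside a single $1/q_i$-tangle'' fails.

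The paper sidesteps this by citing Boileau--Zimmermann directly for the conclusion $\mathrm{Sym}(\mathbb{S}^3,K)=\mathbb{Z}_2$ (their classification already accounts for the within-tangle symmetries your dihedral bookkeeping misses), and then exhibits the nontrivial element explicitly as the horizontal-axis strong inversion, observing that its axis meets $K$ in exactly two points, both inside the unique twist region with an even number of crossings. You should do the same: invoke Boileau--Zimmermann as a black box rather than try to reconstruct it from the Bonahon classification.
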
  

\begin{proof}
All pretzel knots admitting lens space surgeries have been classified by Ichihara and Jong in \cite{IJ}, and this classification is also implied by the work of Lidman and Moore in \cite{LiMo}.  Both works show that the only hyperbolic pretzel knot that admits any lens spaces surgeries is $K\left( \frac{1}{-2}, \frac{1}{3}, \frac{1}{7} \right)$.  

To deal with symmetries, we first note that the work of Boileau and Zimmermann \cite{BoZi} implies that Sym$(\mathbb{S}^{3}, K) = \mathbb{Z}_{2}$. It is easy to see that the one non-trivial symmetry of any $K$ is a strong inversion.  Consider the knot diagram of $K_{2n+1}^{\sigma}$ as shown in figure \ref{figure6}.  Recall that exactly one twist region $R_{i}$ has an even number of crossings. Consider the involution of our knot in $\mathbb{S}^{3}$ whose axis cuts directly through the middle of all of our twist regions.  This involution will intersect $K_{2n+1}^{\sigma}$ in exactly two points, always inside the one twist region with an even number of crossings.  In the other twist regions, this axis will miss the knot, passing in between two strands at a crossing. This process for finding the strong involution generalizes to any pretzel knot $K$ with exactly one $q_{i}$ even.  
\end{proof}

It remains to rule out hidden symmetries.  In \cite{MM}, Macasieb and Mattman do this by arguing that the invariant trace field of any $K\left( \frac{1}{-2}, \frac{1}{3}, \frac{1}{n} \right)$ has neither $\mathbb{Q}(i)$ nor $\mathbb{Q}(\sqrt{-3})$ as a subfield.  This criterion for the existence of hidden symmetries is supplied by Neumann and Reid \cite{NeRe}.  Here, we use a geometric approach to show that our knots do not admit hidden symmetries.  We will also use a criterion for the existence of hidden symmetries provided by Neumann and Reid in \cite{NeRe}, stated below.

\begin{prop} \cite[Proposition 9.1]{NeRe}
\label{lemma:no_hidden_sym}
Let $\mathbb{H}^{3} / \Gamma$ be a hyperbolic knot complement which is not the figure-$8$ knot complement. Then $\mathbb{H}^{3} / \Gamma$ admits hidden symmetries if and only if $\mathbb{H}^{3}/ C(\Gamma)$ has a rigid Euclidean cusp cross-section.
\end{prop}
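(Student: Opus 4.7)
The plan is to analyze the cusp cross-section of the commensurator orbifold using the topological constraint that every symmetry of a knot $K$ must preserve the meridian of $K$ up to sign, whereas a hidden symmetry carries no such restriction. Write $N = N(\Gamma)$ and $C = C(\Gamma)$. Margulis's theorem, together with the fact that the figure-$8$ knot is the unique arithmetic hyperbolic knot complement, puts us in the non-arithmetic case, so $C$ is itself a lattice and $\mathbb{H}^3/C$ is a finite-volume orbifold. Since $\mathbb{H}^3/\Gamma$ has a single cusp and covers $\mathbb{H}^3/C$ with finitely many sheets, $\mathbb{H}^3/C$ also has a single cusp; its cross-section $E$ is a closed orientable Euclidean $2$-orbifold covered by the cusp torus $T = \mathbb{R}^2/P$ of $\mathbb{H}^3/\Gamma$. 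The classification of such orbifolds yields the list
\[
E \in \{T^2,\; S^2(2,2,2,2),\; S^2(2,4,4),\; S^2(2,3,6),\; S^2(3,3,3)\},
\]
with precisely the last three being rigid.

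For the $(\Leftarrow)$ direction, I would assume $E$ is rigid, so its Euclidean orbifold group contains a rotation of order $3$, $4$, or $6$, realized on $T$ by some $g \in C$ that sends the meridian class $m \in H_1(T;\mathbb{Z})$ to a primitive class independent of $\pm m$. Any element of $N/\Gamma$, however, descends to a self-homeomorphism of the pair $(\mathbb{S}^3, K)$ and must preserve $m$ up to sign, since $m$ is distinguished as a generator of $H_1(\mathbb{S}^3 \setminus K) \cong \mathbb{Z}$. Hence $g \in C \setminus N$, yielding a hidden symmetry.

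For the $(\Rightarrow)$ direction, I would prove the contrapositive: if $E$ is non-rigid, then $C = N$. When $E \in \{T^2, S^2(2,2,2,2)\}$, every cusp-stabilizing element of $C$ acts on $T$ by a translation composed with $\pm 1$, and in particular preserves the meridian slope. Given any $g \in C$, after premultiplying by an element of $\Gamma$ to arrange that $g$ stabilizes the cusp (using that $\mathbb{H}^3/C$ has only one cusp), Dehn-filling along the meridian caps off $\mathbb{H}^3/\Gamma$ to $\mathbb{S}^3$ and $\mathbb{H}^3/C$ to some closed orbifold $\mathcal{O}$; the covering $\mathbb{H}^3/\Gamma \to \mathbb{H}^3/C$ extends across this filling to an orbifold cover $\mathbb{S}^3 \to \mathcal{O}$, realizing $g$ as a symmetry of the pair $(\mathbb{S}^3, K)$. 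By Mostow rigidity together with the Gordon--Luecke theorem identifying $K$ from its complement, this symmetry is isotopic to an isometry of $\mathbb{H}^3/\Gamma$, placing $g \in N$.

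The hardest part will be making the Dehn-filling extension in the $(\Rightarrow)$ direction rigorous --- promoting a commensurator element with prescribed cusp action to an honest global symmetry of the pair $(\mathbb{S}^3, K)$. This step requires non-rigidity of $E$ precisely to guarantee that the meridian slope is $C$-invariant, and it requires the exclusion of the figure-$8$ knot because its arithmeticity forces $C$ to be dense in $\mathrm{PSL}(2,\mathbb{C})$, destroying the discrete lattice setup on which the entire argument rests.
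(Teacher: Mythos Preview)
The paper does not prove this proposition; it is quoted from Neumann--Reid \cite[Proposition 9.1]{NeRe} and used as a black box. So there is no proof in the paper to compare yours against.

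That said, your outline is the standard one and is essentially how the result is proved. The $(\Leftarrow)$ direction is fine. In the $(\Rightarrow)$ direction you have the right idea, but the sentence ``realizing $g$ as a symmetry of the pair $(\mathbb{S}^3, K)$'' hides the key step. A priori the cover $\mathbb{H}^{3}/\Gamma \to \mathbb{H}^{3}/C$ need not be regular, so an arbitrary $g \in C$ need not act as a deck transformation. What makes the argument work is that after you Dehn-fill along the (well-defined, since $E$ is non-rigid) meridian slope, the extended cover $\mathbb{S}^{3} \to \mathcal{O}$ has simply connected total space; hence $\mathbb{S}^{3}$ is the universal orbifold cover of $\mathcal{O}$, the cover is automatically regular, and its deck group has order exactly $[C:\Gamma]$. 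Restricting these deck transformations to the complement of the filling core shows $\Gamma \triangleleft C$, i.e.\ $C \subset N(\Gamma)$, so $C = N$. You should make this explicit rather than appealing to Gordon--Luecke, which is not what is doing the work here.
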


The orientable rigid Euclidean orbifolds are $\mathbb{S}^{2}(2,4,4)$, $\mathbb{S}^{2}(3,3,3)$, and $\mathbb{S}^{2}(2,3,6)$, and are named so because their moduli spaces are trivial. The following proposition will imply that our hyperbolic pretzel knot complements do not admit hidden symmetries, and so, they are the only knot complements in their respective commensurability classes. In what follows, $\mathbb{H}^{3} = \left\{(x,y,z) | z>0\right\}$.

\begin{prop}
\label{prop:no_rigid}
For all  $n \geq 2$ and $q_{i}$ sufficiently large, the hyperbolic knot complement $M = \mathbb{S}^{3} \setminus K = N_{2n+1}\left( (1, q_{1}) , \dots, (1, q_{2n+1}) \right)$ admits no hidden symmetries.
\end{prop}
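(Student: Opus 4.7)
The plan is to apply the Neumann--Reid criterion (Proposition \ref{lemma:no_hidden_sym}): since $M$ is not the figure-$8$ knot complement, $M$ admits hidden symmetries if and only if $\mathbb{H}^3/C(\Gamma)$ has a rigid Euclidean cusp cross-section. The rigid orientation-preserving Euclidean $2$-orbifolds are $\mathbb{S}^2(2,4,4)$, $\mathbb{S}^2(3,3,3)$, and $\mathbb{S}^2(2,3,6)$. A flat torus covers a rigid orbifold if and only if its translation lattice is (a sublattice of) the square lattice $\mathbb{Z}[i]$ or the hexagonal lattice $\mathbb{Z}[\omega]$ up to scale; equivalently, its Teichm\"uller parameter lies in $\mathbb{Q}(i) \cup \mathbb{Q}(\omega)$. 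So the goal is to show that the cusp of $M$ has modulus outside $\mathbb{Q}(i) \cup \mathbb{Q}(\omega)$.

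First I would pin down the cusp shape of the knot component of $N_{2n+1}$ using Proposition \ref{prop:Knotcusp} together with the geometry of the rectangles described in Lemma \ref{lemma:rectanglesize}. In $(s,w)$-coordinates with $\ell(s)=1$, the meridian is $\mu = 2s$ and the longitude is $\lambda = 2(2n+1)w + 2ks$ for some integer $k$. After an $SL(2,\mathbb{Z})$ change of basis subtracting $k\mu$ from $\lambda$, the lattice becomes $2\mathbb{Z} \oplus 2i(2n+1)\ell(w)\mathbb{Z}$ and the Teichm\"uller parameter is
\[
\tau_\infty \;=\; i(2n+1)\ell(w),
\]
where $1 < \ell(w) < 2$ and in particular $(2n+1)\ell(w) > 5$, so $\tau_\infty$ sits high in the standard fundamental domain, well away from $i$ and from $e^{2\pi i/3}$.

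Next I would invoke Thurston's hyperbolic Dehn surgery theorem (as refined by Neumann--Zagier and by Hodgson--Kerckhoff): as $q_i \to \infty$ the filled manifold $M$ converges geometrically to $N_{2n+1}$, and the cusp Teichm\"uller parameter $\tau(q_1,\dots,q_{2n+1})$ depends real-analytically on the Dehn surgery parameters $(1/q_1,\dots,1/q_{2n+1})$ and limits to $\tau_\infty$. The central algebraic claim is that $\tau_\infty \notin \mathbb{Q}(i) \cup \mathbb{Q}(\omega)$, equivalently $\ell(w) \notin \mathbb{Q} \cup \mathbb{Q}\sqrt{3}$. The value $\ell(w)$ is determined by the tangency system for the circles $P_1,\dots,P_{2n+1},A,B$ shown in Figure \ref{pretzelcuspshapes3}; these tangency conditions (with $\ell(s)=1$ fixed) yield a polynomial relation which one solves and then verifies that the resulting algebraic number lies outside $\mathbb{Q}\cup\mathbb{Q}\sqrt{3}$.

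Finally, to propagate the result from $\tau_\infty$ to all sufficiently large $q_i$, I would exploit the universal lower bound on the cusp area of a rigid Euclidean $2$-orbifold together with the fact that the cusp area of $M$ is uniformly bounded (close to that of the knot cusp of $N_{2n+1}$). If $M$ covered a rigid orbifold $O$, then the covering degree $\text{Area}(M\text{-cusp})/\text{Area}(O\text{-cusp})$ would be a bounded positive integer, so only finitely many putative rigid moduli $\tau_0 \in \mathbb{Q}(i)\cup\mathbb{Q}(\omega)$ could lie within a fixed neighborhood of $\tau_\infty$. The analytic map $(1/q_1,\dots,1/q_{2n+1}) \mapsto \tau$ can hit each such $\tau_0$ only on a proper analytic subvariety; taking all $q_i$ large enough avoids every one of these finitely many bad loci and keeps $\tau$ in a neighborhood of $\tau_\infty$ disjoint from rigid moduli. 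The main obstacle is the algebraic step: extracting from the circle-packing polynomial a proof that $\ell(w) \notin \mathbb{Q}\cup\mathbb{Q}\sqrt{3}$, the density of $\mathbb{Q}(i)\cup\mathbb{Q}(\omega)$ in moduli space having already been neutralized by the cusp-area bound.
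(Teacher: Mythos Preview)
Your approach is genuinely different from the paper's, and the gap you identify at the end is real and not easily closed.

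You reduce the problem to a statement about the Teichm\"uller parameter of the cusp torus: you want to show that the modulus $\tau_\infty = i(2n+1)\ell(w)$ lies outside $\mathbb{Q}(i)\cup\mathbb{Q}(\omega)$, equivalently $\ell(w)\notin\mathbb{Q}\cup\mathbb{Q}\sqrt{3}$. This is a correct sufficient criterion, and your finiteness-of-moduli argument for propagating to large $q_i$ is sound (the degree bound is better justified by $\mathrm{vol}(M)\le (4n+2)v_{\mathrm{oct}}$ together with the universal lower bound on orbifold volume than by cusp area, but that is a minor repair; once the set $S_n$ of admissible moduli is finite and $\tau_\infty\notin S_n$, continuity finishes the job and the analytic-subvariety detour is unnecessary). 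The problem is that you never establish the algebraic input. The quantity $\ell(w)$ is determined by a system of tangency equations for $2n+1$ circles, and extracting from it that $\ell(w)\notin\mathbb{Q}\cup\mathbb{Q}\sqrt{3}$ for every $n\ge 2$ is a genuine computation you have not done; nothing in Lemma~\ref{lemma:rectanglesize} gives this.

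The paper sidesteps this entirely by working with a strictly weaker (hence easier to verify) obstruction: rather than the cusp modulus, it examines the pattern of \emph{maximal horoballs} in the packing associated to the knot cusp $C$ of $N_{2n+1}$. Any element of $C(\Gamma)$ fixing the cusp at $\infty$ must preserve this pattern, so if $\mathbb{H}^3/C(\Gamma)$ had a rigid cusp there would be an order-$3$ or order-$4$ rotation of the pattern. The paper shows the maximal $C$-horoballs form a rectangular lattice with side lengths $\ell(w)$ and $2\ell(s)=2$; since $1<\ell(w)<2$ (Lemma~\ref{lemma:rectanglesize}), the unique shortest vectors are $\pm w$, forcing any rotational symmetry to have order at most $2$. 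This uses only the inequality $1<\ell(w)<2$, not any irrationality of $\ell(w)$, and then geometric convergence transfers the conclusion to $M$ for large $q_i$. Note that your criterion is logically stronger than the paper's: horoball-packing symmetry of order $3$ or $4$ already forces the modulus into $\mathbb{Q}(i)\cup\mathbb{Q}(\omega)$, so the paper is ruling out a consequence of what you are trying to rule out. That is exactly why the paper's route avoids the algebraic obstacle you flagged.
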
 

\begin{proof}
We will show that any such hyperbolic knot complement does not cover a $3$-orbifold that admits a rigid cusp $2$-orbifold, and so, by Proposition \ref{lemma:no_hidden_sym}, these knot complements admit no hidden symmetries.  First, we shall analyze the cusp of $N_{2n+1}$ corresponding to the knot component of $L_{2n+1}$, and then expand this analysis to the cusp shape of any such $M$. In particular, we will prove that this cusp of $N_{2n+1}$ does not cover any rigid $2$-orbifold. This is accomplished by showing that the horoball packing corresponding to this cusp does not admit an order three or order four rotational symmetry. Then, by taking sufficiently long Dehn surgeries along all of the crossing circles of $L_{2n+1}$, we can make sure that the cusp of $M$ also does not cover any rigid $2$-orbifold.

Throughout this proof, let $C$ denote the cusp of $N_{2n+1}$ that corresponds to the knot component of $L_{2n+1}$.  Lift to $\mathbb{H}^{3}$ so that one of the lifts of the cusp $C$ is a horoball centered at $\infty$, denoted $H_{\infty}$.  There will be a collection of disjoint horoballs in $\mathbb{H}^{3}$ associated with each cusp in $N_{2n+1}$.  We expand our horoballs according to the procedure given by Theorem \ref{thm:cuspexpansion}.  Specifically, we pick an order for our cusps, and expand the horoball neighborhood of a cusp until it either meets another horoball or meets the \textit{midpoint} of some edge of one of the polyhedra; see \cite[Definition $3.6$]{FP} .  This procedure allows us to expand $H_{\infty}$ to height $z=1$, since any other horoballs will have diameter at most $1$ under these expansion instructions; see \cite[Theorem $3.8$]{FP}.  We shall refer to a horoball of diameter $1$ as a \textit{maximal horoball}.  This procedure from \cite[Theorem $3.8$]{FP} results in maximal horoballs sitting at each vertex of a rectangle tiling our cusp cross-section $C$.

\begin{figure}[h]
\includegraphics[scale=0.60]{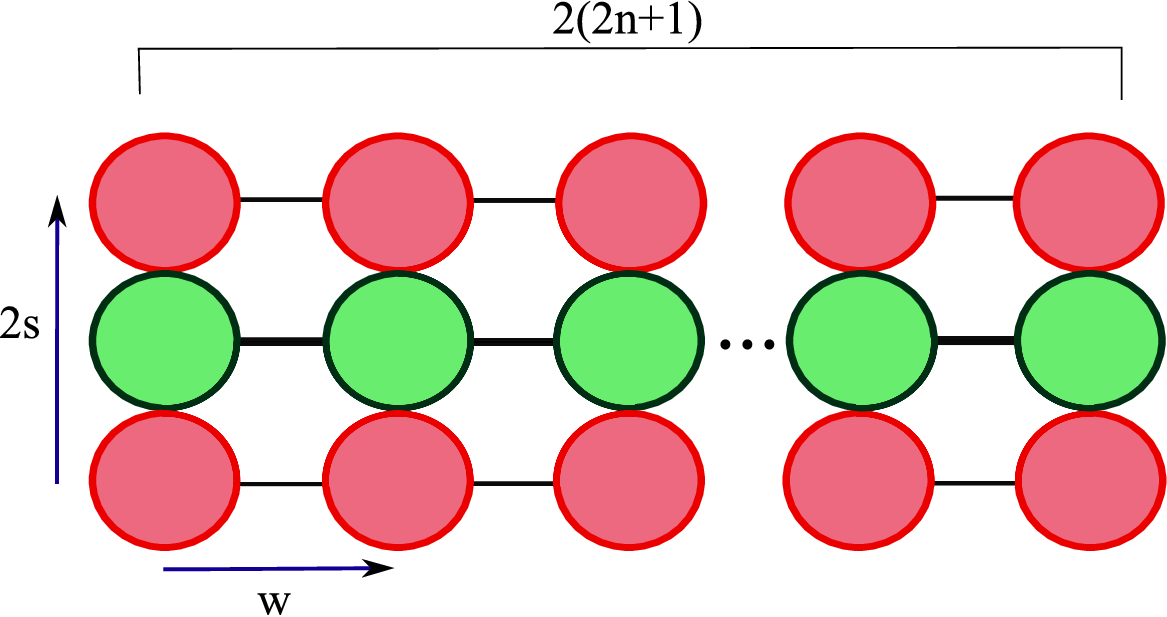}
\caption{The cusp tiling of a cross-section of $C$. The red circles denote the shadows of maximal horoballs from $C$, and the green circles denote the shadows of maximal horoballs from crossing circles.}
\label{cusptiling}
\end{figure}

By Proposition \ref{prop:Knotcusp} and Lemma \ref{lemma:rectanglesize}, the cusp cross-section of $C$ is tiled by a collection of rectangles in a very particular fashion. All of these rectangles have the same dimensions: $\ell(s)$ by $\ell(w)$, with $\ell(s) =1$ and $1 < \ell(w) < 2$.  Furthermore, the circle packing for each of these rectangles is exactly the same. These $4(2n+1)$ rectangles are glued together to form a $2 \times 2(2n+1)$ block of rectangles. Expand this tiling of the cusp cross-section to cover the entire plane.  From our view at $\infty$, we will see the shadow of a maximal horoball centered at each vertex. Specifically, each of the $2n+1$ crossing disks gives three vertices, two of which correspond to horoballs coming from our cusp $C$.  In terms of our $2 \times 2(2n+1)$ block of rectangles, the vertices along the middle row correspond with maximal horoballs of our crossing circles.  Vertices along the top and bottom rows of the block correspond with maximal horoballs from $C$.  We claim that they are in fact the only maximal horoballs of $C$.  See figure \ref{cusptiling} for a diagram showing the maximal horoballs of $C$.

\begin{figure}[h]
\includegraphics[scale=0.60]{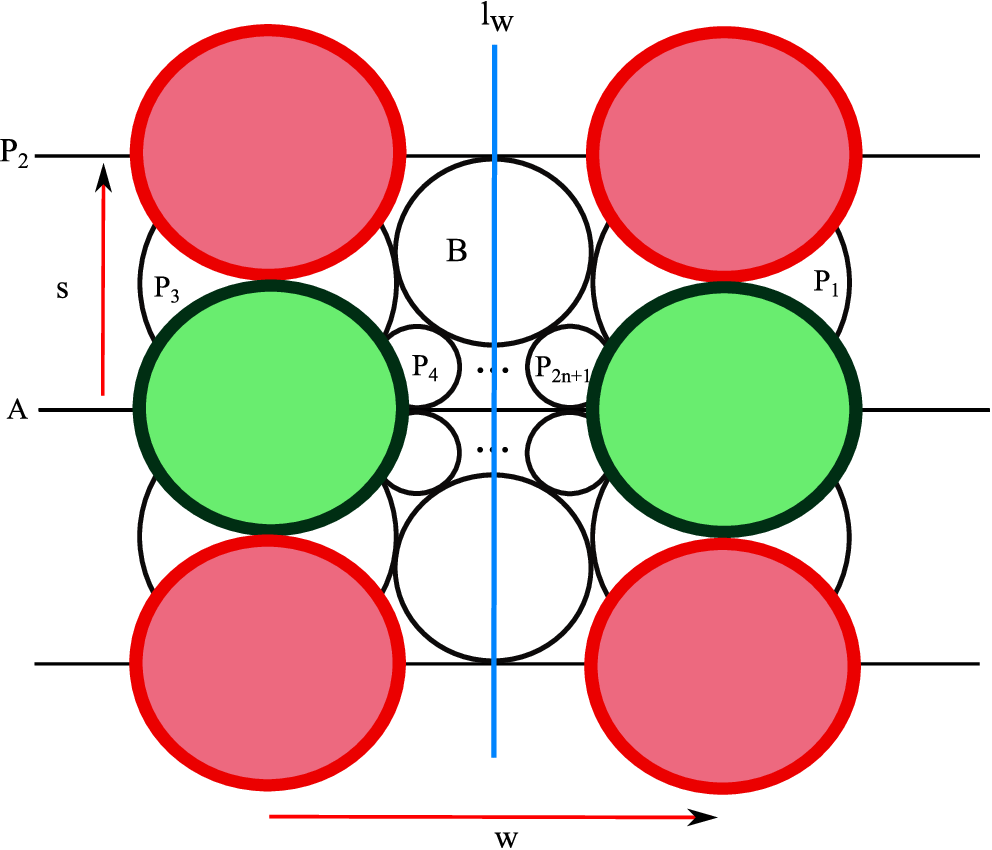}
\caption{The local picture of our cusp tiling of a cross section of $C$. The red circles denote the  shadows of maximal horoballs from $C$, and the green circles denote the shadows of maximal horoballs from crossing circles.}
\label{pretzelcuspshapes4}
\end{figure}

Our circle packing analysis of the rectangles tiling $C$ from Lemma \ref{lemma:rectanglesize} will help us prove this claim. Figure \ref{pretzelcuspshapes4} shows two adjacent rectangles coming from the tiling of $C$, along with their circle packings. This figure also includes the shadows of the maximal horoballs located at vertices. See figure \ref{pretzelcuspshapes3} for a picture of one of these rectangles without the horoball shadows. Suppose there exists another maximal horoball of $C$, call it $H$.  We know $H$ can not intersect the other maximal horoballs, except possibly in points of tangencies. Also, $H$ must be centered at a point either outside of the circles or on the boundary of one of the circles from our circle packing since in constructing our link complement, we cut away hemispheres bound by these circles. On our cusp cross-section of $C$, there are two lines of symmetry that will be useful here:  the line $A$ and the line $l_{w}$, which cuts through the vector $w$ in its midpoints.  Our horoball packing admits reflective symmetries about both of these lines. We shall now consider two cases.

\textbf{Case $1$:} $H$ is centered along $l_{w}$.  Since the center of $H$ can not be contained in $B$, $H$ is either centered at $x_{0} = P_{2} \cap B$ or some $y$ that lies below $B$ and above $A$ on $l_{w}$.  First, suppose $H$ is centered at $x_{0}$. Since $\ell(w) <2$ and there are maximal horoballs at the corners of any such rectangle, $H$ can not be maximal. Now, suppose $H$ is centered at some $y$ as described above.  By applying the reflection along $A$, $H$ will get mapped to another maximal horoball.  For $n \geq 2$, we know that $D(B) > \frac{1}{2}$ by Lemma \ref{lemma:rectanglesize}. Thus, for $n \geq 2$, the distance from the center of $H$ to $l_{w} \cap A$ is less than $\frac{1}{2}$.  In this case, $H$ will overlap with its image.  In order to meet our tangency conditions, $H$ must map to itself.  This implies that $H$ is centered at $y_{0} = l_{w} \cap A$. Once again, since $\ell(w) <2$ and there are maximal horoballs at the corners of any such rectangle, $H$ can not be maximal.

\textbf{Case $2$:} Assume $H$ is not centered along $l_{w}$.  Then the reflection along $l_{w}$ maps $H$ to some other maximal horoball, $H'$.  Now, if $H'$and $H$ intersect, it must be at a point of tangency. So, both $H$ and $H'$ each must be centered a distance at least $\frac{1}{2}$ from $l_{w}$.  This implies that the center of $H$ is at most a distance $\frac{1}{2}$ from the $s$ side of the rectangle closest to $H$. Also, since $\ell(s)=1$, the center of $H$ will be at most a distance $\frac{1}{2}$ from a $w$ side of a rectangle.  Therefore, the center of $H$ will be at most a distance of $\sqrt{(\frac{1}{2})^{2} + (\frac{1}{2})^{2}} = \frac{1}{\sqrt{2}} < 1$ away from a corner of a rectangle, which is also a center of a maximal horoball.  This implies that $H$ will overlap with a maximal horoball at one of the corners, which can not happen. Thus, the only maximal horoballs of $C$ occur at the corners of our rectangles as specified above.

We now claim that the horoball packing corresponding to the cusp $C$ of $N_{2n+1}$ does not admit an order three or order four rotational symmetry.  We fail to have such symmetries because of the shape of our rectangles.  Pick any maximal horoball $H$ of $C$ such that $H \neq H_{\infty}$.  The distance from the center of $H$ to the center of any other maximal horoball of $C$ in the $s$ direction is an integer multiple of $2  \ell(s) = 2$, and the distance from the center of $H$ to the center of any other maximal horoball of $C$ in the $w$ direction is an integer multiple of $\ell(w)$, where $\ell(w) < 2$.   Next, note that the distance across the diagonal of the $2s \times w$ rectangle from the center of $H$ to the center of another maximal horoball of $C$ is $\sqrt{(2\ell(s))^{2}+ \ell(w)^{2}} = \sqrt{4 + \ell(w)^{2}} > \sqrt{5}> 2$ since $\ell(w) > 1$. This implies that the two closest maximal horoballs of $C$ are a distance $\ell(w)$ in the $w$ direction (one to the left of $H$ and one to the right of $H$). This gives an infinite string of pairwise closest maximal horoballs all centered on the same line: take any $H \neq H_{\infty}$ and each translate of $H$ by $n \cdot \ell(w)$, $n \in \mathbb{Z}$ determines another horoball in this string; see figure \ref{cusptiling}. Any rotational symmetry would have to map a string of pairwise closest maximal horoballs to another string of pairwise closest maximal horoballs.  Thus, the only possible rotational symmetry would be an order two symmetry, where each such string maps back to itself. So, the horoball packing of $C$ does not admit an order three or order four rotational symmetry. Thus, this cusp does not cover a $2$-orbifold that has an order three or order four cone point.  But any rigid cusp $2$-orbifold has an order three or order four cone point.  Therefore, $C$ does not cover any rigid cusp $2$-orbifold.

Since the cusp cross-section of $N_{2n+1}$ corresponding to the knot component of $L_{2n+1}$ does not admit order three or order four rotational symmetries, we can now show that the cusp cross-section of $M$ also doesn't have these symmetries.  This is made possible by taking sufficiently long Dehn fillings along the crossing circles. As $q_{i} \rightarrow \infty$, any such $M$ converges to $N_{2n+1}$ in both the geometric topology and the algebraic topology. This convergence implies that we can fix a compact subset of $\mathbb{H}^{3}$, and the geometry of our horoball packing of $C'$ (the cusp of $M$ corresponding to the knot $K$) can be made sufficiently close to the geometry of $C$ on this compact subset, by choosing $q_{i}$ sufficiently large. So, consider the set of maximal horoballs $H_{1}, \dots, H_{k}$ of $C$ that intersect some fixed fundamental domain for the stabilizer of $\infty$. We claim that for sufficiently small $\delta$, we can choose $q_{i}$ large enough so that each such $H_{j}$ has radius and center $\delta$-close to a corresponding horoball $H_{j}'$ in $C'$.

Let $g_{j}$ be a deck transformation of $N_{2n+1}$ with $H_{j} = g_{j}(H_{\infty})$ (for $1 \leq j \leq k$). Then for each $M = \mathbb{S}^{3} \setminus K = N_{2n+1}\left( (1, q_{1}) , \dots, (1, q_{2n+1}) \right)$, we obtain a covering transformation $g_{j_{i}}$  with $g_{j_{i}} \rightarrow g_{j}$ as $i \rightarrow \infty$ in the algebraic topology (here, $i \rightarrow \infty$ means that all $2n+1$ Dehn surgery coefficients are heading to infinity). This convergence implies that the centers and radii of $H_{j}' = g_{j_{i}}(H_{\infty})$ approach the center and radii of $H_{j}$, respectively. This gives us the desired set of horoballs $H_{1}' ,\dots, H'_{k}$ in $C'$, which we refer to as \textit{almost maximal horoballs}.   

Now we can show that $C'$ lacks any order three or order four rotational symmetries by using the same type of argument we used for $C$. For $C$, we had infinite strings of pairwise closest maximal horoballs, with each string centered on a horizontal line. For $C'$, we get finite strings (since we are working over a compact domain) of pairwise closest almost maximal horoballs. These horoballs might not be centered on horizontal lines, but instead, are within a sufficiently small $\epsilon$ of being centered on horizontal lines. If anything, this will only further break any possible symmetries. Any rotational symmetry would have to map a string of pairwise closest almost maximal horoballs to another string of pairwise closest almost maximal horoballs. Again, the only possible rotational symmetry would be an order two symmetry. Thus, the one cusp of $M$ cannot cover a rigid $2$-orbifold, and so, $M$ does not admit hidden symmetries. 
\end{proof}

Combining Proposition \ref{prop:no_rigid} with Proposition \ref{prop:no_surg_or_sym} shows that we have covered the three criteria in Reid and Walsh's theorem. This gives the following theorem, which applies to our pretzel knots $K_{2n+1} = K \left( \frac{1}{q_{1}}, \frac{1}{q_{2}}, \ldots, \frac{1}{q_{2n+1}} \right)$, if we assume that all $q_{i}$ are sufficiently large.

\begin{thm}
\label{thm:non_commensurable}
Let $n \geq 2$ and let $q_{1}, \ldots, q_{2n+1}$ be integers such that only $q_{1}$ is even, $q_{i} \neq q_{j}$ for $i \neq j$, and all $q_{i}$ are sufficiently large. Then the complement of the hyperbolic pretzel knot  $K \left( \frac{1}{q_{1}}, \frac{1}{q_{2}}, \ldots, \frac{1}{q_{2n+1}} \right)$  is the only knot complement in its commensurability class.  In particular, any two of these hyperbolic pretzel knot complements are incommensurable.
\end{thm}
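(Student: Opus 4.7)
The plan is to apply the Reid--Walsh criterion (Theorem \ref{thm:comm_knots}), which reduces showing that $M = \mathbb{S}^{3} \setminus K_{2n+1}$ is the only knot complement in its commensurability class to verifying three conditions: (i) $K_{2n+1}$ admits no hidden symmetries, (ii) $M$ admits no lens space surgery, and (iii) the only symmetry of $K_{2n+1}$ is a strong inversion. Conditions (ii) and (iii) are handled immediately by Proposition \ref{prop:no_surg_or_sym}: since the $q_i$ are distinct with exactly $q_1$ even, and for $q_i$ sufficiently large we avoid the exceptional pretzel $K(\tfrac{1}{-2},\tfrac{1}{3},\tfrac{1}{7})$, both (ii) and (iii) follow directly.

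The main obstacle is (i). For this I would invoke the Neumann--Reid criterion (Proposition \ref{lemma:no_hidden_sym}): hidden symmetries are equivalent to the commensurator quotient having a rigid Euclidean cusp, i.e.\ one of $\mathbb{S}^{2}(2,4,4)$, $\mathbb{S}^{2}(3,3,3)$, $\mathbb{S}^{2}(2,3,6)$. Equivalently, it suffices to show that the knot cusp cross-section of $M$ does not cover a rigid $2$-orbifold, and for this it is enough to show that the horoball packing at the knot cusp of $M$ admits no rotational symmetry of order $3$ or $4$. This is exactly Proposition \ref{prop:no_rigid}, whose strategy I would follow: first analyze the knot-component cusp $C$ of the parent untwisted augmented link complement $N_{2n+1}$ using the rectangle tiling and circle packing from Proposition \ref{prop:Knotcusp} and Lemma \ref{lemma:rectanglesize}. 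The maximal horoballs of $C$ line up in infinite horizontal strings with consecutive horizontal spacing $\ell(w) < 2$, while the shortest diagonal hop between strings has length $\sqrt{4 + \ell(w)^{2}} > 2$, so any rotational symmetry of the packing must preserve individual strings and thus have order at most $2$. By Thurston's hyperbolic Dehn surgery theorem, $M = N_{2n+1}\bigl((1,q_1), \ldots, (1, q_{2n+1})\bigr)$ converges geometrically to $N_{2n+1}$ as the $q_i \to \infty$; on any fixed compact domain the maximal horoballs of $C$ perturb to ``almost maximal'' horoballs of the knot cusp $C'$ of $M$, so the same string structure, up to arbitrarily small error, rules out order $3$ or $4$ rotational symmetry of $C'$ once the $q_i$ are sufficiently large.

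With (i), (ii), (iii) all established, Theorem \ref{thm:comm_knots} gives that each such pretzel knot complement is the only knot complement in its commensurability class. The final ``in particular'' clause is then an immediate formal consequence: if two distinct knots $K_{2n+1}$ and $K_{2n+1}'$ from this family had commensurable complements, the common commensurability class would contain two distinct knot complements, contradicting the uniqueness we just established. The only subtlety worth emphasizing is the quantitative dependence on ``sufficiently large'' $q_i$, which enters in two places---avoiding the exceptional $(-2,3,7)$ pretzel in Proposition \ref{prop:no_surg_or_sym}, and making the geometric-convergence argument in Proposition \ref{prop:no_rigid} strong enough that the asymmetry observed in the exact packing of $C$ survives in the perturbed packing of $C'$.
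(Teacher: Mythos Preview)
Your proposal is correct and follows essentially the same route as the paper: invoke the Reid--Walsh criterion (Theorem \ref{thm:comm_knots}), dispatch the lens-space and symmetry conditions via Proposition \ref{prop:no_surg_or_sym}, and rule out hidden symmetries via Proposition \ref{prop:no_rigid}. The paper's own proof is in fact just the one-sentence observation that these two propositions cover the three hypotheses of Theorem \ref{thm:comm_knots}; your sketch of the horoball-packing argument behind Proposition \ref{prop:no_rigid} is accurate but more detail than the theorem itself requires, since that proposition is already available.
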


The work of Schwartz \cite[Theorem 1.1]{Sch} tells us that two cusped hyperbolic $3$-manifolds are commensurable if and only if their fundamental groups are quasi-isometric.  This immediately gives the following corollary.

\begin{cor}
\label{cor:notQI}
If two pretzel knot complements as described in Theorem \ref{thm:non_commensurable} are non-isometric, then they do not have quasi-isometric fundamental groups.  
\end{cor}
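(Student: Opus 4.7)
The plan is to obtain the corollary by directly chaining two results already in hand: Theorem \ref{thm:non_commensurable} and Schwartz's rigidity theorem \cite[Theorem 1.1]{Sch}. First, I would fix two non-isometric pretzel knot complements $M_1 = \mathbb{S}^3 \setminus K_1$ and $M_2 = \mathbb{S}^3 \setminus K_2$ drawn from the family in Theorem \ref{thm:non_commensurable}. Since $M_1$ and $M_2$ are distinct members of that family and each is the unique knot complement in its commensurability class, Theorem \ref{thm:non_commensurable} immediately gives that $M_1$ and $M_2$ are incommensurable.

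Next, I would apply the contrapositive of Schwartz's theorem, which asserts that for two cusped hyperbolic $3$-manifolds $N_1$ and $N_2$, commensurability is equivalent to quasi-isometry of $\pi_1(N_1)$ and $\pi_1(N_2)$. Since each $M_i$ is a hyperbolic knot complement in $\mathbb{S}^3$ (hence cusped), Schwartz's theorem applies, and the incommensurability of $M_1$ and $M_2$ forces $\pi_1(M_1)$ and $\pi_1(M_2)$ to fail to be quasi-isometric.

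There is no real obstacle: the work lies entirely in Theorem \ref{thm:non_commensurable} (established in the preceding section via the cusp-shape and hidden-symmetry analysis) and in the already quoted theorem of Schwartz. The corollary is purely a logical consequence, so the write-up should simply record the two invocations and the contrapositive step.
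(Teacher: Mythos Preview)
Your proposal is correct and matches the paper's approach exactly: the paper simply notes that Schwartz's theorem \cite[Theorem 1.1]{Sch} gives that cusped hyperbolic $3$-manifolds are commensurable if and only if their fundamental groups are quasi-isometric, and then reads off the corollary from Theorem~\ref{thm:non_commensurable}. There is nothing to add.
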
 

\textbf{Remark:} The work of Goodman--Heard--Hodgson \cite{GoHeHo} implies that two hyperbolic knot complements are commensurable if and only if there exist horoball neighborhoods (of each knot complement) that lift to isometric packings of $\mathbb{H}^3$. This could provide another method to prove that any pair of our pretzel knot complements that differ by a composition of mutations are incommensurable (assuming they are non-isometric): show that their corresponding horoball packings are non-isometric for \textit{any} possible horoball neighborhoods. We could again try to use our horoball packing coming from the cusp $C$ of $N_{2n+1}$ to analyze the horoball packings of each $\mathbb{S}^{3} \setminus K_{2n+1}^{\sigma}$. However, to conclude that two of our knot complements are incommensurable, we would need to vary our cusp neighborhoods rather than just work with the canonical choice. Also, this proof technique would not imply that $\mathbb{S}^{3} \setminus K_{2n+1}^{\sigma}$ and $\mathbb{S}^{3} \setminus K_{2m+1}^{\sigma}$ are incommensurable when $n \neq m$, and so, we do need Proposition \ref{prop:no_rigid} for the stronger statement that any two of our pretzel knot complements are incommensurable.


\section{Mutations and short geodesics coming from Dehn fillings}
\label{sec:mutations_sys}
In this section, we shall analyze the behavior of short geodesics in the set of knot complements $\left\{M_{2n+1}^{\sigma}\right\}$. If there is enough vertical twisting in each twist region, i.e., if each $q_{i}$ is sufficiently large, then we can easily figure out which geodesic are the shortest.  This analysis is possible by realizing our pretzel knot complements as Dehn surgeries along untwisted augmented link complements.  We shall also see that if each $q_{i}$ is sufficiently large, then the initial length spectrum is actually preserved under mutation, and so, we will be able to generate a large class of hyperbolic knot complements with both the same volume and the same initial length spectrum. Here, we also give an application to closed hyperbolic $3$-manifolds that come from Dehn filling $M_{2n+1}^{\sigma}$ along $K_{2n+1}^{\sigma}$. For each $n \in \mathbb{N}$, $n \geq 2$ these sets of closed manifolds will have the same volume and the same initial length spectrum.  We end this section by raising some questions about the effectiveness of geometric invariants of hyperbolic $3$-manifolds.

\subsection{Mutations of $K_{2n+1}$ with the same initial length spectrum}
\label{subsec:mutations_of_K}

Given the untwisted augmented link complement $N_{2n+1} = \mathbb{S}^{3} \setminus L_{2n+1}$, we  form $M_{2n+1} =\mathbb{S}^{3} \setminus K_{2n+1}$ by performing Dehn surgeries $(1, \frac{q_{i} - 1}{2})$ along $2n$ of the crossing circle cusps, and one Dehn surgery $(1 , \frac{q_{1}}{2})$  along the crossing circle cusp not enclosing a half-twist, i.e., 
\begin{center}
$M_{2n+1} = N_{2n+1} \left( (1, \frac{q_{1}}{2}), (1,  \frac{q_{2} - 1}{2}), \dots, (1,  \frac{q_{2n+1} - 1}{2}) \right)$.  
\end{center}
Similarly, any mutation $M^{\sigma}_{2n+1}$ is obtained by performing the same Dehn surgeries on $N_{2n+1}$, just with some of the surgery coefficients permuted. We now show that if each $q_{i}$ is sufficiently large, then the core geodesics in $M_{2n+1}$ are sufficiently short, and so, they are preserved under mutation.

\begin{thm}
\label{thm:sys_preserved}
Let $\left\{ \gamma_{i}^{\sigma} \right\}_{i=1}^{2n+1}$ be the $2n+1$ geodesics in $M_{2n+1}^{\sigma}$ that came from Dehn filling the crossing circles of $N_{2n+1}$.  For each $n \in \mathbb{N}$, there exists a constant $Q = Q(n) = \sqrt{(20.76)^{2}\frac{(2n+1)(4n)}{2n-1}-1}$, such that if each $q_{i} \geq Q$, then $\left\{ \gamma_{i}^{\sigma} \right\}_{i=1}^{2n+1}$ make up at least $2n+1$ of the shortest geodesics in their respective hyperbolic $3$-manifold and every $M_{2n+1}^{\sigma}$ has at least the same shortest $2n+1$ (complex) geodesic lengths.
\end{thm}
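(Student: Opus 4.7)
The plan is to invoke the second bullet of Corollary \ref{cor:syspreservednl}, whose hypotheses become checkable once Proposition \ref{prop:NL} is combined with the explicit lower bound $q_{i} \geq Q(n)$. Recall that each $M_{2n+1}^{\sigma}$ is obtained from $N_{2n+1}$ by Dehn filling all $2n+1$ crossing-circle cusps along slopes whose coefficients are a permutation (depending on $\sigma$) of the original slopes of $M_{2n+1}$. Because any mutation $\sigma_{a}$ in the generating collection $\{(S_{a},\sigma_{a})\}$ is along a Conway sphere, and a Conway sphere is a four-punctured sphere with $|\chi| = 2$ that is incompressible, $\partial$-incompressible, and hyperelliptic, each mutation fits the hypotheses of Corollary \ref{cor:syspreservednl}. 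The second bullet of that corollary, applied with the cusp count equal to $k = 2n+1$, requires $\widehat{L}(s_{i}) \geq 20.76\sqrt{2n+1}$ for every crossing-circle slope $s_{i}$.

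The first step is therefore to verify this normalized length bound. Proposition \ref{prop:NL} gives $\widehat{L}(s_{i}) \geq \sqrt{(2n-1)(1+q_{i}^{2})/(4n)}$, so the bound $\widehat{L}(s_{i}) \geq 20.76\sqrt{2n+1}$ is equivalent to
\[
q_{i}^{2} \geq (20.76)^{2}\,\frac{(2n+1)(4n)}{2n-1} - 1,
\]
which is exactly $q_{i} \geq Q(n)$. Thus the hypothesis of the theorem is tailored precisely so that the normalized length criterion of Corollary \ref{cor:syspreservednl} is satisfied for every slope filled to produce $M_{2n+1}^{\sigma}$ from $N_{2n+1}$, regardless of the permutation $\sigma$.

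The second step is to apply Corollary \ref{cor:syspreservednl} to each mutation in turn. Fixing a decomposition $\sigma = \sigma_{a_{k}} \circ \cdots \circ \sigma_{a_{1}}$, each intermediate manifold is itself obtained from $N_{2n+1}$ by Dehn filling along a permutation of the original slopes, so the normalized length bound is preserved at every stage. The second bullet of the corollary then guarantees that the mutation along $S_{a_{j}}$ preserves the $2n+1$ initial values of the complex length spectrum, and Corollary \ref{cor:disjointgeo}(2) (which is the engine behind that bullet) tells us that the $2n+1$ core geodesics $\gamma_{i}^{\sigma}$ satisfy $\ell(\gamma_{i}^{\sigma}) < 0.015$. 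Iterating across the decomposition yields that every $M_{2n+1}^{\sigma}$ shares the same $2n+1$ initial complex lengths with $M_{2n+1}$, and that these lengths are realized by the cores $\gamma_{i}^{\sigma}$.

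The only point that requires a brief additional remark rather than a routine calculation is the assertion that these cores are genuinely among the $2n+1$ \emph{shortest} geodesics in $M_{2n+1}^{\sigma}$. This is where the numerical choice of $0.015$ (as opposed to some larger short-geodesic threshold) matters: by Meyerhoff's work, the $3$-dimensional Margulis constant exceeds $0.104$, so each $\gamma_{i}^{\sigma}$ lies deep in the thin part and must be the core of a Margulis tube; any other closed geodesic has length at least $0.015$, so the cores fill out the first $2n+1$ slots of the initial length spectrum. I expect this Margulis-constant bookkeeping to be the only subtle step, since the rest of the argument is a direct combination of Proposition \ref{prop:NL} with the pre-packaged mutation result of Corollary \ref{cor:syspreservednl}.
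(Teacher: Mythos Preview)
Your argument is essentially the paper's: combine Proposition~\ref{prop:NL} with the second bullet of Corollary~\ref{cor:syspreservednl}, solve the resulting inequality for $q_i$ to obtain $Q(n)$, and iterate over the generating mutations. That is exactly what the paper does, and your exposition is if anything more careful about the inductive step across a word in the $\sigma_a$'s.

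One small comment on your final paragraph: the deduction ``any other closed geodesic has length at least $0.015$'' does not follow merely from Meyerhoff's bound on the Margulis constant. That bound tells you a geodesic of length below $0.104$ is the core of a Margulis tube, but it does not by itself rule out Margulis tubes other than the $2n+1$ filling tori. What actually makes this work is that the conclusion you want is already packaged into Corollary~\ref{cor:syspreservednl} (via Theorem~\ref{cor:syspreserved2} and Corollary~\ref{cor:disjointgeo}(2)): every geodesic of length below $0.015$ is preserved under mutation, the cores all lie in that range, and the bijection of Theorem~\ref{cor:syspreserved2} identifies the initial spectra. The paper accordingly omits your last paragraph and simply cites the corollary; you may do the same.
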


\begin{proof}
Given $M_{2n+1}$, we must show that the result holds for a mutation $\sigma_{a}$ along $S_{a}$, and the general result will follow by induction. By Proposition \ref{prop:NL}, we know that the normalized length of the $i^{th}$ filling slope satisfies $\widehat{L}\left(s_{i}\right) \geq  \sqrt{\frac{(2n-1)(1+q_{i}^{2})}{4n}}$. If  each $\widehat{L}(s_{i}) \geq 20.76\sqrt{2n+1}$, then Corollary \ref{cor:syspreservednl} tells us that $M$ and $M^{\sigma_{a}}$ have (at least) the same $2n+1$ shortest (complex) geodesic lengths, and (at least) a portion of the initial length spectrum is given by $\left\{\ell_{\mathbb{C}}(\gamma_{i})\right\}_{i=1}^{2n+1} = \left\{\ell_{\mathbb{C}}(\gamma_{i}^{\sigma_{a}})\right\}_{i=1}^{2n+1}$. Thus, we need to just solve  $\sqrt{\frac{(2n-1)(1+q_{i}^{2})}{4n}} \geq 20.76\sqrt{2n+1}$ for $q_{i}$ to determine $Q$.
\end{proof}

The following theorem comes from combining Theorem \ref{thm:sys_preserved}, Theorem \ref{thm:non_commensurable}, and Theorem \ref{thm:volume}, and requires all $q_{i}$ to be chosen sufficienty large. This theorem shows that there are large classes of geometrically similar pretzel knots -- they have non-isometric knot complements, but a large number of their geometric invariants are the same.

\begin{thm}
\label{thm:similarpretzels}
For each $n \in \mathbb{N}$, $n\geq 2$, there exist $\frac{(2n)!}{2}$ non-isometric hyperbolic pretzel knot complements, $\left\{M_{2n+1}^{\sigma}\right\}$, such that these manifolds:

\begin{itemize}
\item have the same $2n+1$ shortest geodesic (complex) lengths, 
\item are pairwise incommensurable,
\item have the same volume, and
\item $\left(\frac{2n-1}{2}\right)v_{\mathrm{oct}} \leq vol(M^{\sigma}_{2n+1})  \leq \left(4n+2\right)v_{\mathrm{oct}}$, where $v_{\mathrm{oct}} \left(\approx 3.6638\right)$ is the volume of a regular ideal octahedron. 

\end{itemize}
\end{thm}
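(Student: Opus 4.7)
The plan is to assemble Theorem \ref{thm:similarpretzels} directly from the three deeper results already proved in the paper, one bullet point at a time. Specifically, Theorem \ref{thm:volume} supplies the count $\frac{(2n)!}{2}$ of non-isometric mutant pretzel knot complements together with the volume equality and the stated volume bounds, Theorem \ref{thm:sys_preserved} supplies the agreement of the $2n+1$ shortest complex geodesic lengths, and Theorem \ref{thm:non_commensurable} supplies the pairwise incommensurability. The only real work is to arrange the parameters $q_1,\dots,q_{2n+1}$ so that the hypotheses of all three theorems hold at once.

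The first step is to fix $n\geq 2$ and choose the twist parameters. Take $q_1$ even, $q_2,\ldots,q_{2n+1}$ odd, all distinct and all greater than $6$, so that the construction of Section \ref{sec:RT_and_PK} yields the hyperbolic pretzel knot $K_{2n+1}=K\!\left(\tfrac{1}{q_1},\ldots,\tfrac{1}{q_{2n+1}}\right)$ along with its $(2n)!/2$ mutants $K_{2n+1}^\sigma$ coming from the Conway spheres $\{(S_a,\sigma_a)\}$. Next, require in addition that each $q_i\geq Q(n)=\sqrt{(20.76)^2\frac{(2n+1)(4n)}{2n-1}-1}$ from Theorem \ref{thm:sys_preserved}, and that each $q_i$ exceeds the (finite) threshold furnished by Theorem \ref{thm:non_commensurable}. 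All three conditions are simultaneously satisfiable because each amounts only to a lower bound on each $q_i$.

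With the $q_i$ so chosen, every mutant $K_{2n+1}^\sigma$ is obtained by the \emph{same} multi-slope Dehn filling on the untwisted augmented link complement $N_{2n+1}$, merely with the surgery coefficients permuted among the crossing-circle cusps. Theorem \ref{thm:volume} then immediately gives the first, third, and fourth bullets: there are $\frac{(2n)!}{2}$ non-isometric complements, they share a common volume, and the volume satisfies $\left(\tfrac{2n-1}{2}\right)v_{\mathrm{oct}}\leq \mathrm{vol}(M_{2n+1}^\sigma)\leq (4n+2)v_{\mathrm{oct}}$. Theorem \ref{thm:non_commensurable} applies to every $K_{2n+1}^\sigma$ because mutation only permutes the odd $q_i$ among themselves and keeps $q_1$ even, so each mutant still satisfies the hypotheses (all $q_i$ distinct, exactly one even, all sufficiently large); hence each $M_{2n+1}^\sigma$ is the unique knot complement in its commensurability class, giving pairwise incommensurability.

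Finally, Theorem \ref{thm:sys_preserved} gives the second bullet: the $2n+1$ core geodesics $\{\gamma_i^\sigma\}$ arising from the crossing-circle Dehn fillings are guaranteed to be among the shortest $2n+1$ geodesics of $M_{2n+1}^\sigma$, and their complex lengths are preserved by any single mutation along an $S_a$. Since mutations along different $S_a$'s can be applied in sequence and each one preserves these $2n+1$ complex lengths, induction on the number of mutations delivers that every $M_{2n+1}^\sigma$ has the same $2n+1$ shortest complex geodesic lengths as $M_{2n+1}$ itself. The only potential obstacle is checking that the three lower bounds on the $q_i$ are mutually compatible and that after a mutation the filling description still identifies the same $2n+1$ core curves as the shortest geodesics, but both are immediate: the first because they are all just ``$q_i$ large enough'' conditions, and the second because mutation permutes the filling slopes without changing the resulting short-geodesic data in $N_{2n+1}$. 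This completes the proposed proof.
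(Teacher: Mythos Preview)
Your proposal is correct and takes essentially the same approach as the paper, which simply states that the theorem ``comes from combining Theorem \ref{thm:sys_preserved}, Theorem \ref{thm:non_commensurable}, and Theorem \ref{thm:volume}, and requires all $q_{i}$ to be chosen sufficiently large.'' One small slip: it is not literally true that mutation ``only permutes the odd $q_i$ among themselves and keeps $q_1$ even'' (the mutation $\sigma_1$ swaps $q_1$ with $q_2$), but this is harmless since each mutant is still a pretzel knot with the same multiset of twist parameters---exactly one even, all distinct, all sufficiently large---so Theorem \ref{thm:non_commensurable} applies after relabeling.
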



\subsection{Closed hyperbolic $3$-manifolds with the same volume and initial length spectrum}
\label{subsec:closed}

Let $M = \mathbb{S}^{3} \setminus K$ and let $M(p,q)$ denote the closed manifold obtained by performing a $(p,q)$-Dehn surgery along the knot $K$.  In \cite[Theorem 3]{Mi}, we show that for each $n \in \mathbb{N}$, $n \geq 2$, and for $(p,q)$ sufficiently large, $M_{2n+1}^{\sigma}(p,q)$ and $M_{2n+1}^{\sigma'}(p,q)$ have the same volume and are non-isometric closed hyperbolic $3$-manifolds, whenever $M_{2n+1}^{\sigma}$ and $M_{2n+1}^{\sigma'}$ are non-isometric.  This proof relies on another result of Ruberman's \cite[Theorem 5.5]{Ru} which shows that corresponding Dehn surgeries on a hyperbolic knot $K$ and its fellow mutant $K^{\mu}$ will often result in manifolds with the same volume.  Specifically, this happens when a Conway sphere and its mutation are \textit{unlinked}.

\begin{defn}[Unlinked]
\label{def:Unlinked}
Let $K$ be a knot in $S^{3}$ admitting a Conway sphere $S$.  Observe that a specific choice of a mutation $\mu$ gives a pair of $S^{0}$'s on the knot such that each $S^{0}$ is preserved by $\mu$. We say that $\mu$ and $S$ are \textit{unlinked} if these $S^{0}$'s are unlinked on $K$.  
\end{defn}

Being unlinked allows one to tube together the boundary components of a Conway sphere that are interchanged by $\mu$ to create a closed surface of genus two, which we call $S'$. $S'$ is also a hyperelliptic surface, and its involution is the same as the involution $\mu$ of our Conway sphere. Dehn surgeries on $\mathbb{S}^{3} \setminus K$ and its mutant $\mathbb{S}^{3} \setminus K^{\mu}$ differ by mutating along this closed surface. Thus, Ruberman's result for preserving volume will apply to these closed manifolds.

Combining our work in \cite{Mi} with Corollary \ref{cor:syspreserved2} gives the following.

\begin{thm}
\label{thm:closedmanifolds}
For each $n \in \mathbb{N}$, $n \geq 2$, and any $(p,q)$ sufficiently large, there exist $\frac{(2n-1)!}{2}$ non-isometric closed hyperbolic $3$-manifolds $\left\{M_{2n+1}^{\sigma}(p,q)\right\}$ such that these manifolds:
\begin{itemize}
\item have the same $2n+2$ shortest (complex) geodesic lengths,
\item have the same volumes, and
\item $vol(M^{\sigma}_{2n+1}(p,q)) < (4n+2)v_{oct}$.
\end{itemize}

\end{thm}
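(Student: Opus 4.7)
My plan is to combine the closed-manifold counting and volume statement from the author's earlier paper \cite{Mi} with the initial length spectrum machinery of Theorem \ref{cor:syspreserved2}, applied to closed genus two surfaces obtained by tubing the Conway spheres $S_a$.

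First, I would invoke \cite[Theorem 3]{Mi} directly to obtain, for $(p,q)$ sufficiently large, the $\frac{(2n-1)!}{2}$ pairwise non-isometric closed hyperbolic $3$-manifolds $\{M_{2n+1}^{\sigma}(p,q)\}$ with the common volume property. The upper bound $\mathrm{vol}(M_{2n+1}^{\sigma}(p,q)) < (4n+2)v_{\mathrm{oct}}$ then follows from the strict volume decrease under Dehn filling (Thurston) combined with the upper bound $\mathrm{vol}(M_{2n+1}^{\sigma}) \leq (4n+2)v_{\mathrm{oct}}$ from Theorem \ref{thm:similarpretzels}. At this point the volume and non-isometry bullets are already in hand; the remaining task is the initial length spectrum assertion.

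For the length spectrum, I would note that each mutant $M_{2n+1}^{\sigma}(p,q)$ arises from $N_{2n+1}$ by Dehn filling all $2n+1$ crossing circles and then performing a $(p,q)$-filling on the remaining knot cusp, giving exactly $2n+2$ core geodesics $\gamma_1^{\sigma},\dots,\gamma_{2n+1}^{\sigma},\gamma_{(p,q)}^{\sigma}$. Proposition \ref{prop:NL} bounds the normalized lengths of the crossing-circle slopes from below by $\sqrt{(2n-1)(1+q_i^2)/(4n)}$, and by choosing each $q_i$ and the slope $(p,q)$ sufficiently large, I can force the normalized lengths of all $2n+2$ filling slopes to exceed $20.76\sqrt{2n+2}$, so that Corollary \ref{cor:disjointgeo} applies to put each core geodesic shorter than $0.015$ and isotopable disjoint from any incompressible, $\partial$-incompressible surface with $|\chi|\leq 2$. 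Because the unlinked mutations $(S_a,\sigma_a)$ used in \cite{Mi} admit a tubing of their two interchanged boundary circles along arcs of $K_{2n+1}$, each such $S_a$ extends in $M_{2n+1}^{\sigma}(p,q)$ to a closed genus two hyperelliptic surface $S'_a$, and mutation of the closed manifold along $S'_a$ realizes the permutation $\sigma_a$ on Dehn-filling coefficients (this is exactly the setup of \cite[Theorem 5.5]{Ru}).

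Next I would apply Theorem \ref{cor:syspreserved2} to the pair $(M_{2n+1}^{\sigma}(p,q), S'_a)$: since $S'_a$ is hyperelliptic and incompressible (inheriting incompressibility from $S_a$ in $M_{2n+1}^{\sigma}$ after verifying no filling slope bounds a compressing disk for $S'_a$, which is automatic for $(p,q)$ large), and since all $2n+2$ short core geodesics have length below $0.015$, the bijection $G_L(M) \leftrightarrow G_L(M^{\mu})$ of Theorem \ref{cor:syspreserved2} shows that these $2n+2$ complex geodesic lengths are preserved by mutation along $S'_a$. Iterating across the sequence of unlinked mutations that connect any two of the $\frac{(2n-1)!}{2}$ manifolds then yields the common initial length spectrum.

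The main obstacle I anticipate is the incompressibility and almost least area arguments for the tubed surface $S'_a$ in the closed manifold $M_{2n+1}^{\sigma}(p,q)$. Theorem \ref{cor:syspreserved2} (via Theorem \ref{thm:LAsurfaces} and Corollary \ref{cor:disjointgeo}) requires $S'_a$ to be isotopable into almost least area form and that the $2n+2$ short geodesics be isotopable off of it. In the cusped manifold $M_{2n+1}^{\sigma}$ the Conway sphere $S_a$ is manifestly incompressible and $\partial$-incompressible, but after filling the knot cusp the tubing arcs lie on a compressible part of the cusp torus, and one must rule out new compressing disks coming from the surgery solid torus. I would handle this by arguing, following Ruberman's techniques in \cite{Ru}, that for $(p,q)$ of large normalized length the cone-deformation control on the tube of $\gamma_{(p,q)}^{\sigma}$ keeps a least area representative of $S'_a$ disjoint from that solid torus; this simultaneously ensures $\gamma_{(p,q)}^{\sigma}$ can be isotoped off of $S'_a$ (Corollary \ref{cor:disjointgeo}, applied with $|\chi(S'_a)|=2$), after which the argument of Theorem \ref{cor:syspreserved2} goes through verbatim in the closed setting.
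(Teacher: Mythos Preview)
Your proposal follows essentially the same route as the paper: tube the unlinked Conway spheres to closed genus-two hyperelliptic surfaces in the filled manifold, invoke Ruberman for volume, and apply the short-geodesic machinery (Corollary~\ref{cor:syspreservednl}/Theorem~\ref{cor:syspreserved2}) with $2n+2$ core geodesics of length below $0.015$. One clarification worth making explicit: you cite \cite[Theorem 3]{Mi} directly for the count $\frac{(2n-1)!}{2}$, but the construction in the present paper differs from \cite{Mi} in that exactly one twist region has an even number of crossings, which forces $(S_{1},\sigma_{1})$ to be \emph{linked}; the paper therefore mutates only along $\{(S_{a},\sigma_{a})\}_{a=2}^{2n}$, and this is precisely why the count drops from $\frac{(2n)!}{2}$ to $\frac{(2n)!}{2(2n)}=\frac{(2n-1)!}{2}$. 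Your proposal is otherwise more careful than the paper on one point: the paper simply asserts that Corollary~\ref{cor:syspreservednl} applies to the closed manifolds without discussing incompressibility of the tubed surface $S'_{a}$, whereas you correctly flag this as something to verify (and your cone-deformation/large-$(p,q)$ argument is the right idea).
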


\begin{proof}
In \cite{Mi}, we constructed our $K_{2n+1}$ so that all Conway spheres in $\left\{ (S_{a}, \sigma_{a}) \right\}_{a=1}^{2n}$ are unlinked.  However, here we have slightly modified this construction of each $K_{2n+1}$.  Specifically, we now have one twist region with an even number of twists in $K_{2n+1}$.  As a result, $(S_{1}, \sigma_{1})$ is not unlinked. Thus, we will only mutate along the other Conway spheres: $\left\{ (S_{a}, \sigma_{a}) \right\}_{a=2}^{2n}$.  These combinations of mutations create $\frac{(2n)!}{2(2n)} = \frac{(2n-1)!}{2}$ non-isometric, hyperbolic pretzel knots; see \cite[Theorem $2$]{Mi} for more details.  

Let $\sigma$ and $\sigma'$ be any combination of mutations along our unlinked Conway spheres resulting in non-isometric knot complements.  Now, $M_{2n+1}^{\sigma}(p,q)$ and $M_{2n+1}^{\sigma'}(p,q)$ have the same volume by Ruberman's work.  In \cite[Theorem $3$]{Mi}, we show that $M_{2n+1}^{\sigma}(p,q)$ and $M_{2n+1}^{\sigma'}(p,q)$ are non-isometric by choosing $(p,q)$ sufficiently large so that the core geodesics resulting from this Dehn filling are the systoles of their respective manifolds.  This comes from the work of Neumann--Zagier \cite{NZ}. So, for $(p,q)$ sufficiently large, any $M_{2n+1}^{\sigma}(p,q)$ will have at least $2n+2$ closed geodesics shorter than a constant $L< 0.015$. $2n+1$ of these geodesics come from Dehn filling our crossing circles of $L_{2n+1}$, and the systole comes from then Dehn filling the knot component. We can apply Corollary \ref{cor:syspreservednl} to these closed manifolds to show that they have the same $2n+2$ shortest geodesic lengths.  The upper bound on volume follows from the proof of \cite[Theorem $3$]{Mi}.
\end{proof}

\subsection{Closing Remarks}
\label{subsec:CR}
The fact that the manifolds $\left\{M_{2n+1}^{\sigma}\right\}$ are constructed by mutating knot complements that are pairwise incommensurable sharply contrasts any of the known constructions for building large classes of hyperbolic $3$-manifolds that are iso-length spectral.  However, we only know that our mutant knot complements have the same initial length spectra. Based on experimental evidence from SnapPy, the author doubts that any of these manifolds actually are iso-length spectral. It would be interesting to know if this mutation process could be used to produce iso-length spectral hyperbolic $3$-manifolds that are incommensurable. 

In addition, there is a general recipe for our type of construction and we did not necessarily need to use pretzel knots.  In order to construct a large number of non-isometric hyperbolic manifolds with the same volume and the same initial length spectrum, you need the following key ingredients.

\begin{itemize}
\item An initial hyperbolic $3$-manifold $M$ with:
\begin{itemize}
\item a large number of hyperelliptic surfaces in $M$ to mutate along to create the set of manifolds $\left\{M^{\sigma}\right\}$, and
\item a way to determine your shortest geodesics in $M$ and make sure they are sufficiently short, i.e., realize them as the cores of sufficiently long Dehn fillings.
\end{itemize}
\item A simple method to determine how much double counting you are doing, i.e., a method to determine if any $M^{\sigma}$ and $M^{\sigma'}$ are isometric or not.
\end{itemize}

Given this recipe, you want to maximize the number of hyperelliptic surfaces in $M$ to mutate along and maximize the number of sufficiently short geodesics, while minimizing the double counting. It would be interesting to examine how well we did with maximizing and minimizing these parameters. Such an examination leads us to consider the function $N(v,s)$, which counts the number of hyperbolic $3$-manifolds with same volume $v$ and the same $s$ shortest geodesic lengths.  We can also consider the restriction of this counting function to specific classes of hyperbolic $3$-manifolds. Let $N_{K}(v,s)$ denote the restriction of $N(v,s)$ to hyperbolic knot complements and $N_{Cl}(v,s)$ denote the restriction of $N(v,s)$ to closed hyperbolic $3$-manifolds. An immediate corollary of Theorem \ref{thm:similarpretzels} and Theorem \ref{thm:closedmanifolds} gives the following lower bounds on the growth rates of $N_{K}(v,s)$ and $N_{Cl}(v,s)$ as functions of $v$. The proof of this corollary is the same as the proof of \cite[Theorem $1$]{Mi}, except we can now take the short geodesic lengths into account. 

\begin{cor}
\label{cor:growth}
There are sequences $\left\{(v_{n}, s_{n})\right\}$ and $\left\{(x_{n}, t_{n})\right\}$ with $(v_{n}, s_{n}),(x_{n}, t_{n}) \rightarrow (\infty, \infty)$ such that 
\begin{center}
$N_{K}((v_{n}, s_{n})) \geq (v_{n})^{(\frac{v_{n}}{8})}$ and $N_{Cl}((x_{n}, t_{n})) \geq \left(x_{n} \right)^{\left(\frac{x_{n}}{8} \right)}$
\end{center}
for all $n \gg 0$. 
\end{cor}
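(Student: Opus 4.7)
The plan is to apply Theorems \ref{thm:similarpretzels} and \ref{thm:closedmanifolds} directly, with natural sequences of parameters, and then verify the desired bounds via Stirling's approximation. No new geometry is required; this is essentially a bookkeeping exercise once those two theorems are in hand.

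For the cusped case, I would define $v_n := vol(M^{\sigma}_{2n+1})$ (well-defined independent of $\sigma$) and $s_n := 2n+1$. Theorem \ref{thm:similarpretzels} immediately yields $(2n)!/2$ pairwise non-isometric pretzel knot complements contributing to $N_K(v_n, s_n)$, and the lower bound $v_n \geq \frac{2n-1}{2} v_{\mathrm{oct}}$ forces $(v_n, s_n) \to (\infty, \infty)$. It then remains to establish $(2n)!/2 \geq v_n^{v_n/8}$ for $n \gg 0$. Using the upper bound $v_n \leq (4n+2) v_{\mathrm{oct}}$ to dominate the right-hand side reduces this to an inequality purely in $n$. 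Taking logarithms, Stirling gives $\log((2n)!/2) = 2n \log n + O(n)$ on the left, while the right is at most $((4n+2)v_{\mathrm{oct}}/8) \log((4n+2)v_{\mathrm{oct}}) = (v_{\mathrm{oct}}/2)\, n \log n + O(n) \approx 1.832\, n \log n + O(n)$. The crucial numerical point is $2 > v_{\mathrm{oct}}/2$, which leaves a leading-order slack of $2 - v_{\mathrm{oct}}/2 \approx 0.168$, more than enough to absorb the $O(n)$ and $O(\log n)$ error terms for all sufficiently large $n$. This is precisely why the exponent $1/8$ is chosen in the statement: any $1/c$ with $c > 2 v_{\mathrm{oct}} \approx 7.33$ would suffice, and $8$ is the nearest clean constant.

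For the closed case, the argument is entirely parallel via Theorem \ref{thm:closedmanifolds}. Set $x_n := vol(M^{\sigma}_{2n+1}(p_n, q_n))$ and $t_n := 2n+2$, where $(p_n, q_n)$ is any sequence of surgery slopes large enough to meet the hypotheses of that theorem; then $N_{Cl}(x_n, t_n) \geq (2n-1)!/2$ while still $x_n \leq (4n+2) v_{\mathrm{oct}}$. The identical Stirling comparison, $\log((2n-1)!/2) = 2n \log n + O(n)$ against $(v_{\mathrm{oct}}/2)\, n \log n + O(n)$, closes the argument. The only mild subtlety in either case -- if one can call it an obstacle -- is ensuring the leading-coefficient slack $2 - v_{\mathrm{oct}}/2$ is large enough to dominate all lower-order Stirling errors, which is exactly what the qualifier \emph{for all $n \gg 0$} in the statement allows.
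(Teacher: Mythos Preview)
Your proposal is correct and takes essentially the same approach as the paper, which simply defers to \cite[Theorem~1]{Mi} and notes that the short-geodesic count can now be tracked alongside the volume. The Stirling comparison you outline, with the key numerical check $2 > v_{\mathrm{oct}}/2$, is exactly the content of that referenced argument; the only point you might make more explicit in the closed case is that $x_n \to \infty$ follows because the Dehn-filled volume can be made arbitrarily close to $vol(M_{2n+1}^{\sigma}) \geq \frac{2n-1}{2}v_{\mathrm{oct}}$ by taking $(p_n,q_n)$ large.
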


This corollary tells us that the counting function $N((v,s))$ grows at least factorially fast with $v$, and immediately raises some questions. 
\begin{question}
Can a Sunada-type construction or an arithmetic method be applied to also show $N((v,s))$ grows at least factorially fast with $v$? Also, are there sequences $\left\{(v_{n}, s_{n})\right\}$ with $v_{n} \rightarrow \infty$ such that $N((v_{n}, s_{n}))$ grows faster than factorially with $v_{n}$? 
\end{question}
It would be interesting to find a construction realizing a growth rate faster than the one given in Corollary \ref{cor:growth} or show that a factorial growth rate is actually the best we can do.


\bibliographystyle{hamsplain}
\bibliography{systolepaperbiblio}

\end{document}